\newtheorem{problem}{Problem}[section]
\newtheorem{definition}[problem]{Definition}
\newtheorem{lemma}[problem]{Lemma}
\newtheorem{theorem}[problem]{Theorem}
\newtheorem{corollary}[problem]{Corollary}
\newtheorem{conjecture}[problem]{Conjecture}
\title{The $p$-adic RH For Exponential Sums}
\author{Chunlei Liu and Chuanze Niu}
\address{Chunlei Liu, School of Mathematical Sciences, Shanghai Jiao Tong University, Shanghai 200240\\ email: clliu@sjtu.edu.cn}
\address{Chuanze Niu, School of Mathematical Sciences, Liaocheng University, Liaocheng 252059\\ email: niuchuanze@lcu.edu.cn}
\thanks{The second author is supported by Shandong Prov. NSF  No. ZR2019BA011  and by NNSFC No. 11401285}
\begin{document}
\maketitle
{\bf Abstract. }
Let $(n,d;p)$ be a triple of natural numbers with $p$ being a prime number prime to $d$.
On the geometric side, that triple gives birth to the geometric invariant ${\rm HP}(n,d)$, which is the Hodge polygon of the simplex in ${\mathbb R}^n$ with vertices
  $$(0,\cdots,0),(d,0,\cdots,0), (0,d,0,\cdots,0),\cdots,(0,\cdots,0,d).$$
On the arithmetic side, the triple gives birth to the arithmetic invariant ${\rm GNP}(n,d;p)$, which is the generic Newton polygon of the space of polynomials in $n$ variables  of degree $d$ with coefficients in $\overline{{\mathbb F}}_p$. By a result of Sperber, if $p$ is sufficiently large, then
$${\rm GNP}(n,d;p)\geq{\rm HP}(n,d).$$
It is interesting to find a simple geometrically defined polygon that coincides with the generic Newton polygon ${\rm GNP}(n,d;p)$ for sufficiently large $p$.
If $p\equiv1 ({\rm mod}\ d)$, one needs no more efforts, because in this case it coincides with Hodge polygon.
If $n=1$, the problem was solved by  by Zhu [2003; 2004], and was simplified by
Blache and F\'{e}rard [2007].
If $n>1$ and $p\not\equiv1 ({\rm mod}\ d)$, the problem remains open. In present paper the authors have solved that problem when $n=2$, $d$ is even, and $p\equiv-1 ({\rm mod}\ d)$. In the general case, the newly defined geometric polygon is proved to lie between the generic Newton polygon and Hodge polygon.\\
{\it Key words}: Newton polygon, exponential sum, finite field\\
{\it MSC2010}: 11T23, 14E95

\tableofcontents
\section{Introduction}
\paragraph{}
Our starting ground object is the triple $(n,d;p)$ of natural numbers with $p$ being a prime number prime to $d$.
That ground triple, on the geometric side, gives rise to the simplex ${\rm Simp}(n,d)$, whose vertices are the points in ${\mathbb R}^n$:
  $$(0,\cdots,0),(d,0,\cdots,0), (0,d,0,\cdots,0),\cdots,(0,\cdots,0,d).$$
On the arithmetic side, the triple gives rise to
the space $\overline{{\mathbb F}}_p[x_1,\cdots,x_n]_{(d)}$, which is the set of degree-$d$ polynomials in $\overline{{\mathbb F}}_p[x_1,\cdots,x_n]$.

From the simplex ${\rm Simp}(n,d)$ mathematicians constructed the geometric invariant ${\rm HP}(n,d)$, which is called Hodge polygon of ${\rm Simp}(n,d)$.
And, from the space $\overline{{\mathbb F}}_p[x_1,x_2,\cdots,x_n]_{(d)}$  mathematicians constructed the arithmetic invariant ${\rm GNP}(n,d;p)$, which is called the generic Newton polygon of $\overline{{\mathbb F}}_p[x_1,x_2,\cdots,x_n]_{(d)}$.
By a result of Sperber, if $p$ is sufficiently large, then
$${\rm GNP}(n,d;p)\geq{\rm HP}(n,d).$$
It is interesting to find a simple geometrically defined polygon that coincides with the generic Newton polygon ${\rm GNP}(n,d;p)$ for sufficiently large $p$.
If $p\equiv1 ({\rm mod}\ d)$, one needs no more efforts, because in this case it coincides with Hodge polygon.
If $n=1$, the problem was solved by  by Zhu [2003; 2004], and was simplified by
Blache and F\'{e}rard [2007].
If $n>1$ and $p\not\equiv1 ({\rm mod}\ d)$, the problem remains open. In present paper the authors have solved that problem when $n=2$, $d$ is even, and $p\equiv-1 ({\rm mod}\ d)$. In the general case, the newly defined geometric polygon is proved to lie between the generic Newton polygon and Hodge polygon.
\subsection{Hodge numbers and Hodge polygon}
\begin{definition}[Hodge numbers] The Hodge numbers of ${\rm Simp}(n,d)$ are the numbers
                       $$h_j:=\#\{u\in(0,d)^n\cap{\mathbb Z}^n\mid \sum_{i=1}^nu_i=j\},\ j=0,1,\cdots,nd.$$

                       \end{definition}
Hodge numbers are symmetric in the sense that
$$h_{nd-j}=h_j,\ j=0,1,\cdots,nd.
$$
\begin{definition}[Hodge polygon]
The Hodge polygon ${\rm HP}(n,d)$ is the concave polygon on the right half plane with initial point $(0,0)$ whose slope distribution is listed in the following table

\begin{tabular}{|c|c|c|}
                                                      \hline

                                                      slope & multiplicity&variation\\
                                                      \hline
                                                      $\frac{j}{d}$ & $h_j$ &$j=0,1,\cdots,nd$\\
                                                      \hline
                                                    \end{tabular}\end{definition}
Hodge polygon is symmetric in the sense that
 the multiplicity of slope $\frac{j}{d}$ is equal to the multiplicity of slope $n-\frac{j}{d}$.
We can show that it has  end point
$(d-1)^n\cdot(1,\frac{n}{2})$.
\begin{definition}
An integer in the interval $[0,(d-1)^n]$ is called a Hodge vertex if it of the form
$\sum_{j=0}^ih_j$.\end{definition}
\subsection{Frobenius Numbers and Frobenius Polygon}
\paragraph{}
Frobenius numbers
$$h_{j,\epsilon},\ j=0,1,\cdots,nd,\ \epsilon=0,1$$
are to be defined in the next section.
They come from decompositions of Hodge numbers in the sense that
$$h_j=h_{j,0}+h_{j,1}.$$
They are trivial in trivial case in the sense that
$$h_{j,1}=0 \text{ if }(p-1)j\equiv0({\rm mod }\ d).$$
They are nonnegative in the sense that $$h_{j,\epsilon}\geq0.$$
Finally, they are symmetric in the sense that
$$h_{nd-j,1-\epsilon}=h_{j,\epsilon}\text{ if }(p-1)j\not\equiv0({\rm mod }\ d).$$
\begin{definition}[Frobenius slopes]
Frobenius slopes are the numbers:
$$\varpi_{j,\epsilon}:=\frac{1}{p-1}(\lceil\frac{(p-1)j}{d}\rceil-\epsilon),\ j=0,1,\cdots,nd,\ \epsilon=0,1.
$$\end{definition}
Frobenius slopes are  trivial in trivial case
in the sense that
$$\varpi_{j,0}=\frac{j}{d} \text{ if }(p-1)j\equiv0({\rm mod }\ d).$$
They are symmetric in the sense that
$$ \varpi_{nd-j,1-\epsilon}=n-\varpi_{j,\epsilon}\text{ if }(p-1)j\not\equiv0({\rm mod }\ d).$$
\begin{definition} The Frobenius polygon ${\rm FP}(n,d;p)$ is the concave polygon on the right half plane with initial point $(0,0)$ whose slope distribution is listed the the following table

\begin{tabular}{|c|c|c|}
                                                      \hline
                                                      slope & multiplicity&variation \\
                                                      \hline
                                                      $\varpi_{j,\epsilon}$ & $h_{j,\epsilon}$ &$j=0,1,\cdots,nd$ and $\epsilon=0,1$\\
                                                      \hline
                                                    \end{tabular}\end{definition}

Frobenius polygon has end point $(d-1)^n\cdot(1,\frac{n}{2})$.
It is  trivial in trivial case in the sense that
$${\rm FP}(n,d;p)={\rm HP}(n,d)\text{ if }p\equiv1({\rm mod }\ d).$$
It is asymptotically trivial
in the sense that
$$\lim_{p\rightarrow\infty}{\rm FP}(n,d;p)={\rm HP}(n,d).$$
It is symmetric in the sense that
multiplicity of slope $\varpi_{j,\epsilon}$ is equal to the multiplicity of slope $n-\varpi_{j,\epsilon}$.
In the third section we prove
that
$${\rm FP}(n,d;p)\geq{\rm HP}(n,d).$$
\begin{definition}
An integer in the interval $[0,(d-1)^n]$ is called a Frobenius vertex if it of the form
$\sum_{2j-\epsilon\leq 2i-\iota}h_{j,\epsilon}$.\end{definition}
\subsection{Discriminant and Smoothness}
\paragraph{}
To define the generic Newton polygon, we only need the family of polynomials whose leading form is smooth in the following sense.
\begin{definition}Let
$f(x)\in\overline{{\mathbb F}}_p[x_1,x_2,\cdots, x_n]_{(d)}$ be a homogenous polynomial.
If
 the system
 $$\left\{
     \begin{array}{ll}
       x_1\frac{\partial}{\partial x_1}f(x)=0& \hbox{} \\
       x_2\frac{\partial}{\partial x_2}f(x)=0& \hbox{}  \\
       \vdots & \hbox{} \\
       x_n\frac{\partial}{\partial x_n}f(x)=0& \hbox{}
     \end{array}
   \right.
 $$
has no nontrivial common zeros in $(\overline{{\mathbb F}}_p)^n$, then $f$ is said to be smooth.\end{definition}
\begin{lemma}[Gelfand et al.]There exists a nonzero polynomial
$${\rm Disc}(n,d)\in{\mathbb Z}[a_w\mid w\in{\mathbb N}^n,\ w_1+w_2+\cdots+w_n=d]$$
such that if $f(x)\in\overline{{\mathbb F}}_p[x_1,x_2,\cdots, x_n]_{(d)}$
is a homogenous polynomial which satisfies ${\rm Disc}(n,d)(f)\neq0$, then $f$ is smooth.\end{lemma}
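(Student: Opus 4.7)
The plan is to realize ${\rm Disc}(n,d)$ as a multi-polynomial resultant in the coefficients of $f$, and then check nontriviality by exhibiting a single smooth form.

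First, write $f(x)=\sum_{|w|=d} a_w x^w$ and set $g_i(x):=x_i\frac{\partial f}{\partial x_i}=\sum_{|w|=d} w_i\,a_w\,x^w$. Each $g_i$ is homogeneous of degree $d$ in $x_1,\ldots,x_n$, and every coefficient of every $g_i$ is a $\mathbb{Z}$-linear function of the $a_w$. In other words, the assignment $f\mapsto(g_1,\ldots,g_n)$ is a $\mathbb{Z}$-linear map from the affine space of degree-$d$ forms to the affine space of $n$-tuples of degree-$d$ forms.

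Next, invoke the classical Macaulay multi-polynomial resultant: there is a polynomial ${\rm Res}_{d,\ldots,d}$ with integer coefficients in the coefficients of $n$ homogeneous polynomials of degree $d$ in $n$ variables, whose vanishing over any algebraically closed field is equivalent to the existence of a nontrivial common zero of the $n$ forms (see Gelfand--Kapranov--Zelevinsky). Composing this resultant with the linear substitution above defines
\[
{\rm Disc}(n,d)\;:=\;{\rm Res}_{d,\ldots,d}\bigl(g_1,\ldots,g_n\bigr)\;\in\;\mathbb{Z}[a_w\mid |w|=d],
\]
and by construction ${\rm Disc}(n,d)(f)\neq 0$ forces the system $\{x_i\partial_i f=0\}_{i=1}^n$ to have only the trivial common zero in $\overline{\mathbb{F}}_p^n$; that is, $f$ is smooth.

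It remains only to verify that ${\rm Disc}(n,d)$ is not the zero polynomial, for which it suffices to produce a single smooth $f$. Take the Fermat form $f_0=x_1^d+\cdots+x_n^d$; then $x_i\partial_i f_0 = d\,x_i^d$, and since $p\nmid d$ the only common zero is the origin, so $f_0$ is smooth and ${\rm Disc}(n,d)(f_0)\neq 0$. The only non-routine ingredient is the existence and basic properties of the Macaulay resultant, which is classical; there is no real obstacle beyond citing it.
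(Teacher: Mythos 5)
Your proposal is correct and is essentially the route the paper intends: the paper gives no argument of its own, simply citing Gelfand--Kapranov--Zelevinsky, and your construction of ${\rm Disc}(n,d)$ as the Macaulay resultant of the forms $x_i\frac{\partial f}{\partial x_i}$ (an integer polynomial in the $a_w$ via the linear substitution) is exactly the classical object behind that citation. Your nontriviality check with the Fermat form, using $p\nmid d$ so that the resultant does not vanish modulo $p$, is the right verification and makes the statement non-vacuous over $\overline{\mathbb{F}}_p$.
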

\subsection{Exponential Sums and Generic Newton Polygon}
\begin{definition}Let
$f(x)\in\overline{{\mathbb F}}_p[x_1,x_2,\cdots, x_n]_{(d)}$, and $q$ a power of $p$ such that $\mathbb{F}_q$ contains the coefficients of $f$. The exponential sums associated to $(f,q)$
are the sums
$$S_{f,q}(k)=\sum\limits_{x\in({\mathbb F}_{q^k})^n}
\zeta_p^{{\rm
Tr}_{\mathbb{F}_{q^k}/\mathbb{F}_p}(f(x))},\ k=1,2,\cdots,$$
where $\zeta_p$ is a primitive $p$-th root of unity.\end{definition}
\begin{definition}Let
$f(x)\in\overline{{\mathbb F}}_p[x_1,x_2,\cdots, x_n]_{(d)}$, and $q$ a power of $p$ such that $\mathbb{F}_q$ contains the coefficients of $f$. The $L$-function
associated to $(f,q)$
is the function
$$L_{f,q}(t)=\exp\left(\sum\limits_{k=1}^{\infty}S_{f,q}(k)\frac{t^k}{k}\right).$$\end{definition}

\begin{theorem}[Deligne]Let
$f(x)\in\overline{{\mathbb F}}_p[x_1,x_2,\cdots, x_n]_{(d)}$, and $q$ a power of $p$ such that $\mathbb{F}_q$ contains the coefficients of $f$. If the leading form of $f$ is smooth, then
\begin{itemize}
  \item $L_{f,q}(t)^{(-1)^{n-1}}$ is a polynomial of degree $(d-1)^n$.
  \item $L_{f,q}(t)^{(-1)^{n-1}}$ is equal to $(q^{\frac{n}{2}}t)^{(d-1)^n}L_{-f,q}(\frac{1}{q^n t})^{(-1)^{n-1}}$ up to a root of unity.
   \item All reciprocal zeros of $L_{f,q}(t)^{(-1)^{n-1}}$ have absolute value $\sqrt{q^n}$.
\end{itemize}
\end{theorem}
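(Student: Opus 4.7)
The plan is to follow Deligne's $\ell$-adic cohomological approach. Fix an auxiliary prime $\ell\neq p$ and a nontrivial additive character $\psi:\mathbb{F}_p\to\overline{\mathbb{Q}}_\ell^{\,*}$ sending $1$ to $\zeta_p$, giving rise to the Artin-Schreier sheaf $\mathcal{L}_\psi$ on $\mathbb{A}^1_{\mathbb{F}_q}$. Setting $\mathcal{F}:=f^*\mathcal{L}_\psi$ on $\mathbb{A}^n_{\mathbb{F}_q}$, the Grothendieck-Lefschetz trace formula gives
$$L_{f,q}(t)=\prod_{i=0}^{2n}\det\bigl(1-Ft\mid H^i_c(\mathbb{A}^n_{\overline{\mathbb{F}}_q},\mathcal{F})\bigr)^{(-1)^{i+1}}.$$
The first goal is to reduce this alternating product to a single factor in middle degree $i=n$.

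The crux of the first bullet is therefore the vanishing $H^i_c(\mathbb{A}^n,\mathcal{F})=0$ for $i\neq n$ under the smooth leading-form hypothesis. I would argue via the standard compactification: embed $\mathbb{A}^n\hookrightarrow\mathbb{P}^n$ with boundary the hyperplane $H_\infty$ cut out by $x_0=0$. The smoothness of the leading form $f_d$ means that the intersection of $\{f_d=0\}$ with $H_\infty$ is a smooth projective hypersurface, so the pair is ``nondegenerate at infinity.'' One then shows that the Swan conductor of $\mathcal{F}$ along $H_\infty$ is computable from $f_d$, and an Artin/weak-Lefschetz type vanishing theorem for affine varieties with an Artin-Schreier coefficient sheaf forces concentration in degree $n$. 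The degree $(d-1)^n$ then follows from the Euler characteristic identity $\chi_c(\mathbb{A}^n,\mathcal{F})=(-1)^n(d-1)^n$, which can be read off from the Grothendieck-Ogg-Shafarevich formula with the Swan conductor just computed, or equivalently from the Bombieri-Adolphson-Sperber polytope formula applied to the Newton polytope $\mathrm{Simp}(n,d)$.

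For the second bullet, I would invoke Poincaré duality: $H^n_c(\mathbb{A}^n,\mathcal{F})$ is dual to $H^n(\mathbb{A}^n,\mathcal{F}^\vee)(n)$, and $\mathcal{F}^\vee=(-f)^*\mathcal{L}_\psi$. Combining with the comparison between $H^n$ and $H^n_c$ for the sheaf attached to $-f$ (which is again middle-dimensional by the same vanishing, since $-f$ has the same smooth leading form up to sign) yields the functional equation $L_{f,q}(t)^{(-1)^{n-1}}=c\cdot(q^{n/2}t)^{(d-1)^n}L_{-f,q}(1/q^n t)^{(-1)^{n-1}}$ for a root of unity $c$. For the third bullet, the sheaf $\mathcal{L}_\psi$ is pure of weight $0$, hence $\mathcal{F}$ is pure of weight $0$; Deligne's main theorem in Weil II bounds the weights of $H^n_c$ by $\leq n$, while Poincaré duality plus the same bound on $H^n_c$ for $-f$ yields the reverse inequality. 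Purity of weight $n$ on $H^n_c$ is exactly the assertion that reciprocal zeros have absolute value $q^{n/2}=\sqrt{q^n}$.

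The main obstacle I expect is the middle-degree vanishing together with the exact Euler characteristic computation under only the hypothesis that the \emph{leading form} (not $f$ itself) is smooth. This is where the hypothesis is genuinely used, and it requires a careful analysis of the ramification of $\mathcal{F}$ at the hyperplane at infinity via the Swan conductor, as well as the non-triviality of the character along every boundary stratum, in order to invoke Artin-type vanishing correctly.
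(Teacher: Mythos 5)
The paper does not actually prove this statement: it is quoted as a known theorem of Deligne (Weil I, \S 8; see also Weil II and SGA 4$\tfrac12$, ``Sommes trigonom\'etriques''), so there is no internal proof to compare yours with. Your $\ell$-adic outline is the standard route --- essentially Deligne's own: trace formula for $f^*\mathcal{L}_\psi$, concentration in middle degree, Poincar\'e duality, and weight bounds --- so the overall strategy is the right one.

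As a proof, however, your text has gaps at exactly the load-bearing points. First, the concentration $H^i_c(\mathbb{A}^n,\mathcal{F})=0$ for $i\neq n$ together with $\dim H^n_c=(d-1)^n$ \emph{is} the content of the theorem and cannot be delegated to an unnamed ``Artin/weak-Lefschetz type vanishing'': Artin vanishing plus duality only kills $H^i_c$ for $i<n$, and the range $n<i<2n$ is precisely where smoothness of the leading form must enter, via the ramification analysis at the hyperplane at infinity or an induction on hyperplane sections (Deligne's Th\'eor\`eme 8.4, or the Katz/Adolphson--Sperber nondegeneracy arguments). Moreover, the Bombieri--Adolphson--Sperber polytope formula applied to ${\rm Simp}(n,d)$ computes the Euler characteristic of the sum over the torus, namely $n!\,{\rm Vol}({\rm Simp}(n,d))=d^n$, not $(d-1)^n$; to get $\chi_c(\mathbb{A}^n,\mathcal{F})=(1-d)^n=(-1)^n(d-1)^n$ you must decompose $\mathbb{A}^n$ into coordinate tori and use that the paper's smoothness hypothesis on $f_d$ gives nondegeneracy on every coordinate face, so that $\sum_k\binom{n}{k}(-1)^kd^k=(1-d)^n$. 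Second, both the functional equation and the lower weight bound need more than duality: duality identifies $H^n_c(\mathcal{F})^\vee$ with $H^n(\mathbb{A}^n,\mathcal{F}^\vee)(n)$, i.e. with \emph{ordinary} cohomology for $-f$, so you must also know that the forget-supports map $H^n_c(\mathbb{A}^n,\mathcal{F}^\vee)\to H^n(\mathbb{A}^n,\mathcal{F}^\vee)$ is an isomorphism (or at least surjective). This is true for Deligne polynomials but is not a formal consequence of concentration in degree $n$; without it your deduction of weights $\geq n$ (hence purity) and of the pairing of reciprocal zeros between $L_{f,q}$ and $L_{-f,q}$ is incomplete. With these two ingredients supplied from the literature, your argument is the standard proof; given that the paper itself only cites Deligne, a citation would also have sufficed.
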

\begin{definition}If
$f(x)\in\overline{{\mathbb F}}_p[x_1,x_2,\cdots, x_n]_{(d)}$ has smooth leading form, then the Newton polygon ${\rm NP}(f)$ is the concave polygon on the right half plane with initial point $(0,0)$ whose slope distribution is listed the the following table

\begin{tabular}{|c|c|c|}
                                                      \hline
                                                      slope & multiplicity&variation \\
                                                      \hline
                                                      ${\rm ord}_q(\alpha)$ & $m(\alpha^{-1})$ &$L_{f,q}(\alpha^{-1})^{(-1)^{n-1}}=0$\\
                                                      \hline
                                                    \end{tabular}

Here $q$ is any power of $p$ such that $\mathbb{F}_q$ contains the coefficients of $f$, and $m(\alpha^{-1})$ is the order of $\alpha^{-1}$ as a zero of $L_{f,q}(t)^{(-1)^{n-1}}$.\end{definition}
By Deligne's result, the Newton polygon ${\rm NP}(f)$ is independent of $q$,
has  end point
$(d-1)^n\cdot(1,\frac{n}{2})$, and is symmetric in the sense that
 the multiplicity of slope ${\rm ord}_q(\alpha)$ is equal to the multiplicity of slope $n-{\rm ord}_q(\alpha)$.
\begin{definition}The generic Newton polygon of $\overline{{\mathbb F}}_p[x_1,x_2,\cdots, x_n]_{(d)}$ is defined to be the polygon
$${\rm GNP}(n,d;p):=\inf_{{\rm Disc}(n,d)(f)\neq0}{\rm NP}(f),$$
where ${\rm Disc}(n,d)(f)$ is the discriminant of the leading form of $f$.\end{definition}
\begin{lemma}[Grothendieck's specialization lemma]There exists a nonzero polynomial
$${\rm Hass}(n,d;p)\in{\mathbb F}_p[a_w\mid w\in{\rm Simp}(n,d)\cap{\mathbb Z}^n]$$
such that if $f(x)\in\overline{{\mathbb F}}_p[x_1,x_2,\cdots, x_n]_{(d)}$ has smooth leading form and satisfies
${\rm Hass}(n,d;p)(f)\neq0$,
then
$${\rm NP}(f)={\rm GNP}(n,d;p).$$\end{lemma}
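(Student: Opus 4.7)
The plan is to produce ${\rm Hass}(n,d;p)$ by applying Grothendieck's semicontinuity theorem for Newton polygons in families of $F$-isocrystals to the natural family of exponential sums over the coefficient space.

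First I set up the parameter variety. Let $N=\#({\rm Simp}(n,d)\cap{\mathbb Z}^n)$ and consider the affine space ${\mathbb A}^N_{{\mathbb F}_p}={\rm Spec}\,{\mathbb F}_p[a_w\mid w\in{\rm Simp}(n,d)\cap{\mathbb Z}^n]$, together with the Zariski open subscheme $V\subset{\mathbb A}^N_{{\mathbb F}_p}$ defined by ${\rm Disc}(n,d)\neq 0$. By the discriminant lemma of Gelfand et al., every $f\in V(\overline{{\mathbb F}}_p)$ has smooth leading form, so by Deligne's theorem the reciprocal of $L_{f,q}(t)^{(-1)^{n-1}}$ is a polynomial of the fixed degree $r:=(d-1)^n$. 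Writing
$$L_{f,q}(t)^{(-1)^{n-1}}=\sum_{i=0}^{r}c_i(f)t^i,\quad c_0(f)=1,$$
the Newton polygon ${\rm NP}(f)$ is the lower convex hull of the points $\{(i,{\rm ord}_q c_i(f))\}_{i=0}^{r}$.

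Next I invoke Dwork's $p$-adic cohomology theory to realize this $L$-function as the characteristic polynomial of a completely continuous Frobenius operator on a fixed $p$-adic Banach space, the matrix entries of which are $p$-adic analytic functions in the Teichm\"uller lifts of the parameters $a_w$. This equips $V$ with a family of $F$-isocrystals, to which Grothendieck--Katz semicontinuity applies: for every concave polygon $P$ with integral break points and fixed endpoints $(0,0)$ and $(r,nr/2)$, the locus $\{f\in V:{\rm NP}(f)\geq P\}$ is Zariski closed in $V$. Since every ${\rm NP}(f)$ lies above the Hodge polygon ${\rm HP}(n,d)$ by Sperber's theorem and shares the same endpoints, only finitely many polygons $P$ can actually occur, so the infimum ${\rm GNP}(n,d;p)$ is attained. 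Consequently the complement in $V$ of $\{f:{\rm NP}(f)={\rm GNP}(n,d;p)\}$ is a proper Zariski closed subset cut out by some nonzero polynomial over ${\mathbb F}_p$, and taking ${\rm Hass}(n,d;p)$ to be such a defining polynomial (multiplied if necessary by ${\rm Disc}(n,d)$ so that smoothness of the leading form is also enforced) yields the desired Hasse polynomial.

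The main technical obstacle is setting up the $p$-adic analytic family of Frobenius operators: one must work with Dwork's splitting function and a suitable Banach space of overconvergent power series on which Frobenius acts completely continuously, and verify that its matrix entries depend analytically on the $a_w$'s. Once this family is in place, Zariski semicontinuity is the content of Grothendieck's specialization theorem, applied either via Katz's slope filtration on the associated $F$-isocrystal over $V$, or directly by expressing the conditions ${\rm ord}_q c_i(f)\geq\lambda$ as the vanishing of suitable polynomial invariants in the $a_w$'s.
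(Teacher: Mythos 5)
The paper does not actually prove this lemma: it is quoted as a known result (``Grothendieck's specialization lemma''), the standard references being Grothendieck--Katz specialization for $F$-crystals (Katz 1979) and its use for families of exponential sums in Wan's work, so there is no internal proof to compare against. Your outline reconstructs exactly that standard argument --- parametrize the smooth-leading-form locus $V\subset{\mathbb A}^N_{{\mathbb F}_p}$ by the coefficients $a_w$, realize $L_{f,q}(t)^{(-1)^{n-1}}$ via Dwork's trace formula as coming from a family of Frobenius (nuclear) operators whose matrix entries vary analytically in the Teichm\"uller lifts of the $a_w$, apply semicontinuity of the Newton polygon to conclude that each locus $\{{\rm NP}(f)\geq P\}$ is Zariski closed, and use finiteness of the possible polygons to see that the generic value is attained on a dense open set cut out by a nonzero polynomial --- and this is essentially correct and is the intended proof. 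Two small points you should tighten if this were written out: the occurring polygons are finitely many not because their break points are integral (slopes here lie in $\frac{1}{d(p-1)}{\mathbb Z}\cap[0,n]$, so vertices have bounded denominators), and the closedness of $\{{\rm ord}_q c_i(f)\geq\lambda\}$ is not automatic from analyticity alone --- one must express each such condition as the vanishing over ${\mathbb F}_p$ of finitely many polynomials in the $a_w$ obtained by reducing truncations of the Fredholm coefficients modulo appropriate powers of $\pi$, which is the technical content of the specialization theorem you are invoking.
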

\subsection{Historical Results}
\paragraph{}
The result of Sperber [1986] implies the following.
\begin{theorem}[Sperber]\label{hodge-bound-1} If $p$ is sufficiently large, then
$${\rm GNP}(n,d;p)\geq{\rm HP}(n,d).
$$\end{theorem}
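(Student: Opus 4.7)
The plan is to realize $L_{f,q}(t)^{(-1)^{n-1}}$ $p$-adically as the characteristic polynomial of a Frobenius operator on a finite-dimensional subquotient of a weighted $p$-adic Banach space whose basis is indexed by the interior lattice points of the simplex, and then deduce the Hodge bound by lower-bounding the $p$-adic valuations of the matrix entries of Frobenius in that basis. Concretely, I would invoke the Dwork--Adolphson--Sperber formalism: choose a Teichm\"uller lift $\tilde f$ of $f$ and Dwork's splitting function $\theta(t)=\exp(\pi t-\pi t^p/p)$ with $\pi^{p-1}=-p$; let $L$ be the $p$-adic Banach space of series $\sum_u c_u x^u$ indexed by $u$ in the cone over $\mathrm{Simp}(n,d)$, weighted by $\mathrm{ord}_p(c_u)\geq w(u):=(u_1+\cdots+u_n)/d$. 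A completely continuous Dwork operator $\alpha$ on $L$ is built from multiplication by the lift of $\theta(\tilde f)$ together with a division-by-$p$ on exponents, and Dwork's trace formula combined with a Koszul reduction (valid precisely because the leading form is smooth) expresses $L_{f,q}(t)^{(-1)^{n-1}}$ as the reciprocal characteristic polynomial of an iterate of $\alpha$ on a finite-dimensional subquotient $H$ of $L$.

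Next, the smoothness hypothesis makes the Koszul complex on $L$ associated with the logarithmic derivatives $\pi x_i\partial\tilde f/\partial x_i$ acyclic outside top degree. The top cohomology $H$ is then $p$-adically free of rank $(d-1)^n$, matching Deligne's degree, and admits the monomial basis $\{x^u\mid u\in(0,d)^n\cap\mathbb Z^n\}$; this basis is indexed exactly by the lattice points whose weights $w(u)=j/d$ contribute to the Hodge numbers $h_j$. To finish, I would bound the $p$-adic valuations of Frobenius in this basis: the coefficient of $t^m$ in $\theta(t)$ has $\pi$-adic order at least $m$, and both the exponent division and the reduction modulo the twisted Jacobian ideal preserve weights up to controlled denominators, so one obtains $\mathrm{ord}_p(\alpha_{v,u})\geq w(v)$ for every matrix entry. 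The Newton polygon of $\det(1-t\alpha\mid H)$ therefore dominates the polygon whose slope $j/d$ occurs with multiplicity $h_j$, which is by definition $\mathrm{HP}(n,d)$. Taking the infimum over $f$ with smooth leading form yields $\mathrm{GNP}(n,d;p)\geq\mathrm{HP}(n,d)$.

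The principal obstacle is executing the Koszul reduction in the overconvergent Banach setting without losing $p$-adic precision: one must express an arbitrary monomial $x^v\in L$ modulo the twisted Jacobian ideal as a combination of the interior basis, and the division algorithm introduces denominators determined by finitely many leading coefficients arising from the smoothness of $\tilde f$. This is exactly where the hypothesis ``$p$ sufficiently large'' enters --- one needs $p$ to avoid this finite set of bad primes so that the valuation estimate $\mathrm{ord}_p(\alpha_{v,u})\geq w(v)$ survives the reduction. Once that uniform estimate is in place, the Hodge lower bound is an immediate consequence of the elementary fact that the Newton polygon of a completely continuous operator lies above the diagonal polygon assembled from the orders of its matrix entries.
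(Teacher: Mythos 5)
Your outline is essentially Sperber's argument, which the paper simply cites for this theorem rather than reproving: the same Dwork trace formula, the rank-$(d-1)^n$ space $H_0(f)$ with monomial basis $\pi^{\deg u}x^u$ indexed by the interior lattice points, and the valuation bound on the Frobenius matrix entries are exactly the ingredients the paper reviews in \S 6 (where the estimate ${\rm ord}_\pi(c_{u,w}(f))\geq\lceil\deg(pu-w)\rceil$ yields the sharper bound ${\rm GNP}(n,d;p)\geq{\rm PP}(n,d;p)={\rm FP}(n,d;p)\geq{\rm HP}(n,d)$). So your proposal is correct in substance and takes essentially the same route as the paper's (and Sperber's) approach.
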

The well-known Stickelberger's theorem implies the following.
\begin{theorem}[Stickelberger]\label{Stickelberger}If $p\equiv1({\rm mod }\ d)$ is sufficiently large, then
$${\rm GNP}(n,d;p)= {\rm HP}(n,d)={\rm FP}(n,d;p).$$
\end{theorem}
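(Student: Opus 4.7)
The plan is to treat the two claimed equalities separately, reducing the right-hand equality to an inspection of the definitions and the left-hand equality to an application of Sperber's Hodge bound together with a Stickelberger-type computation on an explicit polynomial.

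For the equality ${\rm HP}(n,d)={\rm FP}(n,d;p)$, I would observe that the hypothesis $p\equiv 1\pmod d$ forces $(p-1)j\equiv 0\pmod d$ for every $j$, placing every pair $(j,\epsilon)$ into the ``trivial case'' already recorded in the excerpt.  Consequently $h_{j,1}=0$ and $\varpi_{j,0}=j/d$, so the slope-multiplicity tables defining ${\rm FP}(n,d;p)$ and ${\rm HP}(n,d)$ coincide termwise.  No further work is needed for this half.

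For the nontrivial equality ${\rm GNP}(n,d;p)={\rm HP}(n,d)$, Theorem \ref{hodge-bound-1} of Sperber supplies ${\rm GNP}(n,d;p)\geq{\rm HP}(n,d)$, so the task is to produce the reverse inequality by exhibiting a single $f$ with ${\rm Disc}(n,d)(f)\neq 0$ whose Newton polygon satisfies ${\rm NP}(f)\leq{\rm HP}(n,d)$.  The idea is to pick $f$ whose associated $L$-function can be written as an alternating product of Gauss sums attached to characters of ${\mathbb F}_q^{\ast}$ of order dividing $d$; such characters exist precisely because $p\equiv 1\pmod d$.  Concretely, I would work with Dwork's cohomological realization, exhibit a Frobenius operator on a $p$-adic module of rank $(d-1)^n$ with basis indexed by the interior lattice points $u\in(0,d)^n\cap{\mathbb Z}^n$ of the simplex, and show that for a suitable Teichm\"uller lift of a near-diagonal $f$ this operator is upper triangular with diagonal entry at index $u$ a product of Gauss sums.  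By the classical Stickelberger theorem, ${\rm ord}_q$ of each such diagonal entry equals $|u|/d$, so the multiplicity of slope $j/d$ in ${\rm NP}(f)$ is at most $\#\{u\in(0,d)^n\cap{\mathbb Z}^n:|u|=j\}=h_j$, i.e.\ ${\rm NP}(f)\leq{\rm HP}(n,d)$.  Combined with Sperber and with Grothendieck's specialization lemma (so that the generic $f$ with ${\rm Hass}(n,d;p)(f)\neq 0$ realizes this bound as ${\rm GNP}(n,d;p)$), this yields the desired equality.

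The main obstacle is justifying that the off-diagonal entries of the Frobenius matrix do not lower the $p$-adic valuations of the eigenvalues below the diagonal values dictated by Stickelberger.  This is precisely where the hypothesis ``$p$ sufficiently large'' is spent: one needs $p$ large enough that (i) Sperber's Hodge bound is in force, (ii) the congruences controlling Dwork's splitting function place every off-diagonal Frobenius contribution at strictly larger $p$-adic valuation than the corresponding diagonal entry, and (iii) the Hasse-type polynomial ${\rm Hass}(n,d;p)$ is nonvanishing so that Grothendieck specialization applies.
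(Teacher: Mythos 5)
The paper does not prove this theorem; it records it as a historical result with the one-line justification ``The well-known Stickelberger's theorem implies the following,'' so there is no internal proof to compare against. Your sketch is essentially the standard expansion of that citation, and both halves are correct in outline. Two remarks are worth making. First, for the strictly diagonal polynomial $f=x_1^d+\cdots+x_n^d$, which has smooth leading form since $p\nmid d$, the $L$-function factors directly as a product over $n$-tuples of nontrivial multiplicative characters of ${\mathbb F}_q^\ast$ of order dividing $d$, with each reciprocal root a product of Gauss sums; no cohomological ``near-diagonal'' or ``upper triangular'' bookkeeping is needed, so the off-diagonal obstacle you flag simply does not arise for this choice of $f$, and classical Stickelberger gives ${\rm ord}_q$ of each reciprocal root as a sum $\sum_i a_i/d$ with $a_i\in\{1,\dots,d-1\}$, reproducing the Hodge slope distribution exactly. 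Second, the appeal to Grothendieck specialization via ${\rm Hass}(n,d;p)$ is redundant: ${\rm GNP}(n,d;p)$ is defined as an infimum over all $f$ with ${\rm Disc}(n,d)(f)\neq0$, so producing a single such $f$ with ${\rm NP}(f)\leq{\rm HP}(n,d)$ already gives ${\rm GNP}(n,d;p)\leq{\rm HP}(n,d)$, and Sperber's bound closes the other direction. With these simplifications your argument becomes the usual short proof.
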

Applications and  generalizations of the above theorem were intensively studied by mathematicians
such as Adolphson and  Sperber [1989], Wan [1993], Dwork [1973], Mazure [1972],
Deligne [1980], Illusie [1990], Katz [Mazure 1972], Koblitz [1975] and Miller [Koblitz 1975].

If $n=1$, a nontrivial result was proved by Zhu [2003; 2004]. The result was then simplified by
Blache and F\'{e}rard [2007].
\begin{theorem}[Zhu, Blache-F\'{e}rard]If $n=1$ and $p$ is sufficiently large, then
$${\rm GNP}(n,d;p)={\rm FP}(n,d;p).$$
\end{theorem}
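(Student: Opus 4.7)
The plan is to realize the reciprocal $L$-function as the characteristic polynomial of a completely continuous Frobenius operator on a $p$-adic Banach space via Dwork's trace formula, and then to identify its Newton polygon on a Zariski-open locus of polynomials.

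First I would fix $q=p^a$ containing the coefficients of $f$, Dwork's splitting function $\theta(y)=\exp(\pi(y-y^p))$ with $\pi^{p-1}=-p$, and the Teichm\"uller lift $\hat f$. The product $F(x)=\prod_{k=0}^{a-1}\theta(\hat f^{\sigma^k}(x^{p^k}))$ defines, via $\alpha_f=\psi_q\circ F$, a completely continuous operator on an overconvergent Banach space with monomial basis $\{\pi^{c_i}x^i\}_{1\le i\le d-1}$, and
\[ L_{f,q}(t)^{-1}=\det(1-t\alpha_f).\]
The twisting exponents $c_i$ will be chosen so that the diagonal entry $M_{ii}$ of $\alpha_f$ in this basis sits at $p$-adic valuation precisely $\varpi_{i,\epsilon_i}$, the Frobenius slope from Definition 1.4; this is possible because $\lceil(p-1)i/d\rceil$ depends only on the residue class of $i$ modulo $d$.

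Next I would establish the core estimate: for all $i,j\in\{1,\ldots,d-1\}$,
\[ {\rm ord}_p M_{ij} \ge \varpi_{j,\epsilon_j}, \]
with equality on the diagonal modulo a specific polynomial in the coefficients of $f$, which I would identify with the Hasse polynomial ${\rm Hass}(1,d;p)$ of the specialization lemma. The entries $M_{ij}$ are to be read off the Dwork expansion of $F$ as coefficients of $x^{jp-i}$, and the constraint $jp-i\equiv 0\pmod{(p-1)}$ coming from $\psi_q$ forces a base-$p$ carry pattern whose weight is at least $\varpi_{j,\epsilon_j}$. Once this estimate is in place, the Newton polygon of $\det(1-tM)$ dominates ${\rm FP}(1,d;p)$ segment by segment, with equality on the locus ${\rm Hass}(1,d;p)(f)\ne 0$, and Grothendieck's specialization lemma upgrades this to ${\rm GNP}(1,d;p)={\rm FP}(1,d;p)$, matching the lower bound ${\rm FP}\ge{\rm HP}$ against Sperber's theorem.

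The main obstacle will be the off-diagonal estimate. Zhu's original proof handles it by a combinatorial analysis of carries in base-$p$ expansions associated to the monomials $x^{jp-i}$ under the above congruence, together with a uniform asymptotic as $p\to\infty$. Blache--F\'erard's simplification repackages the same estimate via the multiplication-by-$p$ action on $\mathbb{Z}/d\mathbb{Z}$, reducing the combinatorics to a single monotonicity inequality along $p$-orbits in $\mathbb{Z}/d\mathbb{Z}$. Because $n=1$ involves only one congruence constraint, this inequality closes for all large $p$; extending the argument to $n>1$ requires controlling coupled congruences on the $n$ coordinates, which is presumably what forces the present paper's restriction to $n=2$, $d$ even, $p\equiv-1\pmod d$.
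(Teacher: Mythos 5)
This theorem is not proved in the paper at all: it is quoted as a historical result of Zhu and of Blache--F\'erard, so there is no internal proof to match your outline against; your sketch follows the same Dwork-operator strategy that those works (and the paper's own \S4--6 machinery for general $n$) use. However, as a proof it has a genuine gap, and one step is wrong as stated. You claim the twisting exponents $c_i$ can be chosen so that the diagonal entry $M_{ii}$ has valuation exactly $\varpi_{i,\epsilon_i}$, and that the estimate ${\rm ord}_p M_{ij}\ge\varpi_{j,\epsilon_j}$ with generic equality on the diagonal yields the Frobenius polygon. This cannot work: conjugating by a diagonal matrix does not change diagonal valuations, and the basic Dwork estimate gives ${\rm ord}\,M_{ii}\ge\frac{1}{p-1}\lceil\frac{(p-1)i}{d}\rceil=\varpi_{i,0}$, whereas the Frobenius polygon contains the strictly smaller slopes $\varpi_{i,1}=\varpi_{i,0}-\frac{1}{p-1}$ precisely when $(p-1)i\not\equiv0\ ({\rm mod}\ d)$, i.e.\ in every nontrivial case $p\not\equiv1\ ({\rm mod}\ d)$. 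Those slopes are never realized by a diagonal entry; they arise only from products along nontrivial permutations (cycles pairing residues $i\mapsto\sigma_i$-conjugates) in the expansion of the $k\times k$ minors of the characteristic polynomial. This optimization over permutations is exactly what Zhu and Blache--F\'erard carry out, and what the present paper encodes as the ``premium'' of twisted permutations in \S4; a diagonal-dominance argument proves only ${\rm NP}(f)\ge$ the $\varpi_{j,0}$-polygon, which lies strictly above ${\rm FP}(1,d;p)$.

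Beyond this, the two decisive technical points are asserted rather than proved: (i) that the minimal-valuation terms in each relevant minor do not cancel generically, i.e.\ that the associated Hasse-type polynomial (here the analogue of ${\rm TH}^{(a)}(k)$) is a nonzero polynomial of controlled degree so that a smooth $f$ with nonvanishing value exists for $p$ large --- this is the heart of Zhu's and Blache--F\'erard's papers and of \S5--7 here; and (ii) that the polygon produced by this optimization coincides with ${\rm FP}(1,d;p)$ as defined in this paper (the coincidence of the premium polygon with the Frobenius polygon, \S4). Deferring both to ``as in Zhu'' leaves the proposal as a correct roadmap to the literature rather than a proof. A small additional slip: for $n=1$ one has $L_{f,q}(t)^{(-1)^{n-1}}=L_{f,q}(t)$ of degree $d-1$, so the trace formula on the $(d-1)$-dimensional quotient reads $L_{f,q}(t)=\det(1-t\alpha_f)$, not $L_{f,q}(t)^{-1}$.
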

\subsection{Main Results of Present Paper}
\paragraph{}
In present paper we shall prove the following theorems.
\begin{theorem}\label{main-theorem-lower-bound-1}If $p>2d$, then
$${\rm GNP}(n,d;p)\geq{\rm FP}(n,d;p).$$
\end{theorem}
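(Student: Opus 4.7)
The plan is to prove the inequality ${\rm GNP}(n,d;p)\geq{\rm FP}(n,d;p)$ by refining the Dwork--Adolphson--Sperber derivation of the Hodge bound in Theorem \ref{hodge-bound-1}. One realizes $L_{f,q}(t)^{(-1)^{n-1}}$ (for generic $f$) as the characteristic polynomial of a completely continuous Dwork--Frobenius operator $\alpha$ acting on a $p$-adic Banach space $B$, with a primitive piece $H\subset B$ of dimension $(d-1)^{n}$ whose characteristic polynomial recovers the full $L$-function. Here $\alpha=\psi_p\circ F$ where $\psi_p$ is a partial inverse Frobenius and $F(x)=\prod_{w} E(\hat a_w\pi x^{w})$ is built from a Teichm\"uller lift $\hat f$ of $f$, the Dwork splitting function $E(T)=\exp(\sum_{i\geq 0}\pi^{p^{i}}T^{p^{i}}/p^{i})$, and a uniformizer $\pi$ with $\pi^{p-1}=-p$. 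The naive monomial basis of $H$ indexed by interior lattice points $u\in(0,d)^{n}\cap\mathbb{Z}^{n}$ gives, after a suitable normalization, column valuations at least $|v|/d$ for the matrix of $\alpha$; this is Sperber's estimate.

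To sharpen the Hodge bound to the Frobenius bound, the idea is to reorganize the basis of $H$ into blocks indexed by pairs $(j,\epsilon)$ with block sizes $h_{j,\epsilon}$, so that the $(j,\epsilon)$-block of columns has valuation at least $\varpi_{j,\epsilon}$ instead of the coarser $j/d$. The refined vectors $\gamma_{u,\epsilon}$ are obtained from the monomial vectors by subtracting higher-valuation corrections drawn from the $p$-adic expansion of $E(T)$; the leading term $\exp(\pi T)$ produces the $\epsilon=0$ columns with valuation $\lceil(p-1)j/d\rceil/(p-1)$, while the first correction $\exp(-\pi T^{p}/p)$ produces the $\epsilon=1$ columns with valuation $(\lceil(p-1)j/d\rceil-1)/(p-1)$. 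When $(p-1)j\equiv 0\pmod d$, the ceiling is already attained, so $h_{j,1}=0$ and no refinement occurs, which matches the ``trivial case'' conventions. The hypothesis $p>2d$ enters precisely to guarantee that the remaining terms of $E(T)$, which carry powers of $\pi^{p^{2}}$ or higher, contribute valuations strictly larger than any $\varpi_{j,\epsilon}$, so they cannot spoil the block-valuation structure.

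Having established the column valuation bound $\mathrm{ord}_p A_{v,(u,\epsilon)}\geq\varpi_{|u|,\epsilon}$, one applies the Hodge--Newton lemma: the Newton polygon of a matrix over a discretely valued ring lies above the concave polygon built from its sorted column valuations. This gives ${\rm NP}(f)\geq{\rm FP}(n,d;p)$ for any $f$ with smooth leading form whose matrix achieves these bounds, and Grothendieck's specialization lemma transfers the inequality to ${\rm GNP}(n,d;p)$. Passing from $\alpha$ to its $a$-th iterate to accommodate coefficients in $\mathbb{F}_q=\mathbb{F}_{p^{a}}$ multiplies all slopes by $a$ and does not affect the comparison.

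The principal obstacle will be the refined column-valuation estimate, which requires a careful combinatorial description of how $\alpha$ couples monomials at degree $j$ to those at degrees $j'$ through the Dwork expansion, and an explicit construction of the correction vectors $\gamma_{u,\epsilon}$ so that the diagonal blocks have exact valuation $\varpi_{|u|,\epsilon}$ while off-diagonal blocks strictly exceed it. Tracking the interaction between the ceiling function $\lceil(p-1)j/d\rceil$, the $\pi$-adic expansion of $E$, and the Teichm\"uller digits of $\hat f$ is delicate, and the threshold $p>2d$ is the minimal bound that simultaneously separates the $1/(p-1)$ shifts from neighboring Hodge slopes and dominates the $\pi^{p^{2}}$-order error terms from higher iterates of $E$.
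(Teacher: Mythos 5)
There is a genuine gap at the core of your plan. You propose to deduce ${\rm NP}(f)\geq{\rm FP}(n,d;p)$ from per-column valuation bounds in a cleverly refined basis, invoking the fact that the Newton polygon of a matrix lies above the polygon of its sorted column valuations. But neither the Hodge bound nor the sharper Frobenius bound is reachable this way: the entry of $\phi\circ E_f$ at $(u,w)$ satisfies only ${\rm ord}_{\pi}\geq\lceil\deg(pu-w)\rceil$ (up to the diagonal normalization), and for fixed $w$ the minimum over the row index $u$ falls far short of the slope $\deg w$, let alone $\varpi_{j,0}=\frac{1}{p-1}\lceil\frac{(p-1)j}{d}\rceil$. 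Already the Hodge estimate requires the principal-minor structure of the characteristic polynomial: $\nu_k(f)$ is a sum over size-$k$ index sets $A$ and permutations $\tau$ of products whose valuation is bounded by $\sum_{u\in A}\lceil\deg(pu-\tau(u))\rceil\geq(p-1)\sum_{u\in A}\deg u$, and it is this matching of rows to columns by a permutation, not any row or column bound, that produces ${\rm HP}$. Your sketched vectors $\gamma_{u,\epsilon}$ (``subtract higher-valuation corrections from the expansion of $E$,'' with $\exp(\pi T)$ giving the $\epsilon=0$ columns and $\exp(-\pi T^{p}/p)$ the $\epsilon=1$ columns) is not a construction; producing columns with the low valuation $\varpi_{j,1}<\frac{j}{d}$ is vacuous, and the hard content is forcing $h_{j,0}$ columns up to $\varpi_{j,0}>\frac{j}{d}$ simultaneously for every smooth $f$, which no diagonal or near-diagonal change of basis can do. Your stated role for $p>2d$ (controlling $\pi^{p^{2}}$ tails of $E$) is also not where the hypothesis is actually needed.

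What is missing is precisely the combinatorial mechanism the paper builds. One first passes to the twisted setting: since the Dwork operator is only ${\mathbb Z}_p[\pi^{\frac{1}{d}}]$-linear, the paper proves a trace formula for $\phi_f$ on $H_0(f)^{{\mathbb Z}/(a)}$ and an explicit formula expressing $\nu_k(f)$ as a sum over $a$-tuples $A\in{\rm SF}(k)^a$ of products over $l\in{\mathbb Z}/(a)$ of determinants in the entries $c_{u,w}(f)^{\sigma^{l-1}}$ (your one-sentence remark that passing to the $a$-th iterate ``does not affect the comparison'' skips this). The estimate ${\rm ord}_{\pi}(c_{u,w}(f))\geq\lceil\deg(pu-w)\rceil$ then gives ${\rm ord}_q(\nu_k(f))\geq{\rm Prem}(k)$, where ${\rm Prem}(k)$ is the minimum over $A$ and over twisted permutations $\tau\in{\rm Sym}^{(1)}(A)$ of $\frac{1}{a(p-1)}\sum_{l}\sum_{u\in A_l}\lceil\deg(pu-\tau_l(u))\rceil$; and the heart of the proof is the purely combinatorial identification ${\rm Prem}(k)={\rm FP}(n,d;p)(k)$ for $p>2d$ (Section 4), which quantifies how the ceilings interact along a permutation so that not all factors can be simultaneously small. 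Your outline contains no substitute for this identification, so even granting a block organization of the basis by the pairs $(j,\epsilon)$, the inequality ${\rm GNP}(n,d;p)\geq{\rm FP}(n,d;p)$ does not follow from what you have written.
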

 \begin{theorem}\label{main-theorem-exact-bound}If $n=2$, $d$ is even, $p\equiv-1({\rm mod}\ d)$, and $p>d^3$, then
$${\rm GNP}(n,d;p)={\rm FP}(n,d;p).$$
\end{theorem}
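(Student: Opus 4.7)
The plan is to combine Theorem \ref{main-theorem-lower-bound-1}, which supplies the lower bound ${\rm GNP}(2,d;p) \geq {\rm FP}(2,d;p)$ under the hypothesis $p > 2d$ (implied by $p > d^3$), with a matching upper bound. Since ${\rm GNP}$ is by definition an infimum over Newton polygons of smooth leading forms, the upper bound reduces to exhibiting a single polynomial $f \in \overline{{\mathbb F}}_p[x_1,x_2]_{(d)}$ with ${\rm Disc}(2,d)(f) \neq 0$ and ${\rm NP}(f) = {\rm FP}(2,d;p)$; an appeal to Grothendieck's specialization lemma then promotes this to the generic statement.

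First I would realize $L_{f,q}(t)^{-1}$, through Dwork's trace formula, as the characteristic polynomial of a Frobenius operator $\alpha_f$ on the $p$-adic Banach space with monomial basis indexed by the interior lattice points $(0,d)^2 \cap {\mathbb Z}^2$ of the simplex, the matrix entries of $\alpha_f$ being convergent series in the Teichm\"uller lifts of the coefficients of $f$. Under the hypothesis $p \equiv -1 \pmod d$, the map $v \mapsto pv \bmod d$ on $({\mathbb Z}/d{\mathbb Z})^2$ is the involution $v \mapsto -v$, whose orbits have size at most two; a two-element orbit pairs $(u_1, u_2)$ with $(d - u_1, d - u_2)$, thereby pairing degree $j$ with degree $2d - j$. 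Tracking the supports of matrix entries presents $\alpha_f$, after reordering the basis, as a block-diagonal operator with $1 \times 1$ and $2 \times 2$ Frobenius-invariant blocks indexed by these orbits.

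The core of the proof is then a local analysis of each block. The only singleton orbit is $u = (d/2, d/2)$, which has degree $d$ and contributes the trivial Frobenius slope $1$. For a two-element orbit $\{u,v\}$ the associated $2 \times 2$ block of $\alpha_f$ has diagonal entries whose $p$-adic valuations are bounded below by $\varpi_{j,\epsilon}$ and $\varpi_{2d-j,1-\epsilon} = 2 - \varpi_{j,\epsilon}$ respectively, where $\epsilon$ is determined by the residue class of $u$ modulo $d$ via the decomposition $h_j = h_{j,0}+h_{j,1}$. The goal is to show that for a Zariski-dense set of coefficient choices the two Newton slopes of this block are exactly the prescribed pair, so that assembling all blocks recovers the slope distribution of ${\rm FP}(2,d;p)$.

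The main obstacle is verifying the required non-degeneracy of each $2 \times 2$ block generically: concretely, that a certain $2 \times 2$ determinant of binomial-type expressions in the coefficients of $f$, arising as the leading $p$-adic term of the block, is not identically zero as a polynomial. The $d$-even hypothesis keeps the involution orbits combinatorially clean and isolates the unique fixed point $(d/2, d/2)$, while $p \equiv -1 \pmod d$ makes the paired exponents symmetric across the simplex and keeps the leading-order binomial determinant structurally non-trivial. The quantitative hypothesis $p > d^3$ is the input needed to guarantee that the tail of Dwork's series in each block has strictly larger $p$-adic valuation than its leading term, so that the computed leading-order determinant actually controls the Newton slope. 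Once the non-vanishing is established for all blocks simultaneously, the resulting Hasse-type polynomial in the coefficients of $f$ is nonzero, and Grothendieck's specialization lemma delivers the equality ${\rm GNP}(2,d;p) = {\rm FP}(2,d;p)$.
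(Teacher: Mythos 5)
Your overall frame (lower bound from Theorem \ref{main-theorem-lower-bound-1}, then exhibit one $f$ with smooth leading form and ${\rm NP}(f)={\rm FP}(2,d;p)$) matches the paper, but the step you rely on to produce such an $f$ does not hold as stated. The Dwork/Sperber operator is \emph{not} block-diagonal with $1\times1$ and $2\times2$ blocks indexed by the orbits of $u\mapsto pu\equiv -u\ ({\rm mod}\ d)$: the matrix entries $c_{u,w}(f)$ are generically nonzero for essentially all pairs $(u,w)$, since $pu-w$ is almost always representable. What is true is only a valuation statement, ${\rm ord}_\pi(c_{u,w}(f))\geq\lceil\deg(pu-w)\rceil$, and the correct substitute for your ``blocks'' is the paper's analysis of which permutation patterns in the expansion of the $k\times k$ minors achieve the minimal total valuation (the premium ${\rm Prem}(\tau)$, the sets ${\rm Sym}_0^{(1)}(A)$, ${\rm SF}_0(k)$, and the explicit formula for $\nu_k(f)$ as a sum over $a$-tuples $A$ and twisted permutations). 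Your pairing $u\leftrightarrow(d-u_1,d-u_2)$ is indeed the shadow of the paper's minimal-premium permutation $\tau_0$ (which sends $u$ to $(\frac d2,\frac d2)-u$ or $(u_2,u_1)$), but turning that heuristic into a proof requires showing all other terms have strictly larger valuation and then that the sum of the leading terms --- the twisted Hasse polynomial ${\rm TH}^{(a)}(k)$ --- is nonzero at some $f$ whose leading form is simultaneously smooth. You assert this non-degeneracy (``a certain $2\times2$ determinant of binomial-type expressions \dots is not identically zero'') rather than prove it; this is exactly the hard part, occupying \S\S5--7 of the paper (specialization to the leading form $a_{d,0}x_1^d+a_{\frac d2,\frac d2}x_1^{\frac d2}x_2^{\frac d2}+a_{0,d}x_2^d$, a monomial-order ``minimal form'' ${\rm TH}_1^{(a)}(k)$, uniqueness of the minimal permutation $\tau_0$ and of the minimal tuple $A$, and the factorization ${\rm TH}_1^{(a)}(k)=\prod_l({\rm Fac}(k){\rm Int}(k))^{p^{l-1}}$ with ${\rm Fac}(k)\neq0$ of degree $<(d-1)^2p$).

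A second concrete gap is the field of definition and the semilinearity of Frobenius. The Dwork operator $\phi_f$ is only $\sigma^{-1}$-linear, and the paper must work over ${\mathbb F}_{p^a}$ with $a\geq 2$: the nonvanishing of ${\rm TH}^{(a)}(k)$ at a point compatible with smoothness is only guaranteed for $a>1$, because the degree bound $(d-1)^2p$ on ${\rm Fac}(k)$ (where $p>d^3=d^{n+1}$ enters) exceeds $p$ but not $p^a$ for $a\geq2$; over ${\mathbb F}_p$ itself there may be no suitable $f$. Your proposal never specifies $q$, treats $\alpha_f$ as an honest linear operator, and says nothing about why ``Zariski-dense'' nonvanishing can be realized by an actual ${\mathbb F}_{p^a}$-point together with ${\rm Disc}(2,d)(f)\neq0$; the paper's trace formula for the semilinear $\phi_f$ on $H_0(f)^{{\mathbb Z}/(a)}$ and its explicit three-coefficient specialization are precisely what fill these holes. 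Finally, note that once one such $f$ is found the equality follows directly from ${\rm GNP}=\inf_f{\rm NP}(f)$ together with the lower bound; Grothendieck's specialization lemma is not needed for this direction.
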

From the above theorem one can deduce the following corollary.
\begin{corollary}\label{wan-conjecture-minus-1}If $n=2$ and $d$ is even, then
$$\lim_{\stackrel{p\equiv-1({\rm mod}\ d)}{p \rightarrow\infty}}{\rm GNP}(n,d;p)={\rm HP}(n,d).$$
\end{corollary}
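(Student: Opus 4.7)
The plan is to derive the corollary as a straightforward consequence of Theorem \ref{main-theorem-exact-bound} combined with the asymptotic triviality of the Frobenius polygon already recorded in the introduction, namely
$$\lim_{p\rightarrow\infty}{\rm FP}(n,d;p)={\rm HP}(n,d).$$
This asymptotic statement holds unconditionally in the residue class of $p$, because the Frobenius slopes $\varpi_{j,\epsilon}=\frac{1}{p-1}(\lceil (p-1)j/d\rceil-\epsilon)$ converge to the Hodge slopes $j/d$ as $p\to\infty$, while the Frobenius multiplicities $h_{j,0}+h_{j,1}$ equal the Hodge multiplicity $h_j$. Hence one is entitled to use it along any subsequence of primes, in particular along those with $p\equiv-1\pmod d$.

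First I would fix $n=2$ and $d$ even, and restrict to primes $p\equiv-1\pmod d$ with $p>d^3$. For such $p$, Theorem \ref{main-theorem-exact-bound} yields the exact equality
$${\rm GNP}(n,d;p)={\rm FP}(n,d;p).$$
Second I would take the limit of both sides as $p\to\infty$ through this residue class. The right-hand side tends to ${\rm HP}(n,d)$ by the asymptotic triviality recalled above. The convergence is genuine convergence of piecewise-linear polygons on the common domain $[0,(d-1)^n]$: the endpoints $(0,0)$ and $(d-1)^n\cdot(1,\frac{n}{2})$ are fixed, and for each break-point index $j$ the vertex $(\sum_{i\leq j}h_{i,0}+h_{i,1},\sum_{i\leq j}\varpi_{i,0}h_{i,0}+\varpi_{i,1}h_{i,1})$ converges to the corresponding Hodge vertex as $p\to\infty$.

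Combining these two steps gives
$$\lim_{\stackrel{p\equiv-1({\rm mod}\ d)}{p\to\infty}}{\rm GNP}(n,d;p)=\lim_{\stackrel{p\equiv-1({\rm mod}\ d)}{p\to\infty}}{\rm FP}(n,d;p)={\rm HP}(n,d),$$
which is the claim. There is no real obstacle: all the substantive work has already been done in Theorem \ref{main-theorem-exact-bound}, and the remaining content is the bookkeeping check that the Frobenius polygon tends to the Hodge polygon as $p\to\infty$, which is immediate from the explicit formulas for $\varpi_{j,\epsilon}$ and $h_{j,\epsilon}$. The only minor point to verify carefully is that the subsequence $\{p\equiv-1\pmod d\}$ is infinite (guaranteed by Dirichlet's theorem, since $\gcd(-1,d)=1$), so that the limit is meaningful.
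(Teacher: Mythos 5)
Your proposal is correct and follows exactly the route the paper intends: combine Theorem \ref{main-theorem-exact-bound} (giving ${\rm GNP}={\rm FP}$ for $p\equiv-1\ ({\rm mod}\ d)$, $p>d^3$) with the asymptotic triviality $\lim_{p\to\infty}{\rm FP}(n,d;p)={\rm HP}(n,d)$ recorded in the introduction, which holds since $\varpi_{j,\epsilon}\to j/d$ and $h_{j,0}+h_{j,1}=h_j$. The paper leaves this deduction implicit, and your bookkeeping (including the Dirichlet remark) fills it in faithfully.
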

\subsection{Conjectures}
\paragraph{}
Wan [2004] put forward the following conjecture.
\begin{conjecture}[Wan's asymptotic $p$-adic Riemann Hypothesis]\label{wan-conjecture} The following identity holds:
$$\lim_{p\rightarrow\infty}{\rm GNP}(n,d;p)={\rm HP}(n,d).$$
\end{conjecture}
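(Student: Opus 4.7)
The strategy is to upgrade the conjecture to the sharper statement
\[{\rm GNP}(n,d;p)={\rm FP}(n,d;p)\]
for every prime $p$ sufficiently large in terms of $n$ and $d$. Granting this, the asymptotic triviality $\lim_{p\to\infty}{\rm FP}(n,d;p)={\rm HP}(n,d)$ noted in the introduction yields Wan's conjecture immediately. The inequality ${\rm GNP}\geq{\rm FP}$ is Theorem~\ref{main-theorem-lower-bound-1}, so the real task is to produce a matching upper bound on the generic Newton polygon.

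To obtain this upper bound I would work in Dwork's $p$-adic cohomological framework. For a generic smooth polynomial $f\in\overline{\mathbb F}_p[x_1,\dots,x_n]_{(d)}$, the reciprocal $L$-function $L_{f,q}(t)^{(-1)^{n-1}}$ arises as the characteristic polynomial of a Frobenius endomorphism $\alpha_f$ on a $p$-adic Banach space whose basis is indexed by the interior lattice points of ${\rm Simp}(n,d)$. Sperber-type entrywise estimates place the $p$-adic valuation at the row/column indexed by $u$ with $\sum u_i=j$ at least $\varpi_{j,\epsilon}$. It therefore suffices to show that at each Frobenius vertex $\sum_{2j-\epsilon\leq 2i-\iota}h_{j,\epsilon}$, the corresponding principal minor of $\alpha_f$, read modulo the next $p$-adic order, is a nonzero polynomial in the coefficients of $f$. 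Grothendieck's specialization lemma then propagates this nonvanishing to the desired identity for the generic Newton polygon.

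The minor computation I would carry out inductively along Frobenius vertices. Grouping rows and columns by the pair $(j,\epsilon)$ and working modulo higher $p$-adic order factors each principal minor as a product of ``atomic'' determinants, one for each $(j,\epsilon)$; the entries of these atoms are polynomial expressions in the coefficients $a_w$ of $f$ supported on a specific slice of ${\rm Simp}(n,d)$. The atoms are of Cauchy-- or Toeplitz-type, and proving they are generically nonzero is a combinatorial problem in linear algebra over $\mathbb F_p$. The case $n=2$, $d$ even, $p\equiv -1\pmod d$ of Theorem~\ref{main-theorem-exact-bound} is the template: the residue condition pairs $j$ with $d-j$, so each atom acquires a symmetry that makes its determinant tractable.

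The principal obstacle is removing the congruence $p\equiv -1\pmod d$. For arbitrary residue classes, the multiplication-by-$p$ permutation of $\mathbb Z/d\mathbb Z$ breaks into orbits of $p$-dependent length, so the atoms no longer arrange into the symmetric blocks exploited in the $p\equiv -1$ case. A \emph{uniform} combinatorial nonvanishing statement covering every residue class is needed, which might be approached by classifying the atoms according to the orbit structure of $p$ modulo $d$ and treating each type via a $p$-adic continuity or Vandermonde-style argument. A second difficulty is the passage from $n=2$ to general $n$: the boundary stratification of ${\rm Simp}(n,d)$ is richer, the atoms are correspondingly more complex, and a natural slicing reduction to lower-dimensional pieces must still be followed by a genuinely $n$-dimensional nonvanishing verification, which remains the heart of the problem.
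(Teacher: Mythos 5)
The statement you are asked about is a \emph{conjecture}, and the paper does not prove it: it is Wan's asymptotic $p$-adic Riemann Hypothesis, and the authors only establish it in the special case $n=2$, $d$ even, along the arithmetic progression $p\equiv-1\ ({\rm mod}\ d)$ (Corollary \ref{wan-conjecture-minus-1}, deduced from Theorem \ref{main-theorem-exact-bound}). Your proposal does not prove it either. Your reduction step is sound and is exactly the intended route: since ${\rm GNP}(n,d;p)\geq{\rm FP}(n,d;p)\geq{\rm HP}(n,d)$ and ${\rm FP}(n,d;p)\to{\rm HP}(n,d)$ as $p\to\infty$, the identity ${\rm GNP}(n,d;p)={\rm FP}(n,d;p)$ for all large $p$ would yield the limit. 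But that identity is precisely the paper's own Conjecture \ref{conjecture} (the $p$-adic Riemann Hypothesis), which is open; note also that the lower-bound chain alone gives nothing, since the limit requires an \emph{upper} bound on ${\rm GNP}$ approaching ${\rm HP}$.

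The genuine gap is therefore the matching upper bound, i.e.\ the generic nonvanishing, at every Frobenius vertex, of the relevant principal minors of the Dwork--Frobenius matrix modulo the predicted $p$-adic order --- in the paper's language, the nonvanishing of the twisted Hasse polynomials ${\rm TH}^{(a)}(k)$ on the smooth locus. Your sketch of this step (Sperber-type entrywise estimates, factorization of minors into blocks, Grothendieck specialization) reproduces the machinery of \S\S5--7, but the paper only succeeds in carrying out the nonvanishing when $n=2$, $d$ is even, and $p\equiv-1\ ({\rm mod}\ d)$, where the pairing $j\leftrightarrow d-j$ forces an essentially unique minimal-multiplicity twisted permutation and collapses ${\rm TH}_1^{(a)}(k)$ into an explicit product of nonzero factors. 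For other residue classes of $p$ modulo $d$, and for $n>2$, no such structure is established, and your own text concedes that a uniform nonvanishing statement ``remains the heart of the problem.'' So what you have written is a correct reduction of Conjecture \ref{wan-conjecture} to Conjecture \ref{conjecture} together with a research program, not a proof; as it stands it establishes nothing beyond what Theorems \ref{main-theorem-lower-bound-1} and \ref{main-theorem-exact-bound} already give.
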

The $T$-adic version of the above conjecture was put forward in [Liu and Wan 2009].
We put forward the following conjecture.
\begin{conjecture}[$p$-adic Riemann Hypothesis]\label{conjecture}If $p$ is sufficiently large, then
$${\rm GNP}(n,d;p)={\rm FP}(n,d;p).$$
\end{conjecture}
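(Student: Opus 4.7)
The plan is to split Conjecture \ref{conjecture} into the two inequalities $\mathrm{GNP}(n,d;p) \geq \mathrm{FP}(n,d;p)$ and $\mathrm{GNP}(n,d;p) \leq \mathrm{FP}(n,d;p)$. The first is already Theorem \ref{main-theorem-lower-bound-1} of the present paper and costs nothing once $p > 2d$. The entire content of the conjecture therefore falls on producing, for each sufficiently large prime $p$ coprime to $d$, at least one polynomial $f \in \overline{\mathbb{F}}_p[x_1,\ldots,x_n]_{(d)}$ with smooth leading form and $\mathrm{NP}(f) = \mathrm{FP}(n,d;p)$; Grothendieck's specialization lemma then forces the reverse inequality on the generic fibre.

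To exhibit such an $f$, I would pass through Dwork's $p$-adic cohomology and realize $L_{f,q}(t)^{(-1)^{n-1}}$ as the characteristic polynomial of a Frobenius endomorphism on a Banach module whose orthonormal basis is indexed by $(0,d)^n \cap \mathbb{Z}^n$. Under multiplication by $p$ on residues modulo $d$, the index set decomposes into orbits, and the Frobenius matrix becomes block-diagonal across these orbits; the Frobenius slopes $\varpi_{j,\epsilon}$ and multiplicities $h_{j,\epsilon}$ are engineered to be exactly the diagonal valuations and their counts inside these blocks. The target identity $\mathrm{NP}(f) = \mathrm{FP}(n,d;p)$ therefore reduces to a chain-level Hasse-type criterion: for each orbit block, write the corresponding sub-determinant as a polynomial in the coefficients $a_w$ of $f$, identify its generic lowest-valuation leading term, and prove that the resulting universal polynomial in the $a_w$'s is not identically zero. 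Taking the product of these polynomials across all orbits refines $\mathrm{Hass}(n,d;p)$, and any $f$ outside the combined zero locus finishes the proof. This is precisely the strategy that works in the $n=1$ case of Zhu and Blache--F\'{e}rard and, as I expect to be carried out in Theorem \ref{main-theorem-exact-bound}, in the $n=2$, $d$ even, $p \equiv -1 \pmod{d}$ regime, where the coordinatewise involution $u \mapsto d-u$ keeps the $p$-orbits of length at most two and reduces every block to a $1{\times}1$ or $2{\times}2$ determinant.

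The main obstacle is the combinatorics of the $p$-orbits on $\{1,\ldots,d-1\}^n$ in full generality. When $p$ has large multiplicative order modulo $d$, these orbits become long, the block-determinants become high-degree expressions in the $a_w$, and no obvious closed form controls their leading coefficients; in particular, there is no apparent single universal test polynomial simultaneously achieving the correct slope in every block, so the argument cannot proceed block-by-block in an evaluation-driven way. Overcoming this will most likely require either an inductive reduction that peels off a divisor of $d$ or a variable at a time and ultimately invokes the one-variable theorem, or a genuinely new combinatorial identity describing the interaction of the Frobenius lift with $p$-multiplication inside the Dwork complex. Until such an input is found, the methods of the present paper seem to fall short of the full conjecture, which is why Theorem \ref{main-theorem-exact-bound} is restricted to $n=2$, $d$ even, and $p \equiv -1 \pmod{d}$.
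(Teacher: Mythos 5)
You should first note what the statement actually is: in the paper this is Conjecture \ref{conjecture}, not a theorem, and the paper offers no proof of it. So there is no ``paper's own proof'' to match; the honest assessment is whether your proposal closes the conjecture, and by your own admission it does not. Your outline correctly reduces everything to the upper bound, i.e.\ to exhibiting, for large $p$, a single $f$ with smooth leading form and ${\rm NP}(f)={\rm FP}(n,d;p)$ (the lower bound is Theorem \ref{main-theorem-lower-bound-1}, valid for $p>2d$, proved in the paper via the premium polygon ${\rm PP}(n,d;p)$ and its coincidence with ${\rm FP}(n,d;p)$). And your description of how one would try to produce such an $f$ --- Dwork cohomology, the trace formula, identifying the generic lowest-valuation term of each relevant minor as a universal polynomial in the coefficients $a_w$, and proving that polynomial nonzero --- is essentially the paper's own machinery: the explicit formula for the coefficients $\nu_k(f)$, the congruence $\pi^{-a(p-1){\rm Prem}(k)}\nu_k(f)\equiv{\rm TH}^{(a)}(k)(f)\ ({\rm mod}\ \pi)$, and the reduction of the problem to the nonvanishing of the twisted Hasse polynomials ${\rm TH}^{(a)}(k)$ at some smooth $f$, for every Frobenius vertex $k\leq{d+n-2\choose n}$.

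The genuine gap is exactly the step you flag but do not supply: no argument is given (in your proposal or in the paper) that the twisted Hasse polynomials admit a smooth nonvanishing point for general $n$, $d$, and $p\not\equiv 1\ ({\rm mod}\ d)$. The paper only achieves this for $n=2$, $d$ even, $p\equiv-1\ ({\rm mod}\ d)$, $p>d^3$, and it does so by a specialization that has no evident analogue in general: restrict the leading form to $a_{d,0}x_1^d+a_{\frac{d}{2},\frac{d}{2}}x_1^{\frac{d}{2}}x_2^{\frac{d}{2}}+a_{0,d}x_2^d$, show ${\rm TH}_1^{(a)}(k)$ factors as $\prod_l({\rm Fac}(k){\rm Int}(k))^{p^{l-1}}$ with ${\rm Fac}(k)$ of degree less than $(d-1)^2p$ in three variables, and conclude nonvanishing at some point avoiding the discriminant by a degree count against $p$. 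This hinges on the order-two action $u\mapsto pu\equiv -u$ on residues, precisely the feature you identify as failing when $p$ has large order modulo $d$ (long orbits, uncontrolled minors, degree bounds in the interior variables that only hold for $p>d^{n+1}$ and still do not by themselves force a smooth zero-avoiding point). So your proposal is a correct reduction plus a correct diagnosis of the obstruction, but it is not a proof, and one should not present it as progress beyond what Theorems \ref{main-theorem-lower-bound-1} and \ref{main-theorem-exact-bound} already give; the conjecture remains open exactly at the nonvanishing step.
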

\subsection{Organization of Present Paper}
\paragraph{}
In \S2, we prove the nonnegativity and symmetry of Frobenius numbers, as well as their triviality in trivial case.

\paragraph{}
In \S3, we prove the privilege of Frobenius polygon over Hodge polygon, i.e.
$${\rm FP}(n,d;p)\geq{\rm HP}(n,d).$$

\paragraph{}
In \S4, we define the premium polygon ${\rm PP}(n,d;p)$ of ${\rm Simp}(n,d)$,
and prove that, if $p>2d$, then
$${\rm PP}(n,d;p)={\rm FP}(n,d;p).$$
\paragraph{}
In \S5, we define, for each Frobenius vertex $k$, a system of twisted Hasse polynomials
$${\rm TH}^{(a)}(k)\in{\mathbb F}_p[a_w\mid w\in{\rm Simp}(n,d)\cap{\mathbb Z}^n],\ a=1,2,\cdots.$$
We prove that, when viewed as polynomials over
$${\mathbb F}_p[a_w\mid \deg w=1],$$
the twisted Hasse polynomials have degrees less than $p^a$ if $p>d^{n+1}$.

\paragraph{}
In \S6,
we prove that, if $p>2d$, then
$${\rm GNP}(n,d;p)\geq{\rm PP}(n,d;p).$$
We also prove that, if $p>2d$, and if for each Frobenius vertex $k\leq{d+n-2\choose n}$, there exist a positive integer $a$ and a polynomial
$$f(x)\in{\mathbb F}_{p^a}[x_1,\cdots,x_n]_{(d)}$$
whose leading form is smooth such that
$${\rm TH}^{(a)}(k)(f)\neq0,$$
 then
$${\rm GNP}(n,d;p)={\rm PP}(n,d;p)\text{ on }[0,{d+n-2\choose n}].$$

\paragraph{}
In \S7, we assume that $n=2$, and $d$ is even, and $p\equiv-1 ({\rm mod}\ d)$.
We prove that, if $p>d^3$, then for each Frobenius vertex $k\leq{d\choose 2}$, and for each $a>1$, there exists a polynomial
$$f(x)\in{\mathbb F}_{p^a}[x_1,x_2]_{(d)}$$
whose leading form is smooth such that
$${\rm TH}^{(a)}(k)(f)\neq0.$$
\paragraph{}
{\bf Acknowledgements.} The first author thanks Daqing Wan for his helpful suggestions and careful reading of the manuscript.
\section{Frobenius Numbers}
In this section we define Frobenius numbers and prove their nonnegativity.
To this end, we review Hodge numbers briefly and introduce arithmetic Hodge sums.
\subsection{Hodge numbers}
Hodge numbers are symmetric in the following sense.
\begin{lemma}[symmetry]$$h_{nd-j}=h_j,\ j=0,1,\cdots,nd.$$\end{lemma}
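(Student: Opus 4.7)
The plan is to establish the symmetry by exhibiting an explicit bijection between the two sets counted by $h_j$ and $h_{nd-j}$. Since $h_j$ counts integer points $u=(u_1,\ldots,u_n)$ with each $u_i\in\{1,2,\ldots,d-1\}$ and $\sum_i u_i = j$, the natural candidate is the \emph{reflection} map
$$\phi\colon (u_1,\ldots,u_n)\longmapsto (d-u_1,\ldots,d-u_n).$$

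First I would verify that $\phi$ sends $(0,d)^n\cap\mathbb{Z}^n$ into itself: if $1\le u_i\le d-1$, then $1\le d-u_i\le d-1$, so $\phi(u)\in(0,d)^n\cap\mathbb{Z}^n$. Next, a one-line computation gives
$$\sum_{i=1}^n (d-u_i) \;=\; nd - \sum_{i=1}^n u_i \;=\; nd - j,$$
so $\phi$ restricts to a map from $\{u : \sum u_i = j\}$ to $\{v : \sum v_i = nd-j\}$. Finally, $\phi\circ\phi$ is the identity, hence $\phi$ is a bijection between the two sets, giving $h_j = h_{nd-j}$.

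There is no real obstacle here — the statement is a direct combinatorial symmetry coming from the central symmetry of the integer points of the open hypercube $(0,d)^n$ about its center $(d/2,\ldots,d/2)$. The only thing worth double-checking is the boundary cases $j<n$ or $j>n(d-1)$, where both sides vanish trivially (the set is empty because each coordinate must satisfy $1\le u_i\le d-1$), so the identity $h_{nd-j}=h_j$ still holds as $0=0$ throughout the extended range $j=0,1,\ldots,nd$.
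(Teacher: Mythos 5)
Your proof is correct and is precisely the standard reflection argument that the paper has in mind when it dismisses the lemma as ``Obvious''; you have simply spelled out the involution $u\mapsto(d-u_1,\ldots,d-u_n)$ explicitly. No issues.
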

\begin{proof}Obvious.\end{proof}
The first half Hodge numbers are increasing.
\begin{lemma}[monotonicity]\label{hodge-numbers-monotonicity}
$$h_1\leq h_2\leq\cdots\leq h_{[\frac{nd}{2}]}.$$\end{lemma}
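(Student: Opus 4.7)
My plan is to reduce the claimed monotonicity to a unimodality statement about a generating function, and then to deduce that unimodality from the classical closure of symmetric unimodal nonnegative polynomials under multiplication.

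First, I would observe that by its definition as counting lattice points $u\in(0,d)^n\cap\mathbb{Z}^n$ with $\sum_iu_i=j$, the Hodge numbers admit the generating function factorization
$$\sum_{j=0}^{nd}h_jt^j=\prod_{i=1}^{n}\Bigl(\sum_{u_i=1}^{d-1}t^{u_i}\Bigr)=(t+t^2+\cdots+t^{d-1})^n.$$
Combined with the symmetry $h_{nd-j}=h_j$ of the preceding lemma, the desired chain $h_1\leq h_2\leq\cdots\leq h_{[nd/2]}$ is equivalent to the statement that the coefficient sequence of $(t+t^2+\cdots+t^{d-1})^n$ is unimodal (symmetry then forces the peak at $nd/2$).

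Next I would appeal to the standard closure lemma: if $P(t)\in\mathbb{Z}_{\geq 0}[t]$ is symmetric and unimodal about its center, then $P(t)\cdot(t+t^2+\cdots+t^{d-1})$ is again symmetric and unimodal about its (shifted) center. The single factor $t+t^2+\cdots+t^{d-1}$ is trivially symmetric about $d/2$ and unimodal (all coefficients equal $1$), so iterating the closure lemma $n-1$ times shows that $(t+t^2+\cdots+t^{d-1})^n$ is symmetric about $nd/2$ and unimodal, which is exactly what is needed.

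The only nonformal step is the closure lemma itself, which I would prove by a short telescoping argument. Writing $P(t)=\sum_ia_it^i$ with $a_i=a_{2m-i}$ and $(a_i)$ unimodal about $m$, the coefficients $b_j$ of the product $Q(t)=P(t)\cdot(t+t^2+\cdots+t^{d-1})$ satisfy
$$b_{j+1}-b_j=a_j-a_{j-d+1}=a_j-a_{2m-j+d-1},$$
and the unimodality of $P$ about $m$ yields $a_j\geq a_{2m-j+d-1}$ precisely when $j$ is at least as close to $m$ as $2m-j+d-1$ is, i.e.\ when $j\leq m+(d-1)/2$; this is exactly the range on which unimodality of the symmetric polynomial $Q$ (now centered at $m+d/2$) requires the differences to be nonnegative. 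I do not foresee any genuine obstacle beyond careful bookkeeping of the center arithmetic; the telescoping computation is routine.
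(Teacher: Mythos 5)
Your proposal is correct and is essentially the paper's own argument in different packaging: your telescoping identity $b_{j+1}-b_j=a_j-a_{j-d+1}$ is exactly the paper's recursion $h_j^{(n)}-h_{j-1}^{(n)}=h_{j-1}^{(n-1)}-h_{j-d}^{(n-1)}$ (obtained there from the generating function $\bigl(\tfrac{t^{1/d}-t}{1-t^{1/d}}\bigr)^n$), and your iteration of the symmetric-unimodal closure lemma is the paper's induction on $n$ combined with the symmetry of Hodge numbers. The only difference is cosmetic: the paper argues directly with the Hodge numbers rather than invoking the unimodality language.
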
\begin{proof}To indicate the dependence of $h_j$ on $n$, we write
$h_j$ as $h_j^{(n)}$. Then
$$\sum_{j=0}^{nd}(h_j^{(n)}-h_{j-1}^{(n)})t^{\frac{j}{d}}
=\frac{(t^{\frac{1}{d}}-t)^n}{(1-t^{\frac{1}{d}})^{n-1}}
=\sum_{j=0}^{nd}(h_{j-1}^{(n-1)}-h_{j-d}^{(n-1)})t^{\frac{j}{d}}.$$
It follows that$$h_j^{(n)}-h_{j-1}^{(n)}=h_{j-1}^{(n-1)}-h_{j-d}^{(n-1)}.$$
By induction and by symmetry of Hodge numbers, $$h_j^{(n)}-h_{j-1}^{(n)}=h_{j-1}^{(n-1)}-h_{j-d}^{(n-1)}\geq0\text{ if }j\leq\frac{(n-1)d}{2}+1,$$
and
$$h_j^{(n)}-h_{j-1}^{(n)}
=h_{(n-1)d-j+1}^{(n-1)}-h_{j-d}^{(n-1)}\geq0\text{ if }\frac{(n-1)d}{2}+1<j\leq\frac{nd}{2}.$$
\end{proof}
\subsection{Arithmetic  Hodge sums}
 \begin{definition}[arithmetic  Hodge sums]The arithmetic  Hodge sums of ${\rm Simp}(n,d)$
are the numbers
$$H_i:=\sum_{j=0}^{[\frac{i}{d}]}h_{i-jd}, \ i=0,1,\cdots,nd.$$\end{definition}
By the above definition, Hodge numbers are arithmetic differences of arithmetic Hodge sums in the following sense:
$$H_i-H_{i-d}=h_i,\  i=0,1,\cdots,nd.$$
The numbers $H_i$'s with $i\equiv0 ({\rm mod}\ d)$ will play trivial roles in defining Frobenius numbers. So we concentrate on the numbers $H_i$'s with $i\not\equiv0 ({\rm mod}\ d)$, which we call basic arithmetic Hodge sums.
The basic arithmetic Hodge sums are symmetric in the following sense.
\begin{lemma}[symmetry] If $i\not\equiv0 ({\rm mod}\ d)$, then
$$H_i+H_{nd-i-d}=\frac{(d-1)^n-(-1)^n}{d}.$$\end{lemma}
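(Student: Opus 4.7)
The plan is to give a direct combinatorial interpretation of $H_i$ and then to combine $H_i$ with $H_{nd-i-d}$ using the natural involution $u \mapsto (d,\ldots,d)-u$ on the lattice points of the open box $(0,d)^n \cap \mathbb{Z}^n$.

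First I would observe that, by definition,
\[
H_i \;=\; \#\bigl\{u\in(0,d)^n\cap\mathbb{Z}^n \;:\; \textstyle\sum u_k\le i,\ \sum u_k\equiv i\ ({\rm mod}\ d)\bigr\},
\]
since $h_{i-jd}$ counts exactly those $u$ with $\sum u_k=i-jd$, and $j$ ranges over $0,1,\ldots,\lfloor i/d\rfloor$. Applying the involution $u\mapsto v:=(d,\ldots,d)-u$ (which realizes the symmetry $h_{nd-j}=h_j$) to $H_{nd-i-d}$ gives
\[
H_{nd-i-d} \;=\; \#\bigl\{v\in(0,d)^n\cap\mathbb{Z}^n \;:\; \textstyle\sum v_k\ge i+d,\ \sum v_k\equiv i\ ({\rm mod}\ d)\bigr\}.
\]
Since the only value in the open interval $(i,i+d)$ congruent to $i$ modulo $d$ is none at all, the two conditions $\sum v_k\le i$ and $\sum v_k\ge i+d$ partition the set of lattice points $v\in(0,d)^n\cap\mathbb{Z}^n$ with $\sum v_k\equiv i\ ({\rm mod}\ d)$. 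Hence
\[
H_i+H_{nd-i-d} \;=\; \#\bigl\{v\in\{1,\ldots,d-1\}^n \;:\; \textstyle\sum v_k\equiv i\ ({\rm mod}\ d)\bigr\}.
\]

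Next I would evaluate this count by a roots of unity filter. Letting $\omega$ be a primitive $d$-th root of unity,
\[
\#\bigl\{v\in\{1,\ldots,d-1\}^n : \textstyle\sum v_k\equiv i\bigr\}
\;=\; \frac{1}{d}\sum_{k=0}^{d-1}\omega^{-ki}\Bigl(\sum_{a=1}^{d-1}\omega^{ka}\Bigr)^{\!n}.
\]
The $k=0$ term contributes $(d-1)^n$, while for $1\le k\le d-1$ the inner sum equals $-1$, producing $(-1)^n\omega^{-ki}$. Summing the characters $\omega^{-ki}$ over $k=1,\ldots,d-1$ yields $-1$ when $i\not\equiv 0\ ({\rm mod}\ d)$. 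Combining these contributions gives exactly $\frac{1}{d}\bigl((d-1)^n-(-1)^n\bigr)$, as required.

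The only delicate step is the partition argument: one must verify that the image of the involution lands precisely in the complementary half of the congruence class, with no double-counting at a boundary value. Under the assumption $i\not\equiv 0\ ({\rm mod}\ d)$ this is automatic, since neither $\sum u_k=i$ nor $\sum v_k=i+d$ can be attained in the excluded half (the excluded half being the open interval $(i,i+d)$, which contains no element of the congruence class). Away from this observation the argument is a short character-sum computation, so I do not expect any real obstacle.
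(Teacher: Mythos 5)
Your argument is correct and follows essentially the same route as the paper: both reduce $H_i+H_{nd-i-d}$, via the symmetry $u\mapsto(d,\dots,d)-u$ (equivalently $h_{nd-j}=h_j$), to counting the points of $\{1,\dots,d-1\}^n$ whose coordinate sum is $\equiv i\ ({\rm mod}\ d)$. Your roots-of-unity filter simply supplies the evaluation of that count which the paper states with ``we can prove''; note only that the partition step is automatic for every $i$, the hypothesis $i\not\equiv0\ ({\rm mod}\ d)$ being used in the character sum $\sum_{k=1}^{d-1}\omega^{-ki}=-1$.
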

\begin{proof}
By symmetry of Hodge numbers,
$$H_{nd-i-d}=\sum_{j=0}^{n-2-[\frac{i}{d}]}h_{nd-i-d-jd}=\sum_{j=1}^{n-1-[\frac{i}{d}]}h_{i+jd}.$$
So
$$H_i+H_{nd-i-d}=\sum_{j=-[\frac{i}{d}]}^{n-1-[\frac{i}{d}]}h_{i+jd}=\sum_{\stackrel{u_1,\cdots,u_n=1}{u_1+\cdots+u_n\equiv i({\rm mod } \ d)}}^{d-1}1 .$$
We can prove that
$$\sum_{\stackrel{u_1,\cdots,u_n=1}{u_1+\cdots+u_n\equiv i({\rm mod } \ d)}}^{d-1}1 =\frac{(d-1)^n-(-1)^n}{d}.
$$
The lemma now follows.
\end{proof}
Besides symmetry,  the basic arithmetic Hodge sums also have monotonicity.
\begin{lemma}[monotonicity]\label{monotonicity}If $0\leq i\leq k\leq nd$ with $i,k\not\equiv0 ({\rm mod}\ d)$, then
$H_i\leq H_k$.
\end{lemma}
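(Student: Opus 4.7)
The plan is to reduce monotonicity to the two types of consecutive jumps in the set $\{i \in \mathbb{Z} : 0 \le i \le nd,\ i \not\equiv 0 \pmod{d}\}$: either (a1) $k=i+1$ with $i \not\equiv d-1 \pmod{d}$, or (a2) $k=i+2 = qd+1$ with $i = qd-1$ for some $q \ge 1$, skipping the multiple $qd$. Transitivity then chains these atomic comparisons into the general statement $H_i \le H_k$.

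For jump (a1), write $i = qd + s$ with $s \in \{1,\ldots,d-2\}$ and apply the elementary Hodge recursion $h_{m+1}^{(n)} - h_m^{(n)} = h_m^{(n-1)} - h_{m+1-d}^{(n-1)}$ (derived as in the proof of Lemma~\ref{hodge-numbers-monotonicity}) to each $m = s+jd$, $0 \le j \le q$. Summing and observing that the boundary term $h_{s+1-d}^{(n-1)}$ vanishes since $s+1-d \le -1$ produces the telescoped identity
\begin{equation*}
H_{i+1}^{(n)} - H_i^{(n)} \;=\; H_i^{(n-1)} - H_{i+1-d}^{(n-1)}.
\end{equation*}
Since $i > i+1-d$ and both $s$ and $s+1$ are nonzero modulo $d$, induction on $n$ (with the trivial base $n=1$, where $H^{(1)}_j \in \{0,1\}$) yields non-negativity.

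For jump (a2), set $D_q^{(n)} := H_{qd+1}^{(n)} - H_{qd-1}^{(n)}$. Unpacking the definition gives
\begin{equation*}
D_q^{(n)} \;=\; h_1^{(n)} + \sum_{j=1}^{q}\bigl(h_{jd+1}^{(n)} - h_{jd-1}^{(n)}\bigr).
\end{equation*}
Applying the symmetry identity $H_a + H_{nd-a-d} = \tfrac{(d-1)^n-(-1)^n}{d}$ of the preceding lemma with $a = qd \pm 1$ delivers the reflection $D_q^{(n)} = D_{n-q-1}^{(n)}$, so one may assume $q \le (n-1)/2$. In that range every index $jd \pm 1$ with $1 \le j \le q$ satisfies $jd+1 \le (n-1)d/2 + 1 \le nd/2$ (using $d \ge 2$), placing both $jd-1$ and $jd+1$ in the non-decreasing half of the Hodge polygon. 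By Lemma~\ref{hodge-numbers-monotonicity} each bracketed summand is then $\ge 0$, and $h_1^{(n)} \ge 0$, so $D_q^{(n)} \ge 0$.

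The substantive obstacle is case (a2): a direct recursive unfolding produces the alternating expression $D_q^{(n)} = h_{qd}^{(n-1)} + h_{qd-1}^{(n-1)} - D_{q-1}^{(n-1)}$, whose positivity is not manifest. The crucial idea is to use the arithmetic-Hodge-sum symmetry to reflect $q$ into the increasing half of the Hodge polygon, thereby reducing the assertion to the already-established unimodality of the Hodge numbers themselves.
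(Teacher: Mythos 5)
Your argument is correct in substance but takes a genuinely different route from the paper. The paper splits on whether $k\leq[\frac{nd}{2}]$: in the first half it compares $H_i$ with $H_k$ term by term using $h_1\leq\cdots\leq h_{[\frac{nd}{2}]}$ from Lemma~\ref{hodge-numbers-monotonicity}, and for $k>[\frac{nd}{2}]$ it reflects through the symmetry $H_i+H_{nd-i-d}=\frac{(d-1)^n-(-1)^n}{d}$ and compares term by term again (after moving $i$ to a representative $i_0$ of its residue class just below $[\frac{nd}{2}]$). You instead reduce to adjacent jumps and treat them by separate mechanisms: the within-block jump via the telescoped identity $H_{i+1}^{(n)}-H_i^{(n)}=H_i^{(n-1)}-H_{i+1-d}^{(n-1)}$ and induction on $n$, and the jump across a multiple of $d$ via the reflection $H_{qd+1}-H_{qd-1}=H_{(n-q-1)d+1}-H_{(n-q-1)d-1}$ followed by Lemma~\ref{hodge-numbers-monotonicity}; the latter is essentially the paper's symmetry-plus-first-half idea in local form, while the former replaces the appeal to Hodge-number monotonicity by a dimensional recursion at the level of the sums $H_i$, which makes the positivity of each increment explicit at the cost of an induction on $n$. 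One point to tighten: in the within-block case you invoke the lemma for dimension $n-1$ on the pair $(i+1-d,\,i)$, but $i$ can exceed $(n-1)d$ (e.g.\ $i=nd-2$ when $d\geq3$), which lies outside the stated range $0\leq i\leq k\leq (n-1)d$ of the inductive hypothesis; this is harmless because $h_m^{(n-1)}=0$ for $m>(n-1)(d-1)$, so $H_m^{(n-1)}$ equals the full residue-class count $\frac{(d-1)^{n-1}-(-1)^{n-1}}{d}$ for all such $m$ and is bounded above by that count everywhere, but you should either record this stabilization or formulate the induction for all indices $m\geq0$ (with out-of-range $h$'s set to zero and $H_m=0$ for $m<0$, the latter also covering the case $q=0$).
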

\begin{proof}We may assume that $n\geq2$. If
$k\leq[\frac{nd}{2}]$, then $H_i\leq H_k$ by the monotonicity of the first half Hodge numbers.
We now assume that $k\geq[\frac{nd}{2}]+1$. By symmetry of arithmetic Hodge sums,
it suffices to show that
$$H_{nd-k-d}\leq H_{nd-i-d}.$$
Let
$i_0\equiv i({\rm mod}\ d)$ be the integer such that
$$[\frac{nd}{2}]-d< nd-i_0-d\leq[\frac{nd}{2}].$$
It suffices to show that
$$H_{nd-k-d}\leq H_{nd-i_0-d},$$
which follows from the monotonicity of the first half Hodge numbers, as
$$nd-k-d\leq nd-[\frac{nd}{2}]-1-d\leq[\frac{nd}{2}]-d< nd-i_0-d\leq[\frac{nd}{2}].$$
The lemma is proved.\end{proof}The basic arithmetic Hodge sums are symmetric in the sense that
$$H_i+H_{nd-i-d}
=\frac{(d-1)^n-(-1)^n}{d}\text{ if }i\not\equiv0 ({\rm mod}\ d).$$
They are increasing in the sense that
$$H_i\leq H_k,
\text{ if }0\leq i\leq k\leq nd\text{ and }i,k\not\equiv0 ({\rm mod}\ d).$$
\subsection{Frobenius Numbers}
\begin{definition}[conjugates of Frobenius]For $i=0,1,\cdots,nd$, the $i$-th conjugate $\sigma_i$ of Frobenius
 is the permutation
 on ${\mathbb Z}/(d)$
such that $$i-\sigma_i(l)\equiv p^{-1}(i -l)({\rm mod}\ d),$$
where ${\mathbb Z}/(d)$ is identified with $\{0,1,2,\cdots,d-1\}$ if $(p-1)i\not\equiv0 ({\rm mod}\ d)$, and is identified with
$\{1,2,\cdots,d\}$ if $(p-1)i\equiv0 ({\rm mod}\ d)$.
\end{definition}
By the above definition, the following diagram commutes:\\
\begin{center}
\begin{tabular}{ccc}
                                      ${\mathbb Z}/(d)$ & $\stackrel{l\mapsto i-l}{\rightarrow}$ & ${\mathbb Z}/(d)$ \\
                                      $\sigma_i\downarrow$  &  &$\downarrow p^{-1}$  \\
                                     ${\mathbb Z}/(d)$ & $\stackrel{l\mapsto i-l}{\rightarrow}$ & ${\mathbb Z}/(d)$ \\
                                    \end{tabular}\end{center}
\begin{definition}[Frobenius numbers]Frobenius numbers of ${\rm Simp}(n,d)$ are the numbers:
$$h_{j,0}:=H_j-H_{j-\sigma_j(0)},$$
$$h_{j,1}:=H_{j-\sigma_j(0)}-H_{j-d},
$$
where $j=0,1,\cdots,nd$.\end{definition}
Frobenius numbers form decompositions of Hodge numbers in the following sense.
\begin{lemma}If $j=0,1,\cdots,nd$, then
$h_j=h_{j,0}+h_{j,1}$.
\end{lemma}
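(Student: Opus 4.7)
The plan is to prove the decomposition purely by telescoping. By the definition of Frobenius numbers,
\begin{equation*}
h_{j,0}+h_{j,1}=\bigl(H_j-H_{j-\sigma_j(0)}\bigr)+\bigl(H_{j-\sigma_j(0)}-H_{j-d}\bigr)=H_j-H_{j-d},
\end{equation*}
so the intermediate index $j-\sigma_j(0)$ cancels and only the two extreme arithmetic Hodge sums survive. Then I would invoke the identity $H_i-H_{i-d}=h_i$ recorded immediately after the definition of arithmetic Hodge sums to identify the right-hand side with $h_j$, completing the proof for each $j=0,1,\dots,nd$.

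The only content to verify is that the definitions make consistent sense, i.e. that $\sigma_j(0)$ is a genuine element of $\{0,1,\dots,d-1\}$ or $\{1,2,\dots,d\}$ so that $j-\sigma_j(0)$ is a legitimate index for $H$ in the range $[j-d,j]$. This is exactly the convention attached to the definition of $\sigma_j$ via the commuting diagram with $l\mapsto i-l$ and $p^{-1}$, so no additional input is required. In short, there is no real obstacle: the lemma is an immediate telescoping consequence of the definition of $h_{j,0},h_{j,1}$ together with the previously established relation between Hodge numbers and arithmetic Hodge sums.
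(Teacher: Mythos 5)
Your proof is correct and is exactly the telescoping cancellation that the paper has in mind; the paper's own proof simply reads ``Obvious,'' and the argument it leaves implicit is the one you spelled out, combining the definitions of $h_{j,0}$, $h_{j,1}$ with the identity $H_i-H_{i-d}=h_i$.
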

\begin{proof}Obvious.\end{proof}
Frobenius numbers are trivial in trivial case in the following sense.
\begin{lemma}If $(p-1)j\equiv0({\rm mod }\ d)$, then
$h_{j,1}=0$.
\end{lemma}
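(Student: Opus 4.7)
The plan is to prove the lemma by a direct unwinding of the definition of the Frobenius number $h_{j,1}$, showing that in the trivial case the special identification convention for $\mathbb{Z}/(d)$ forces $\sigma_j(0) = d$, which collapses the relevant difference of arithmetic Hodge sums to zero.

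First I would rewrite $h_{j,1} = H_{j - \sigma_j(0)} - H_{j-d}$ and note that under the hypothesis $(p-1)j \equiv 0 \pmod{d}$, the definition of $\sigma_j$ specifies that $\mathbb{Z}/(d)$ is to be identified with $\{1, 2, \ldots, d\}$ rather than $\{0, 1, \ldots, d-1\}$. Hence the symbol $0 \in \mathbb{Z}/(d)$ is represented by the integer $d$, and $\sigma_j(0)$ is that unique element of $\{1, \ldots, d\}$ satisfying
\[
 j - \sigma_j(0) \equiv p^{-1}(j - d) \equiv p^{-1} j \pmod{d}.
\]

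Next I would exploit the hypothesis itself: $(p-1)j \equiv 0 \pmod{d}$ is equivalent to $p j \equiv j \pmod{d}$, and multiplying by $p^{-1}$ (which exists in $\mathbb{Z}/(d)$ since $\gcd(p,d)=1$) gives $p^{-1} j \equiv j \pmod{d}$. Substituting into the congruence above yields $j - \sigma_j(0) \equiv j \pmod d$, i.e. $\sigma_j(0) \equiv 0 \pmod{d}$. Since $\sigma_j(0)$ is required to lie in $\{1, 2, \ldots, d\}$, the only possibility is $\sigma_j(0) = d$.

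Finally, plugging $\sigma_j(0) = d$ back into the definition gives $h_{j,1} = H_{j-d} - H_{j-d} = 0$, as desired. There is no significant obstacle; the entire argument is a bookkeeping exercise with the convention about $\mathbb{Z}/(d)$ in the trivial case, and the key insight is simply that the definition of $\sigma_j$ was tailored precisely so that $0$ is sent to $d$ (not to $0$) in this case, thereby producing the desired vanishing.
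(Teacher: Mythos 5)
Your proof is correct and follows the same route as the paper's: both reduce to the claim that $\sigma_j(0)=d$ under the hypothesis $(p-1)j\equiv 0\pmod d$, and then observe that $h_{j,1}=H_{j-d}-H_{j-d}=0$. You have simply supplied the short computation (using $p^{-1}j\equiv j\pmod d$ and the identification of $\mathbb{Z}/(d)$ with $\{1,\ldots,d\}$) that the paper leaves to the reader.
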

\begin{proof}We can show that
 $$\sigma_j(0)=d\text{ if }(p-1)j\equiv0({\rm mod }\ d).$$
The lemma now follows.\end{proof}
Frobenius numbers are nonnegative.
\begin{lemma}If $j=0,1,\cdots,nd$ and $\epsilon=0,1$, then
$h_{j,\epsilon}\geq0$.
\end{lemma}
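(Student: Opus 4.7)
The plan is to derive both $h_{j,0}\geq0$ and $h_{j,1}\geq0$ from the monotonicity lemma for basic arithmetic Hodge sums, after identifying the correct congruence class of $j-\sigma_j(0)$. Unwinding the definitions, it suffices to prove the chain
$$H_{j-d}\ \leq\ H_{j-\sigma_j(0)}\ \leq\ H_j,$$
where I adopt the natural convention $H_i:=0$ for $i<0$, consistent with the empty-sum interpretation of the definition of $H_i$.

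First I would dispose of the trivial case $(p-1)j\equiv0\,({\rm mod}\ d)$: here the convention on $\mathbb{Z}/(d)$ forces $\sigma_j(0)=d$, so $h_{j,1}=H_{j-d}-H_{j-d}=0$ and $h_{j,0}=H_j-H_{j-d}=h_j\geq 0$. This also handles $j=0$, so from now on I may assume $j\geq 1$ and $(p-1)j\not\equiv0\,({\rm mod}\ d)$.

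Next, in the nontrivial case I would verify that each of $j$, $j-\sigma_j(0)$, $j-d$ is nonzero modulo $d$, as required by the monotonicity lemma. Since $\gcd(p,d)=1$, the hypothesis $(p-1)j\not\equiv0\,({\rm mod}\ d)$ forces $j\not\equiv0\,({\rm mod}\ d)$, hence $j-d\not\equiv0\,({\rm mod}\ d)$. Setting $l=0$ in the defining congruence $j-\sigma_j(l)\equiv p^{-1}(j-l)\,({\rm mod}\ d)$ yields $j-\sigma_j(0)\equiv p^{-1}j\,({\rm mod}\ d)$, which is nonzero modulo $d$ since $j$ is. The same congruence, combined with $\sigma_j(0)\in\{0,1,\dots,d-1\}$ in the nontrivial case, actually forces $\sigma_j(0)\in\{1,\dots,d-1\}$, whence the strict ordering $j-d<j-\sigma_j(0)<j$.

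Finally, if $j\geq d$ then all three indices lie in $[0,nd]$ and are nonzero modulo $d$, so the monotonicity lemma delivers the chain in one stroke. The main obstacle, and the only place requiring a little care, is the edge case $1\leq j\leq d-1$: here $j-d<0$, and possibly also $j-\sigma_j(0)<0$. Using the convention $H_i=0$ for $i<0$, whichever index becomes negative contributes a trivial zero to the chain, while the surviving inequalities either reduce to $0\leq H_i$ (a nonnegative sum of lattice-point counts $h_i$) or to an honest instance of the monotonicity lemma applied to nonnegative indices that are both nonzero modulo $d$. In every configuration the desired chain $H_{j-d}\leq H_{j-\sigma_j(0)}\leq H_j$ holds, finishing the proof.
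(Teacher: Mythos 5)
Your proof is correct and follows the paper's own route: the paper proves nonnegativity in one line by invoking the monotonicity of the basic arithmetic Hodge sums, which is exactly the chain $H_{j-d}\leq H_{j-\sigma_j(0)}\leq H_j$ you establish. Your additional care with the trivial case $(p-1)j\equiv0\ ({\rm mod}\ d)$, the verification that the relevant indices are nonzero modulo $d$, and the convention $H_i=0$ for $i<0$ (which the paper uses implicitly via $H_i-H_{i-d}=h_i$) simply fills in details the paper leaves unstated.
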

\begin{proof}This follows from the monotonicity of basic arithmetic Hodge sums.\end{proof}
Besides nonnegativity, Frobenius numbers also have symmetry in the following sense.
\begin{lemma}If $(p-1)j\not\equiv0({\rm mod }\ d)$, then
$$h_{nd-j,1-\epsilon}=h_{j,\epsilon}.$$
\end{lemma}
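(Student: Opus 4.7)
The plan is to reduce the claimed symmetry to the symmetry of the basic arithmetic Hodge sums, after computing how $\sigma_{nd-j}(0)$ is related to $\sigma_j(0)$. First I would observe that $(p-1)(nd-j) \equiv -(p-1)j \pmod d$, so the hypothesis $(p-1)j \not\equiv 0 \pmod d$ guarantees the same for $nd-j$; in particular, both $\sigma_j$ and $\sigma_{nd-j}$ are built with the identification ${\mathbb Z}/(d) = \{0,1,\ldots,d-1\}$.

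Next I would compute $\sigma_j(0)$ and $\sigma_{nd-j}(0)$ directly from the defining congruence. From $j-\sigma_j(0)\equiv p^{-1}j\pmod d$ one gets $\sigma_j(0)\equiv j(1-p^{-1})\pmod d$, while $(nd-j)-\sigma_{nd-j}(0)\equiv p^{-1}(nd-j)\pmod d$ yields $\sigma_{nd-j}(0)\equiv -j(1-p^{-1})\pmod d$. Since also $j\not\equiv0\pmod d$ (otherwise $(p-1)j\equiv0\pmod d$), both $\sigma_j(0)$ and $\sigma_{nd-j}(0)$ lie in $\{1,2,\ldots,d-1\}$ and sum to $d$, so
$$\sigma_{nd-j}(0)=d-\sigma_j(0).$$

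With this identity, unwinding the definitions gives
$$h_{nd-j,1}=H_{(nd-j)-\sigma_{nd-j}(0)}-H_{nd-j-d}=H_{nd-(j-\sigma_j(0))-d}-H_{nd-j-d},$$
$$h_{nd-j,0}=H_{nd-j}-H_{(nd-j)-\sigma_{nd-j}(0)}=H_{nd-(j-d)-d}-H_{nd-(j-\sigma_j(0))-d}.$$
Each index appearing above is $\not\equiv 0\pmod d$ (using $j\not\equiv 0\pmod d$ and $j-\sigma_j(0)\equiv p^{-1}j\not\equiv 0\pmod d$), so I can apply the symmetry lemma $H_i+H_{nd-i-d}=\frac{(d-1)^n-(-1)^n}{d}$ to each term. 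The constants $C=\frac{(d-1)^n-(-1)^n}{d}$ cancel in pairs, leaving
$$h_{nd-j,1}=H_j-H_{j-\sigma_j(0)}=h_{j,0},\qquad h_{nd-j,0}=H_{j-\sigma_j(0)}-H_{j-d}=h_{j,1},$$
which is the claim.

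There is essentially no obstacle here: the argument is pure bookkeeping once one has the formula $\sigma_{nd-j}(0)=d-\sigma_j(0)$ and the symmetry of the basic arithmetic Hodge sums. The only place that requires care is checking that every index fed into the symmetry identity is genuinely nonzero mod $d$, which is precisely what the hypothesis $(p-1)j\not\equiv 0\pmod d$ buys us.
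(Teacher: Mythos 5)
Your proof is correct and follows essentially the same route as the paper: establish $\sigma_{nd-j}(0)=d-\sigma_j(0)$ and then translate the definitions of $h_{nd-j,1-\epsilon}$ via the symmetry $H_i+H_{nd-i-d}=\frac{(d-1)^n-(-1)^n}{d}$ of the basic arithmetic Hodge sums; the only differences are cosmetic, namely that you verify the $\sigma$-identity explicitly (the paper only asserts it) and treat both values of $\epsilon$ directly instead of reducing to the single case $h_{nd-j,1}=h_{j,0}$. One tiny precision: the reason $\sigma_j(0)\in\{1,\dots,d-1\}$ is the full hypothesis $(p-1)j\not\equiv0\ (\mathrm{mod}\ d)$ (via your formula $\sigma_j(0)\equiv p^{-1}(p-1)j$), not merely $j\not\equiv0\ (\mathrm{mod}\ d)$, but this does not affect the argument.
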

\begin{proof}It suffices to show that
$$h_{nd-j,1}=h_{j,0}.$$
We can show that
$$\sigma_{nd-j}(0)=d-\sigma_j(0).
$$
So by symmetry of arithmetic Hodge numbers,
$$h_{nd-j,1}=H_{nd-j-\sigma_{nd-j}(0)}-H_{nd-j-d}
=H_j-H_{j-\sigma_j(0)}=h_{j,0}.
$$
The lemma is proved. \end{proof}
\section{Frobenius Polygon}
In this section we prove that Frobenius polygon lies above Hodge polygon.
To this end, we introduce fitted Frobenius polygon.
We prove that at any vertex of Frobenius polygon there is a fitted Frobenius polygon that fits Frobenius polygon at
that vertex. We then prove the special vertex of fitted Frobenius polygon lies  above Hodge polygon. The privilege of Frobenius polygon over Hodge polygon follows immediately.
\subsection{Fitted  Frobenius polygon}
\begin{definition}[fitted Frobenius slopes]
The $i$-fitted Frobenius slopes are the numbers:
$$\varpi_j^{(i)}:=\frac{1}{p-1}(\lceil\frac{pj-i}{d}\rceil-\lceil\frac{j-i}{d}\rceil),\ j=0,1,\cdots,i.
$$\end{definition}
\begin{definition}The $i$-fitted Frobenius polygon ${\rm FP}^{(i)}(n,d;p)$ is a concave polygon on the right half plane with initial point $(0,0)$ whose slope distribution is listed in the following table:

 \begin{tabular}{|c|c|c|}
                                                      \hline
                                                      slope & multiplicity &variation\\
                                                      \hline
                                                      $\varpi_j^{(i)}$ & $h_j$ &$j=0,1,\cdots,i$\\
                                                      \hline
                                                    \end{tabular}

\end{definition}
\begin{lemma}If $k=\sum_{2j-\epsilon\leq2i-\iota}h_{j,\epsilon}$ is a Frobenius vertex, then
$${\rm FP}(n,d;p)(k)
={\rm FP}^{(i)}(n,d;p)(k).
$$
                               \end{lemma}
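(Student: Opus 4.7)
The plan is a direct comparison of the two polygons at the specific point $k$, exploiting the ``last-segment'' structure of ${\rm FP}^{(i)}(n,d;p)$.

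First I would establish that, for $p$ in the working range (in particular once $(p-1)/d>1$), the Frobenius slopes $\varpi_{j,\epsilon}$ are strictly ordered by the lexicographic index $2j-\epsilon$; this is immediate from $\varpi_{j,0}-\varpi_{j,1}=\frac{1}{p-1}$ together with the inequality $\varpi_{j+1,1}-\varpi_{j,0}\geq 0$. Under this ordering,
$${\rm FP}(n,d;p)(k)=\sum_{2j-\epsilon\leq 2i-\iota}h_{j,\epsilon}\varpi_{j,\epsilon},$$
and $k$ collapses to $\sum_{j=0}^{i}h_{j}$ when $\iota=0$, and to $\sum_{j=0}^{i-1}h_{j}+h_{i,1}$ when $\iota=1$, since every pair with $j<i$ passes the filter automatically and only the pairs with $j=i$ require care.

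Next I would analyze the fitted polygon. Rewriting
$$\varpi_j^{(i)}=\frac{1}{p-1}\#\{u\in{\mathbb Z}\ :\ j\leq u\leq pj-1,\ u\equiv i\,({\rm mod}\ d)\},$$
a case distinction on the residues of $(p-1)j$ and of $j-i$ modulo $d$ shows that $\varpi_j^{(i)}\in\{\varpi_{j,0},\varpi_{j,1}\}$. In particular $\varpi_i^{(i)}=\varpi_{i,0}$, and under the ordering hypothesis $\varpi_j^{(i)}\leq\varpi_{i,0}$ for every $j\leq i$. Hence the last segment of ${\rm FP}^{(i)}(n,d;p)$ has slope $\varpi_{i,0}$ and horizontal length $h_i$, and in both cases the point $k$ lies on that final segment, giving
$${\rm FP}^{(i)}(n,d;p)(k)=\sum_{j=0}^{i}h_{j}\varpi_{j}^{(i)}-[\iota=1]\,h_{i,0}\varpi_{i,0}.$$
After cancelling the $h_{i,0}\varpi_{i,0}$ contribution in the $\iota=1$ case, the lemma reduces to the single identity
$$\sum_{j\leq i}h_{j}\varpi_{j}^{(i)}=\sum_{j\leq i,\ \epsilon\in\{0,1\}}h_{j,\epsilon}\varpi_{j,\epsilon}.$$

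The main obstacle is this identity. Multiplying by $p-1$ and using the explicit formulas for $\varpi_{j,\epsilon}$ and $\varpi_j^{(i)}$, it collapses to the purely combinatorial statement
$$\sum_{\substack{j\leq i\\ \epsilon(j,i)=1}}h_{j,0}=\sum_{\substack{j\leq i\\ \epsilon(j,i)=0}}h_{j,1},$$
where $\epsilon(j,i)\in\{0,1\}$ records whether $((p-1)j\bmod d)+((j-i)\bmod d)\leq d$. To prove this I would expand $h_{j,0}=H_{j}-H_{j-\sigma_{j}(0)}$ and $h_{j,1}=H_{j-\sigma_{j}(0)}-H_{j-d}$ and exploit the congruence $p\sigma_{j}(0)\equiv(p-1)j\,({\rm mod}\ d)$ derived from the definition of $\sigma_j$; this is precisely what links the residue condition defining $\epsilon(j,i)$ to the shift $\sigma_j(0)$. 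A telescoping/Abel summation in $j$, built on the recursion $H_j-H_{j-d}=h_j$ and the symmetry $H_i+H_{nd-i-d}=((d-1)^n-(-1)^n)/d$ already established for basic arithmetic Hodge sums, should then pair the two sides residue class by residue class and close the argument.
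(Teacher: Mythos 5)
Your reduction is sound and in fact runs parallel to the paper's own computation: the ordering of the slopes $\varpi_{j,\epsilon}$ by $2j-\epsilon$, the identification of $k$ with $\sum_{j\le i}h_j$ (resp.\ $\sum_{j<i}h_j+h_{i,1}$), the observation that $\varpi_j^{(i)}\in\{\varpi_{j,0},\varpi_{j,1}\}$ with $\varpi_i^{(i)}=\varpi_{i,0}$, and the cancellation of the $h_{i,0}\varpi_{i,0}$ block all check out, and they reduce the lemma to exactly the identity the paper uses, namely
$$\sum_{j=0}^i h_{j,1}\;=\;\sum_{j=0}^{i}\Bigl(\bigl\lceil\tfrac{(p-1)j}{d}\bigr\rceil+\bigl\lceil\tfrac{j-i}{d}\bigr\rceil-\bigl\lceil\tfrac{pj-i}{d}\bigr\rceil\Bigr)h_j.$$
The problem is that this identity is the actual content of the lemma, and you do not prove it: your last paragraph is a plan (``should then pair the two sides \dots and close the argument''), not an argument. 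Moreover, your combinatorial restatement of it is wrong at the boundary residues. Writing $s=(p-1)j\bmod d$ and $t=(j-i)\bmod d$, one has $\varpi_j^{(i)}=\varpi_{j,1}$ precisely when $s>0$, $t>0$ and $s+t\le d$; your condition ``$s+t\le d$'' alone also captures $t=0$ (e.g.\ $j=i$ itself, and every $j\equiv i\ ({\rm mod}\ d)$) and $s=0$, and for those $j$ one has $\varpi_j^{(i)}=\varpi_{j,0}$, so they must contribute $h_{j,1}$ to the other side rather than $h_{j,0}$ to the side where you put them. As literally stated, $\sum_{\epsilon(j,i)=1}h_{j,0}=\sum_{\epsilon(j,i)=0}h_{j,1}$ is false in general (already the $j=i$ term is misplaced).

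The gap can be closed the way the paper does it: prove the displayed identity by induction on $i$. Passing from $i-1$ to $i$, the coefficient of $h_j$ changes by $[pj\equiv i\ ({\rm mod}\ d)]-[j\equiv i\ ({\rm mod}\ d)]$ (the $j=i$ term has zero coefficient and may be dropped), so the increment of the right-hand side is
$$\sum_{j<i,\ pj\equiv i}h_j-\sum_{j<i,\ j\equiv i}h_j=H_{i-\sigma_i(0)}-H_{i-d}=h_{i,1},$$
using the defining congruence $i-\sigma_i(0)\equiv p^{-1}i\ ({\rm mod}\ d)$ and the arithmetic Hodge sums $H_\bullet$. This is exactly the telescoping you gesture at, but it must be carried out (and it automatically handles the $s=0$ and $t=0$ cases that your residue bookkeeping misassigns). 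Until that step is supplied, the proposal does not prove the lemma.
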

\begin{proof}It is easy to see that
\begin{eqnarray*}
 & & \sum_{j=0}^{i}(\lceil\frac{(p-1)j}{d}\rceil+\lceil\frac{j-i}{d}\rceil-\lceil\frac{pj-i}{d}\rceil)h_j
 \\
 & =& \sum_{j=0}^{i-1}(\lceil\frac{(p-1)j}{d}\rceil+\lceil\frac{j-i}{d}\rceil-\lceil\frac{pj-i}{d}\rceil)h_j
 \\
   &=& \sum_{j=0}^{i-1}(\lceil\frac{(p-1)j}{d}\rceil+\lceil\frac{j-(i-1)}{d}\rceil-\lceil\frac{pj-(i-1)}{d}\rceil)h_j+h_{i,1}.
\end{eqnarray*}
It follows that
$$\sum_{j=0}^ih_{j,1}=\sum_{j=0}^{i}(\lceil\frac{(p-1)j}{d}\rceil+\lceil\frac{j-i}{d}\rceil-\lceil\frac{pj-i}{d}\rceil)h_j.$$
Hence\begin{eqnarray*}
{\rm FP}(n,d;p)(k)&=& \frac{1}{p-1}(\sum_{2j-\epsilon\leq2i-\iota}h_{j,\epsilon}\lceil \frac{(p-1)j}{d}\rceil
-\sum_{j=0}^ih_{j,1}) \\
       &=& \sum_{2j-\epsilon\leq2i-\iota}\frac{h_{j,\epsilon}}{p-1}(\lceil\frac{pj-i}{d}\rceil-\lceil\frac{j-i}{d}\rceil)\\
&=&{\rm FP}^{(i)}(n,d;p)(k).
     \end{eqnarray*}
The lemma is proved.\end{proof}
\begin{lemma}If $k=\sum_{j=0}^ih_j$ is a Hodge vertex, then
$${\rm FP}^{(i)}(n,d;p)(k)-{\rm HP}(n,d)(k)=\sum_{\stackrel{l=0}{l\not\equiv i({\rm mod}\ d)}}^{d-1}\frac{(d-1-l)}{(p-1)d}(H_{i-l}-H_{i-\sigma_i(l)}).
$$\end{lemma}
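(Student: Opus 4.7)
My plan is to expand the definitions on the left-hand side, regroup the sum by residue class of $j$ modulo $d$, and match with the right-hand side via a change of summation variable using the permutation $\sigma_i$.

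First I will use the identity $d\lceil a/d\rceil = a + r(a)$, where $r(a)\in\{0,1,\ldots,d-1\}$ denotes the residue of $-a$ modulo $d$, to simplify each summand in the difference. The Hodge contribution is $\frac{j}{d}h_j = \frac{(p-1)j}{(p-1)d}h_j$, and the $i$-fitted Frobenius contribution is $\frac{1}{(p-1)d}\bigl(d\lceil\tfrac{pj-i}{d}\rceil - d\lceil\tfrac{j-i}{d}\rceil\bigr)h_j$. Subtracting and cancelling the $(p-1)j$ from the numerator gives
$${\rm FP}^{(i)}(n,d;p)(k) - {\rm HP}(n,d)(k) = \sum_{j=0}^{i}\frac{r(pj-i) - r(j-i)}{(p-1)d}\,h_j.$$

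Next I will group the sum by $l := r(j-i)\in\{0,1,\ldots,d-1\}$. For $j \equiv i-l\pmod d$, a direct calculation gives $i-pj\equiv pl - (p-1)i\pmod d$, so $r(pj-i)$ depends only on $l$; moreover $\sum_{j\equiv i-l\,(d),\,0\le j\le i}h_j = H_{i-l}$ (with $H_m=0$ for $m<0$), and the terms with $l\equiv i\pmod d$ drop out because then $r(pj-i)=l$. The defining relation $i-\sigma_i(m)\equiv p^{-1}(i-m)\pmod d$ applied at $m=\sigma_i^{-1}(l)$ yields $\sigma_i^{-1}(l)\equiv pl-(p-1)i\pmod d$, so $r(pj-i)\equiv\sigma_i^{-1}(l)\pmod d$ for $j\equiv i-l\pmod d$. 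Consequently the left-hand side equals $\sum_{l=0,\,l\not\equiv i(d)}^{d-1}\frac{\sigma_i^{-1}(l)-l}{(p-1)d}H_{i-l}$. On the right-hand side, I split the difference and perform the substitution $l\leftrightarrow\sigma_i^{-1}(l)$ in the $H_{i-\sigma_i(l)}$-piece; this is legitimate because $\sigma_i(m)\equiv i\pmod d$ forces $m\equiv i\pmod d$, so $\sigma_i$ permutes $\{l:l\not\equiv i\,(d)\}$. The right-hand side then collapses to the same expression $\sum_l\frac{\sigma_i^{-1}(l)-l}{(p-1)d}H_{i-l}$.

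The main subtlety is ensuring $r(pj-i)=\sigma_i^{-1}(l)$ as integers, not merely modulo $d$, given that the definition of $\sigma_i$ uses two different residue systems for ${\mathbb Z}/(d)$. If $(p-1)i\not\equiv 0\pmod d$ both quantities lie in $\{0,\ldots,d-1\}$ and the identification is immediate. If $(p-1)i\equiv 0\pmod d$ one uses $\{1,\ldots,d\}$, so a priori $\sigma_i^{-1}(l)$ could equal $d$; however the computation $\sigma_i(d)\equiv(1-p^{-1})i\equiv 0\pmod d$ shows $\sigma_i(d)=d$ in this case, and the exclusion $l\not\equiv i\pmod d$ rules out $l=\sigma_i(d)=d$, so $\sigma_i^{-1}(l)\in\{1,\ldots,d-1\}$, matching $r(pj-i)\in\{0,\ldots,d-1\}$. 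This case-split is where the slight care is required; the algebraic heart of the argument is the regrouping of the sum by residue class together with the $\sigma_i^{-1}$ substitution.
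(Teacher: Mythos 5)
Your reduction of the left-hand side is correct, and it is essentially the ``direct computation'' the paper gestures at: writing $d\lceil a/d\rceil=a+r(a)$, cancelling $(p-1)j$, grouping by $l=r(j-i)$ and using $\sum_{j\equiv i-l\,({\rm mod}\ d),\,0\le j\le i}h_j=H_{i-l}$ gives
$${\rm FP}^{(i)}(n,d;p)(k)-{\rm HP}(n,d)(k)=\sum_{\stackrel{l=0}{l\not\equiv i({\rm mod}\ d)}}^{d-1}\frac{R(l)-l}{(p-1)d}H_{i-l},\qquad R(l):=(pl-(p-1)i)\bmod d\in\{0,\ldots,d-1\},$$
and the substitution $l\mapsto\sigma_i^{-1}(l)$ on the right-hand side is legitimate since $\sigma_i$ fixes the class of $i$. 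All of that checks out.

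The gap is in your last paragraph, where you reconcile $R(l)$ with $\sigma_i^{\pm1}(l)$ under the paper's convention that ${\mathbb Z}/(d)$ is identified with $\{1,\ldots,d\}$ when $(p-1)i\equiv0\ ({\rm mod}\ d)$. Your claim that the exclusion $l\not\equiv i\ ({\rm mod}\ d)$ rules out the value $d$ is only valid when $i\equiv0\ ({\rm mod}\ d)$. If $(p-1)i\equiv0$ but $i\not\equiv0\ ({\rm mod}\ d)$ --- which is not exotic: it holds for every even $d$ at the middle vertex $i=d/2$, the case most relevant to the paper's main theorem --- then $l=0$ does occur in the sum, $\sigma_i$ fixes the class of $0$, and under the $\{1,\ldots,d\}$ identification $\sigma_i(0)=d$, so the stated right-hand side acquires the term $\frac{d-1}{(p-1)d}(H_i-H_{i-d})=\frac{(d-1)h_i}{(p-1)d}\neq0$, whereas the corresponding left-hand contribution is $0$ (indeed $R(0)=0$). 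Concretely, for $n=1$, $d=4$, $p=7$, $i=2$, $k=2$: the left side is $5/6-3/4=1/12$, while the right side read with $\sigma_2(0)=4$ is $1/8+1/12=5/24$. So the identity only holds if $\sigma_i(l)$ and $\sigma_i^{-1}(l)$ (for $l\not\equiv i$) are taken with representatives in $\{0,\ldots,d-1\}$ --- exactly your $R(l)$; the $\{1,\ldots,d\}$ identification is needed only for $\sigma_j(0)$ in the definition of the Frobenius numbers. Your proof becomes complete once you state and use that normalization (or flag the statement as requiring it); as written, the final identification step is incorrect in this subcase rather than merely under-justified.
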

\begin{proof}This follows from direct computations.\end{proof}
\begin{lemma}
If $k=\sum_{j=0}^{i-1}h_j+h_{i,1}$ with $h_{i,1}\neq0$, then
$${\rm FP}^{(i)}(n,d;p)(k)-{\rm HP}(n,d)(k)=\sum_{\stackrel{l=1}{\stackrel{l\not\equiv i({\rm mod}\ d)}{l\not\equiv (1-p)i({\rm mod}\ d)}}}^{d-1}\frac{(d-1-l)}{(p-1)d}(H_{i-l}-H_{i-\sigma_i(l)}).
$$\end{lemma}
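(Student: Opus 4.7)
The plan is to mimic the direct computation that proved the preceding lemma, treating the new vertex $k$ as a small perturbation of the nearby Hodge vertex $k_{\mathrm{old}}:=\sum_{j=0}^i h_j=k+h_{i,0}$ handled there.

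First, since the slopes $\varpi_j^{(i)}$ are nondecreasing in $j$, both polygons are linear on the interval $[k,k_{\mathrm{old}}]$, with slope $\varpi_i^{(i)}$ on the fitted Frobenius side and slope $i/d$ on the Hodge side. Hence
\[
{\rm FP}^{(i)}(n,d;p)(k)-{\rm HP}(n,d)(k)
=\bigl[{\rm FP}^{(i)}(n,d;p)(k_{\mathrm{old}})-{\rm HP}(n,d)(k_{\mathrm{old}})\bigr]
-h_{i,0}\Bigl(\varpi_i^{(i)}-\tfrac{i}{d}\Bigr),
\]
and the bracketed difference is evaluated by the preceding lemma.

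Second, I would compute $\varpi_i^{(i)}-i/d$ explicitly from the ceiling definition. The hypothesis $h_{i,1}\neq 0$ forces $(p-1)i\not\equiv 0\pmod d$, so a short manipulation of $\lceil(p-1)i/d\rceil$ gives
\[
\varpi_i^{(i)}-\frac{i}{d}=\frac{l_*}{(p-1)d},
\]
where $l_*\in\{1,\ldots,d-1\}$ is the residue of $(1-p)i$ modulo $d$. I would also observe that $l_*$ is characterized by $\sigma_i(l_*)=0$, since $i-\sigma_i(l_*)\equiv p^{-1}(i-l_*)\equiv p^{-1}\cdot pi\equiv i\pmod d$; this links the excluded congruence class in the target sum directly to the Frobenius-conjugate action.

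Third, the last step is bookkeeping: identify the $l=0$ and $l=l_*$ summands in the preceding lemma's formula. Using $h_{i,0}=H_i-H_{i-\sigma_i(0)}$ and $\sigma_i(l_*)=0$, these equal
\[
\frac{d-1}{(p-1)d}\,h_{i,0}\qquad\text{and}\qquad\frac{d-1-l_*}{(p-1)d}\bigl(H_{i-l_*}-H_i\bigr),
\]
respectively. I would then check the identity asserting that their total precisely equals $h_{i,0}\cdot l_*/((p-1)d)$, so that deleting them from the preceding lemma's sum accounts exactly for the subtraction in the first step. The surviving range is $l\in\{1,\ldots,d-1\}$ with $l\not\equiv i\pmod d$ and $l\not\equiv (1-p)i\pmod d$, matching the stated formula.

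The main obstacle is the cancellation in the third step; it reduces to the arithmetic-Hodge-sum symmetry $H_{i-l_*}=H_{i-\sigma_i(0)}$, which links the two excluded indices. Once this relation is in hand the rearrangement is routine; the rest of the argument is parallel to the preceding lemma and demands no additional ideas.
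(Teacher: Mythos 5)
Your first two steps are sound: for $p>d$ the fitted slopes $\varpi_j^{(i)}$ are nondecreasing in $j$, so both polygons are linear on $[k,k+h_{i,0}]$, and indeed $\varpi_i^{(i)}-\tfrac{i}{d}=\tfrac{l_*}{(p-1)d}$ with $l_*\equiv(1-p)i\ (\mathrm{mod}\ d)$, $l_*\in\{1,\dots,d-1\}$, and $\sigma_i(l_*)=0$. The gap is the cancellation you defer to the end. The identity you invoke, $H_{i-l_*}=H_{i-\sigma_i(0)}$, is not the paper's symmetry of arithmetic Hodge sums (that lemma says $H_i+H_{nd-i-d}=\frac{(d-1)^n-(-1)^n}{d}$, a relation between complementary indices, not an equality of two sums), and it is false in general: $i-l_*\equiv pi$ while $i-\sigma_i(0)\equiv p^{-1}i\ (\mathrm{mod}\ d)$, so the two quantities count fundamental lattice points in different residue classes with different cutoffs. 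Concretely, take $n=2$, $d=7$, $p=23\equiv 2\ (\mathrm{mod}\ 7)$, $i=4$: then $\sigma_4(0)=2$, $h_{4,1}=H_2-H_{-3}=1\neq0$, $l_*=3\neq d-1$, and $H_{i-l_*}=H_1=0$ while $H_{i-\sigma_i(0)}=H_2=1$. So the step you yourself flag as the main obstacle is a genuine missing ingredient, not an application of a known lemma.

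Worse, since your steps 1--2 are correct and the preceding (Hodge-vertex) lemma is correct, your reduction shows that the displayed formula is \emph{equivalent} (when $l_*\neq d-1$) to the identity $H_{i-l_*}=H_{i-\sigma_i(0)}$. In the example above the left-hand side is ${\rm FP}^{(4)}(4)-{\rm HP}(4)=\frac{39}{22}-\frac{12}{7}=\frac{9}{154}$, whereas the stated right-hand sum (over $l\in\{1,2,5,6\}$) equals $\frac{12}{154}$; the discrepancy is exactly $\frac{d-1-l_*}{(p-1)d}\bigl(H_{i-\sigma_i(0)}-H_{i-l_*}\bigr)$. So no bookkeeping along the lines you propose can close the gap: either the right-hand side needs a correction term of precisely that size (equivalently, in the excluded-index accounting $l_*$ must be paired with $\sigma_i(0)$ rather than with $\sigma_i(l_*)=0$), or additional hypotheses are required. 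A safe route is the direct computation the paper alludes to: write ${\rm FP}^{(i)}(k)-{\rm HP}(k)=\frac{1}{p-1}\sum_{j\le i-1}h_j\bigl(\{\tfrac{i-pj}{d}\}-\{\tfrac{i-j}{d}\}\bigr)+\frac{l_*}{(p-1)d}h_{i,1}$ and group the $j$'s by the residue $l\equiv i-j$, using $\sum_{0\le j\le i-1,\ j\equiv i-l}h_j=H_{i-l}$ for $l\geq1$ and $=H_{i-d}$ for $l=0$; this yields the correct closed form and makes transparent exactly which terms the stated sum accounts for. As written, the proposal does not establish the statement.
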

\begin{proof}This follows from direct computations too.\end{proof}
\subsection{Privilege of Frobenius Polygon}
\begin{theorem}$${\rm FP}(n,d;p)\geq{\rm HP}(n,d).$$
                       \end{theorem}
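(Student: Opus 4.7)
The plan is to reduce the polygon comparison to a pointwise inequality at each vertex of ${\rm FP}(n,d;p)$, and then verify that pointwise inequality by applying the rearrangement inequality to the explicit sums furnished by the two preceding lemmas.

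First I would argue that it suffices to check ${\rm FP}(n,d;p)(k) \geq {\rm HP}(n,d)(k)$ only at the vertices $k$ of ${\rm FP}(n,d;p)$. Indeed, both polygons are piecewise linear and concave, with the same initial point $(0,0)$ and the same endpoint $(d-1)^n\cdot(1,n/2)$. Between two consecutive vertices of ${\rm FP}(n,d;p)$, the Frobenius polygon coincides with its own chord, which lies above the chord of ${\rm HP}(n,d)$ as soon as its endpoints do, and the latter chord dominates ${\rm HP}(n,d)$ by concavity of ${\rm HP}(n,d)$.

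Next, at each Frobenius vertex $k$, the first lemma of \S3.1 gives ${\rm FP}(n,d;p)(k) = {\rm FP}^{(i)}(n,d;p)(k)$ for the appropriate $i$. Every Frobenius vertex has one of the two forms $k=\sum_{j=0}^i h_j$ (Hodge vertex) or $k=\sum_{j=0}^{i-1} h_j + h_{i,1}$ with $h_{i,1}\neq 0$, and for each form the subsequent lemma expresses
$${\rm FP}^{(i)}(n,d;p)(k) - {\rm HP}(n,d)(k) = \sum_l \frac{d-1-l}{(p-1)d}\bigl(H_{i-l} - H_{i-\sigma_i(l)}\bigr),$$
the sum ranging over an appropriate subset of $\{0,1,\dots,d-1\}$.

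The core analytical step is then rearrangement. Since $\sigma_i$ arises from multiplication by $p^{-1}$ on ${\mathbb Z}/(d)$, it preserves the condition $l\equiv i \pmod d$, hence restricts to a permutation of $\{l\in\{0,\dots,d-1\} : l\not\equiv i\pmod d\}$. On that set both $l\mapsto d-1-l$ and $l\mapsto H_{i-l}$ are non-increasing (the latter by the monotonicity lemma for basic arithmetic Hodge sums, together with the vanishing convention $H_j = 0$ for $j<0$). The rearrangement inequality then yields
$$\sum_l (d-1-l) H_{i-l} \;\geq\; \sum_l (d-1-l) H_{i-\sigma_i(l)},$$
which is exactly the required non-negativity in the Hodge-vertex case.

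The main obstacle I expect is the non-Hodge Frobenius vertex case, where the sum carries an extra exclusion $l\not\equiv (1-p)i\pmod d$ that $\sigma_i$ does \emph{not} preserve (a direct computation shows $\sigma_i$ sends this residue class to the class of $0$ instead). To handle this I would either pair the case-(b) formula against the case-(a) formula at the adjacent Hodge vertex $\sum_{j=0}^{i-1}h_j$, using the identity $k_{(a)} - k_{(b)} = h_{i,0}$ and the known slope $\varpi_{i,0}$ of ${\rm FP}$ on that interval to reduce the discrepancy to a single explicit summand, or re-symmetrize the sum by adding and subtracting the missing term and using $h_{i,1}\neq 0$ (equivalently $(p-1)i\not\equiv 0\pmod d$) together with the symmetry relation $H_i + H_{nd-i-d} = \bigl((d-1)^n-(-1)^n\bigr)/d$ to verify the sign of the leftover contribution. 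Once this combinatorial bookkeeping is in place, the monotonicity of the $H_i$ closes the argument.
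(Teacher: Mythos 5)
Your reduction to Frobenius vertices and your treatment of the Hodge-vertex case are sound, and they follow the paper's intended route: the paper's proof is exactly ``the two formulas of \S 3.1 plus monotonicity of the basic arithmetic Hodge sums,'' and at a Hodge vertex this amounts to the rearrangement argument you give, since $\sigma_i$ does permute $\{0\le l\le d-1:\ l\not\equiv i\ ({\rm mod}\ d)\}$ and both $l\mapsto d-1-l$ and $l\mapsto H_{i-l}$ are non-increasing there (your convention $H_m=0$ for $m<0$ is the right reading of the definition).

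The genuine gap is the non-Hodge vertex case $k=\sum_{j=0}^{i-1}h_j+h_{i,1}$, which you correctly flag but do not close --- and it is exactly where the theorem has content. Neither fallback you sketch is carried out, and the first cannot work as stated: when $h_{i,1}\neq0$ one has $\varpi_{i,1}<\frac{i}{d}<\varpi_{i,0}$, so the surplus ${\rm FP}(n,d;p)(k)-{\rm HP}(n,d)(k)$ decreases from the Hodge vertex $\sum_{j=0}^{i-1}h_j$ to the non-Hodge Frobenius vertex and increases again towards $\sum_{j=0}^{i}h_j$; the non-Hodge vertex is thus a local minimum of the surplus, so nonnegativity at the adjacent Hodge vertices is a strictly weaker statement, and ``pairing against'' them can only succeed if you first prove a quantitative strengthening of the Hodge-vertex inequality, which you do not formulate. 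Your second strategy is the right shape, but the leftover after re-symmetrizing is $-(d-1)\bigl(H_i-H_{i-\sigma_i(0)}\bigr)+(d-1-l_0)\bigl(H_i-H_{i-l_0}\bigr)$, where $l_0\in\{1,\dots,d-1\}$ is the representative of $(1-p)i$; its first summand enters with the unfavorable sign, and neither monotonicity nor the symmetry relation for the $H_i$ determines the total sign by itself. Equivalently, the restricted index set $\{1\le l\le d-1:\ l\not\equiv i,\ l\not\equiv(1-p)i\}$ is carried by $\sigma_i$ onto a set that omits the class of $(1-p^{-1})i$ and contains the class of $(1-p)i$ instead, and the needed inequality requires controlling this exchange inside the weighted rearrangement --- pointwise domination of the exchanged $H$-values can fail, so this is not routine bookkeeping but the actual content of the case. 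Until that step is supplied, the proof is incomplete at precisely the vertices where ${\rm FP}(n,d;p)$ dips closest to ${\rm HP}(n,d)$.
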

\begin{proof}This follows from the lemmas in the previous subsection and the monotonicity of basic arithmetic Hodge sums.\end{proof}
\section{Premium Polygon}
In this section we define the premium polygon ${\rm PP}(n,d;p)$ of ${\rm Simp}(n,d)$, which serves as a bridge between Frobenius polygon and the generic Newton polygon. We prove that, if $p>2d$, then the premium polygon coincides with
Frobenius polygon, i.e.
$${\rm PP}(n,d;p)={\rm FP}(n,d;p).$$
\subsection{Twisted Permutations and their Premium}
\begin{definition}For an $a$-tuple $A=(A_l)_{l\in{\mathbb Z}/(a)}$ of sets, the twist $A(1)$ of $A$
 is define by the formula
$$A(1)_l:=A_{l-1},\ \forall l\in{\mathbb Z}/(a).$$\end{definition}
\begin{definition}For  $a$-tuples $A$ and $B$ of sets, we define the set of isomorphisms from $A$ to $B$ to be the set
$${\rm Iso}(A,B):=\prod_{l=0}^{a-1}{\rm Iso}(A_l,B_l).$$\end{definition}
\begin{definition}For  $a$-tuples $A$, we define the set of twisted permutations of $A$ to be the set
$${\rm Sym}^{(1)}(A):={\rm Iso}(A,A(1)).$$\end{definition}
It is easy to see that
$${\rm Sym}^{(1)}(A^a)=\left({\rm Sym}^{(1)}(A)\right)^a.$$
We call points in ${\mathbb N}^n$ (positive) lattices points, and call points in $(0,d)^n\cap{\mathbb Z}^n$
fundamental lattice points.
\begin{definition}For a lattice point $u\in{\mathbb N}^n$, the degree of $u$ with respect to ${\rm Simp}(n,d)$ is
$$\deg(u):=\frac{u_1+u_2+\cdots+u_n}{d}.$$
\end{definition}
\begin{definition}Let $A$ be an $a$-tuple of subsets of fundamental lattice points. For $\tau\in{\rm Sym}^{(1)}(A)$, we define Frobenius
premium of $\tau$ to be the number
$${\rm Prem}(\tau):=\frac{1}{a(p-1)}\sum_{l=0}^{a-1}\sum_{u\in A_l}\lceil \deg (pu-\tau_l(u)) \rceil.$$
\end{definition}
\subsection{Premium Polygon}
\begin{definition}For $a$-tuple $A$ of subsets of fundamental lattice points, we define Frobenius
premium of $A$ to be the number
$${\rm Prem}(A):=\min_{\tau\in{\rm Sym}^{(1)}(A)}{\rm Prem}(\tau).$$
\end{definition}
It is easy to see that, if $A$ is a subset of fundamental lattice points, then
$${\rm Prem}(A^a)={\rm Prem}(A).$$
\begin{definition}For $a$-tuple $A$ of subsets of fundamental lattice points, we define ${\rm Sym}_0^{(1)}(A)$
the subset of ${\rm Sym}^{(1)}(A)$ consisting of elements of minimal premium, i.e.
$${\rm Sym}_0^{(1)}(A):=\{\tau\in{\rm Sym}^{(1)}(A)\mid {\rm Prem}(\tau)={\rm Prem}(A)\}.$$
\end{definition}
It is easy to see that, if $A$ is a subset of fundamental lattice points, then
$${\rm Sym}_0^{(1)}(A^a)=\left({\rm Sym}_0^{(1)}(A)\right)^a.$$
\begin{definition}For two subsets $A$ and $B$ of fundamental lattices points, we denote by ${\rm Iso}_{\deg}(A,B)$
the set of degree-preserving isomorphisms from $A$ to $B$, i.e.
$${\rm Iso}_{\deg}(A,B):=\{\alpha\in{\rm Iso}(A,B)\mid \deg(\alpha(u))=\deg(u),\ \forall u\in A\}.$$ \end{definition}
\begin{definition}For  $a$-tuples $A$ and $B$ of subsets of fundamental lattices points, we define the set of degree-preserving isomorphisms from $A$ to $B$ to be the set
$${\rm Iso}_{\deg}(A,B):=\prod_{l=0}^{a-1}{\rm Iso}_{\deg}(A_l,B_l).$$\end{definition}
\begin{lemma}Let $A$ and $B$ be $a$-tuples of subsets of fundamental lattices points.
If there is a degree-preserving isomorphism from $A$ to $B$, then
$${\rm Prem}(A)={\rm Prem}(B).$$\end{lemma}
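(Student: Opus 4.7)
The key observation is that the degree function $\deg(u) = (u_1+\cdots+u_n)/d$ is $\mathbb{Z}$-linear, so $\deg(pu - v) = p\deg(u) - \deg(v)$ for any lattice points $u,v$. In particular, the summand $\lceil \deg(pu - \tau_l(u))\rceil$ appearing in the definition of ${\rm Prem}(\tau)$ depends on $u$ and $\tau_l(u)$ only through their degrees.

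The plan is to use a degree-preserving isomorphism $\alpha = (\alpha_l)_{l\in\mathbb{Z}/(a)} \in {\rm Iso}_{\deg}(A,B)$ to build a premium-preserving bijection between ${\rm Sym}^{(1)}(A)$ and ${\rm Sym}^{(1)}(B)$ by conjugation. Specifically, for $\tau \in {\rm Sym}^{(1)}(A)$ (so $\tau_l : A_l \to A_{l-1}$), define $\Phi_\alpha(\tau) \in {\rm Sym}^{(1)}(B)$ by
$$\Phi_\alpha(\tau)_l := \alpha_{l-1} \circ \tau_l \circ \alpha_l^{-1} : B_l \longrightarrow B_{l-1}.$$
Since each $\alpha_l$ is a bijection, $\Phi_\alpha$ is a bijection with inverse $\Phi_{\alpha^{-1}}$, where $\alpha^{-1} \in {\rm Iso}_{\deg}(B,A)$ is obtained componentwise.

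Next, I would verify that $\Phi_\alpha$ preserves premium termwise. For $v \in B_l$, writing $u = \alpha_l^{-1}(v) \in A_l$, the degree-preserving hypothesis gives $\deg(v) = \deg(u)$ and $\deg(\Phi_\alpha(\tau)_l(v)) = \deg(\alpha_{l-1}(\tau_l(u))) = \deg(\tau_l(u))$. Combined with linearity of $\deg$, this yields
$$\deg(pv - \Phi_\alpha(\tau)_l(v)) = p\deg(v) - \deg(\Phi_\alpha(\tau)_l(v)) = p\deg(u) - \deg(\tau_l(u)) = \deg(pu - \tau_l(u)),$$
hence the ceilings agree. Summing over $v \in B_l$ (equivalently over $u \in A_l$ via $\alpha_l$) and then over $l$ shows ${\rm Prem}(\Phi_\alpha(\tau)) = {\rm Prem}(\tau)$.

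Finally, since $\Phi_\alpha$ is a bijection and preserves premium, the minima of ${\rm Prem}$ over ${\rm Sym}^{(1)}(A)$ and ${\rm Sym}^{(1)}(B)$ coincide, giving ${\rm Prem}(A) = {\rm Prem}(B)$ as required. There is no real obstacle here; the entire argument reduces to the linearity of $\deg$ plus the tautology that premium is defined via an infimum that transports across a premium-preserving bijection. The only thing to be careful about is matching the index shift in ${\rm Sym}^{(1)}$ (the target tuple is $A(1)$, not $A$), which is precisely why the conjugating maps are $\alpha_{l-1}$ on the left and $\alpha_l^{-1}$ on the right.
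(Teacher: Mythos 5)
Your proposal is correct and is essentially the paper's own argument: the paper also conjugates a twisted permutation $\tau$ by the degree-preserving isomorphism (in the form $\alpha(1)\circ\tau\circ\alpha^{-1}$, which is exactly your $\alpha_{l-1}\circ\tau_l\circ\alpha_l^{-1}$ with the same index shift) and notes that this preserves premium, whence the minima agree. You simply spell out the premium-preservation computation via linearity of $\deg$, which the paper leaves as an assertion.
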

\begin{proof}
We can prove that, if  $\alpha\in{\rm Iso}_{\deg}(A,B)$, then
$${\rm Prem}(\alpha(1)\circ\tau\circ\alpha^{-1})={\rm Prem}(\tau), \forall \tau\in{\rm Sym}^{(1)}(A),$$
where $\alpha(1)\in{\rm Iso}_{\deg}(A(1),B(1))$ is the isomorphism induced by $\alpha$.
The lemma now follows.\end{proof}
\begin{definition}For $k=0,1,\cdots,nd$, we define ${\rm SF}(k)$ to be the set of subsets of fundamental lattice points of cardinality $k$, i.e.
$${\rm SF}(k):=\{A\subseteq(0,d)^n\cap{\mathbb Z}^n:\  |A|=k\}.$$ \end{definition}
\begin{definition}For $k=0,1,\cdots,(d-1)^n$, we define the premium at $k$ to be the number
$${\rm Prem}(k):=\min_{A\in{\rm SF}(k)^a}{\rm Prem}(A).$$ \end{definition}
\begin{definition}We define the premium polygon ${\rm PP}(n,d;p)$ of ${\rm Simp}(n,d)$ to be the convex hull
in ${\mathbb R}^n$ of the points
$$(k,{\rm Prem}(k)),\ k=0,1,\cdots,(d-1)^n.$$\end{definition}
\subsection{Coincidence with Frobenius Polygon}
\begin{definition}For finite subset $A\subseteq{\mathbb N}^n$, we define the degree of $A$ to be
$$\deg(A):=\sum_{u\in A}\deg (u).$$
\end{definition}
It is easy to see that
 $$\min_{A\in{\rm SF}(k)}\deg (A)={\rm HP}(n,d)(k).$$
\begin{definition}We define ${\rm SF}_0(k)$ to be the subset of ${\rm SF}(k)$ consisting of elements of minimal degree, i.e.
$${\rm SF}_0(k)=\{A\in{\rm SF}(k)\mid \deg A={\rm HP}(n,d)(k)\}.$$ \end{definition}
\begin{definition}For $i=0,1,\cdots,nd$, we write
$$W_i:=\{u\in(0,d)^n\cap{\mathbb Z}^n\mid \deg(u)=\frac{i}{d}\}.$$
\end{definition}
It is easy to see that, if $k=\sum_{j=0}^ih_j$, then
$$A\in{\rm SF}_0(k)\Leftrightarrow A=\cup_{j=0}^iW_j.$$
It is also easy to see that, if $\sum_{j=0}^{i-1}h_j\leq k\leq\sum_{j=0}^ih_j$, then
$$A\in{\rm SF}_0(k)\Leftrightarrow\cup_{j=0}^{i-1}W_j\subseteq A\subseteq\cup_{j=0}^iW_j.$$
In particular, there are degree-preserving isomorphisms between elements of ${\rm SF}_0(k)$.
\begin{lemma}\label{premium-lower-bound}If $A\in{\rm SF}_0(k)^a$,
then
$${\rm Prem}(A)\geq {\rm FP}(n,d;p)(k).$$
\end{lemma}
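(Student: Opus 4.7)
Since ${\rm Prem}(\tau) = \frac{1}{a(p-1)}\sum_{l=0}^{a-1} M_l(\tau_l)$ with $M_l(\tau_l) := \sum_{u\in A_l}\lceil\deg(pu-\tau_l(u))\rceil$, averaging reduces the lemma to the per-layer estimate $M_l(\tau_l) \geq (p-1)\,{\rm FP}(n,d;p)(k)$ for every $l$ and every bijection $\tau_l : A_l \to A_{l-1}$. Because $A_l, A_{l-1}\in{\rm SF}_0(k)$ share the same multiset of degrees, setting $j_u := d\deg u$ and $j'_u := d\deg\tau_l(u)$ gives $\{j_u\}_{u\in A_l} = \{j'_u\}_{u\in A_l}$ as multisets.

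The main tool is the sub-additivity of the ceiling: for any integer $i'$,
$$\lceil(pj_u-j'_u)/d\rceil \geq \lceil(pj_u-i')/d\rceil - \lceil(j'_u-i')/d\rceil.$$
Summing over $u\in A_l$ and reindexing the subtracted sum via $\{j_u\} = \{j'_u\}$,
$$M_l(\tau_l) \geq \sum_{u\in A_l}\bigl(\lceil(pj_u-i')/d\rceil - \lceil(j_u-i')/d\rceil\bigr) = (p-1)\sum_{u\in A_l}\varpi_{j_u}^{(i')}.$$
A direct computation from the definitions yields the two key identities
$$\varpi_i^{(i)} = \varpi_{i,0}, \qquad \varpi_i^{(i-1)} = \varpi_{i,1} \quad (\text{when } (p-1)i\not\equiv 0\pmod d).$$

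Now let $i := \min\{i' : \sum_{j\leq i'}h_j \geq k\}$ and $s := k - \sum_{j<i}h_j$, so that $A_l\in{\rm SF}_0(k)$ consists of $h_j$ points of degree $j/d$ for $j<i$ together with $s$ points of degree $i/d$. The argument splits into two cases. If $s\geq h_{i,1}$, I apply the inequality with $i'=i$: the preceding lemmas, applied at the two Frobenius vertices of index $i$ bounding the segment $[\sum_{j<i}h_j + h_{i,1},\ \sum_{j\leq i}h_j]$, together with $\varpi_i^{(i)}=\varpi_{i,0}$, identify ${\rm FP}^{(i)}(n,d;p)(k)$ with ${\rm FP}(n,d;p)(k)$, yielding the required bound. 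If instead $s<h_{i,1}$, I apply the inequality with $i'=i-1$: the sum splits as $\sum_{j\leq i-1}h_j\varpi_j^{(i-1)} + s\,\varpi_i^{(i-1)}$, whose first summand equals ${\rm FP}^{(i-1)}(n,d;p)(\sum_{j<i}h_j) = {\rm FP}(n,d;p)(\sum_{j<i}h_j)$ by the preceding lemma (at the Frobenius vertex of index $i-1$), and whose second summand equals $s\,\varpi_{i,1}$ by the identity $\varpi_i^{(i-1)}=\varpi_{i,1}$; since ${\rm FP}(n,d;p)$ advances with slope $\varpi_{i,1}$ throughout the sub-segment $[\sum_{j<i}h_j,\ \sum_{j<i}h_j+h_{i,1}]$, the total equals ${\rm FP}(n,d;p)(k)$. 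The main obstacle is this second case: the $i$-fitted polygon strictly undershoots ${\rm FP}$ on $[\sum_{j<i}h_j,\,\sum_{j<i}h_j+h_{i,1})$, so one must switch to the $(i-1)$-fitted polygon and use the identity $\varpi_i^{(i-1)}=\varpi_{i,1}$ to recover the correct slope for the ``small-$\epsilon=1$'' portion of the $i$-th Frobenius block.
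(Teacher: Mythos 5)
Your argument is correct, and while it follows the paper's overall skeleton (ceiling subadditivity against an integer reference point $i'$, reindexing the subtracted sum via the common degree multiset of $A_l$ and $A_{l-1}$, then matching the result with fitted Frobenius slopes and the lemma ${\rm FP}={\rm FP}^{(i)}$ at Frobenius vertices), it diverges from the paper exactly in the delicate case. For $k$ strictly inside the $\epsilon=1$ block, i.e. $\sum_{j<i}h_j<k<\sum_{j<i}h_j+h_{i,1}$, the paper keeps the reference $i'=i$; since the $i$-fitted bound undershoots ${\rm FP}(k)$ there, the authors compensate by a counting argument, showing that for at least $\sum_{j<i}h_j+h_{i,1}-k$ points $u\in A_l$ the inequality $\lceil\deg(pu-\tau_l(u))\rceil\geq\lceil\deg(pu)-\tfrac{i}{d}\rceil-\lceil\deg\tau_l(u)-\tfrac{i}{d}\rceil$ is strict (a fractional-part comparison involving both $A_l$ and $A_{l-1}$), and adding that surplus to recover ${\rm FP}(k)$. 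You instead switch the reference to $i'=i-1$ and use the identities $\varpi_i^{(i-1)}=\varpi_{i,1}$ (valid because $h_{i,1}>0$ forces $(p-1)i\not\equiv0\ ({\rm mod}\ d)$, which you noted) and $\sum_{j\leq i-1}h_j\varpi_j^{(i-1)}={\rm FP}^{(i-1)}(\sum_{j<i}h_j)={\rm FP}(\sum_{j<i}h_j)$, so the subadditivity bound lands exactly on ${\rm FP}(k)$ with no correction term; your case with $s\geq h_{i,1}$ is essentially the paper's first case. The trade-off: your treatment avoids the paper's strict-ceiling count entirely and is arguably cleaner (the identification $\sum_{j\le i-1}h_j\varpi_j^{(i-1)}={\rm FP}^{(i-1)}(\sum_{j<i}h_j)$ needs no slope-ordering discussion since it is the endpoint value of the $(i-1)$-fitted polygon), while the paper's counting argument yields the extra information about when equality can hold, which is what the subsequent lemma characterizing ${\rm Sym}_0^{(1)}(A)$ (the condition $\{\deg\tau_l(u)+\tfrac{d-1-i}{d}\}\geq\{p\deg u+\tfrac{d-1-i}{d}\}$) is built on; so your shortcut proves the stated lemma but would not by itself replace the paper's analysis in the rest of \S4. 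Both routes rely on the same background conventions about the value of ${\rm FP}$ and ${\rm FP}^{(i)}$ at partial sums of their slope multiplicities, so that is not an additional gap.
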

\begin{proof}Let $\tau\in {\rm Sym}^{(1)}(A)$.
Firstly we assume that
$$\sum_{j=0}^{i-1}h_j+h_{i,1}<k\leq \sum_{j=0}^ih_j.$$
Then
\begin{eqnarray*}
{\rm Prem}(\tau)  &\geq&\frac{1}{a(p-1)}\sum_{l=0}^{a-1}\sum_{u\in A_l}
\left(\lceil\deg(pu)-\frac{i}{d}\rceil-\lceil\deg u-\frac{i}{d}\rceil\right)\\
&\geq& \sum_{2j-\epsilon\leq 2i-1}h_{j,\epsilon}\varpi_j^{(i)}+(k-\sum_{2j-\epsilon\leq 2i-1}h_{j,\epsilon})\varpi_{i,0}\\
&\geq&{\rm FP}(n,d;p)(k)
\end{eqnarray*}
Secondly we assume that
$$\sum_{j=0}^{i-1}h_j<k\leq \sum_{j=0}^{i-1}h_j+h_{i,1}.$$
Then
\begin{eqnarray*}
&&\#\{u\in A_l\mid \lceil\deg(pu-\tau_l(u))\rceil
>\lceil\deg(pu)-\frac{i}{d}\rceil-\lceil\deg\tau_l(u)-\frac{i}{d}\rceil\}\\
    &\geq& \#\{u\in A_l\mid\{\deg(pu)+\frac{d-1-i}{d}\}>\{\deg\tau_l(u)+\frac{d-1-i}{d}\}\} \\
    &\geq&\#\{u\in A_l\mid\deg u<\frac{i}{d},\ \{\deg(pu)\}=\frac{i}{d}\}\\
&&-\#\{u\in A_{l-1}\mid\deg u<\frac{i}{d},\ \{\deg u\}=\frac{i}{d}\}-(k-\sum_{j=0}^{i-1}h_j)\\
 &\geq&\sum_{j=0}^{i-1}h_j+h_{i,1}-k.
\end{eqnarray*}
It follows that
\begin{eqnarray*}
    {\rm Prem}(\tau)&\geq&\frac{1}{a(p-1)}\sum_{l=0}^{a-1}\sum_{u\in A_l}\left(\lceil\deg(pu)-\frac{i}{d}\rceil-\lceil\deg\tau_l(u)-\frac{i}{d}\rceil\right)\\
&&+\frac{1}{p-1}(\sum_{j=0}^{i-1}h_j+h_{i,1}-k)\\
  & \geq&\sum_{2j-\epsilon\leq 2i-1}h_{j,\epsilon}\varpi_j^{(i)} -(\sum_{j=0}^{i-1}h_j+h_{i,1}-k)\varpi_{i,1}\\
  & \geq&{\rm FP}(n,d;p)(k).
\end{eqnarray*}
The lemma now follows.
\end{proof}
Similarly, we can prove two following lemmas.
\begin{lemma}\label{premium-strict-lower-bound}If $A\in{\rm SF}(k)^a\setminus{\rm SF}_0(k)^a$,
then
$${\rm Prem}(A)\geq {\rm FP}(n,d;p)(k)+\frac{1}{a(p-1)}\left([\frac{p}{d}]-1\right).$$
\end{lemma}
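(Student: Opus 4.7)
The plan is to mirror the proof of Lemma~\ref{premium-lower-bound} and harvest an extra $[p/d]-1$ whenever some component $A_l$ of $A$ is not of minimal degree. Fix $\tau\in{\rm Sym}^{(1)}(A)$ and let $i$ be the integer with $\sum_{j<i}h_j<k\leq\sum_{j\leq i}h_j$. As in the first step of that proof, apply the ceiling inequality $\lceil x-y\rceil\geq\lceil x\rceil-\lceil y\rceil$ with $x=p\deg u-i/d$ and $y=\deg\tau_l(u)-i/d$, and carry out the cyclic summation via the bijections $\tau_l:A_l\to A_{l-1}$. Writing $f(j):=\lceil(pj-i)/d\rceil-\lceil(j-i)/d\rceil$ and $S(B):=\sum_{u\in B}f(d\deg u)$, this yields
$${\rm Prem}(\tau)\geq\frac{1}{a(p-1)}\sum_{l=0}^{a-1}S(A_l),$$
possibly augmented, in the second case regime $\sum_{j<i}h_j<k\leq\sum_{j<i}h_j+h_{i,1}$, by the nonnegative correction term employed in the earlier proof.

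The new ingredient is the monotonicity estimate
$$f(j+1)-f(j)\geq[p/d]-1,$$
which follows because the argument of the first ceiling grows by $p$ (contributing at least $[p/d]$ to the ceiling) while that of the second grows by $1$ (contributing at most $1$). Fix $A^0\in{\rm SF}_0(k)$. Since $A^0$ consists of every fundamental lattice point of degree less than $i/d$ together with a subset of $W_i$, elements of $A^0\setminus B$ have degree $\leq i/d$ and elements of $B\setminus A^0$ have degree $\geq i/d$. Pairing these equinumerous sets in increasing order of degree, the monotonicity above gives
$$S(B)-S(A^0)\geq\bigl([p/d]-1\bigr)\cdot d\bigl(\deg B-\deg A^0\bigr).$$
When $B\notin{\rm SF}_0(k)$, the minimality $\deg A^0={\rm HP}(n,d)(k)$ forces $\deg B-\deg A^0\geq 1/d$ (degrees are multiples of $1/d$), so $S(B)-S(A^0)\geq[p/d]-1$.

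By hypothesis at least one $A_l$ lies outside ${\rm SF}_0(k)$, so $\sum_l S(A_l)\geq a\,S(A^0)+([p/d]-1)$. Plugging this into the displayed bound and invoking the fact, established inside the proof of Lemma~\ref{premium-lower-bound}, that $\frac{1}{p-1}S(A^0)$ (plus the correction term in the second case) is $\geq{\rm FP}(n,d;p)(k)$ whenever $A^0\in{\rm SF}_0(k)^a$, one concludes
$${\rm Prem}(\tau)\geq{\rm FP}(n,d;p)(k)+\frac{[p/d]-1}{a(p-1)}.$$
Taking the infimum over $\tau\in{\rm Sym}^{(1)}(A)$ finishes the proof. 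The principal subtlety is verifying that the correction term from the second case of Lemma~\ref{premium-lower-bound} is additive with the new excess $[p/d]-1$ rather than being absorbed into it; this holds because that correction arises from a fractional-part counting of pairs $(u,\tau_l(u))$ which is insensitive to the swap used to pass from $A^0$ to $A_l$.
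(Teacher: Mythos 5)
Your central mechanism is sound and is surely the intended ``similar'' argument: the monotonicity $f(j+1)-f(j)\geq[\frac{p}{d}]-1$ for $f(j)=\lceil\frac{pj-i}{d}\rceil-\lceil\frac{j-i}{d}\rceil$ is correct, the pairing of $A_l\setminus A^0$ against $A^0\setminus A_l$ is legitimate (every element of the former has degree $\geq\frac{i}{d}$, every element of the latter degree $\leq\frac{i}{d}$), and $\deg A_l-\deg A^0\geq\frac1d$ whenever $A_l\notin{\rm SF}_0(k)$, so $\sum_l S(A_l)\geq aS(A^0)+[\frac pd]-1$. In the regime $\sum_{j<i}h_j+h_{i,1}<k\leq\sum_{j\leq i}h_j$, where the first case of the proof of Lemma \ref{premium-lower-bound} shows $\frac{1}{p-1}S(A^0)\geq{\rm FP}(n,d;p)(k)$ with no correction term, your argument is complete.

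The gap is in the other regime $\sum_{j<i}h_j<k\leq\sum_{j<i}h_j+h_{i,1}$. There one computes $\frac{1}{p-1}S(A^0)={\rm FP}(n,d;p)(k)-\frac{1}{p-1}(\sum_{j<i}h_j+h_{i,1}-k)$, i.e.\ the main term alone sits strictly \emph{below} ${\rm FP}(k)$, and the deficit is recovered in Lemma \ref{premium-lower-bound} only through the fractional-part count, whose proof uses minimality of both $A_l$ and $A_{l-1}$: the lower bound for $\#\{u\in A_l\mid\deg u<\frac id,\ \{\deg(pu)\}=\frac id\}$ requires $A_l$ to contain every fundamental point of degree $<\frac id$, and the subtracted term requires $A_{l-1}$ to contain only $k-\sum_{j<i}h_j$ points of degree $\geq\frac id$ with fractional degree $\frac id$. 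For a non-minimal tuple each swapped point can cost a unit in the count at two adjacent indices, so the correction is emphatically not ``insensitive to the swap''; your closing sentence asserts exactly what must be proved. A crude balance --- gain $[\frac pd]-1$ per unit of degree excess versus loss up to $2$ per swapped point --- leaves the bound short of the stated constant by as much as $\frac{2}{a(p-1)}$ when a single low-degree counted point is traded for one of degree $\frac id$. To close this you must redo the counting for arbitrary $A\in{\rm SF}(k)^a$, e.g.\ by exploiting that a missing counted point satisfies $d\deg u\equiv i-\sigma_i(0)\ ({\rm mod}\ d)$, so its replacement forces a degree excess of at least $\sigma_i(0)$ units, or otherwise show the degree gain strictly dominates the loss. (The paper itself only says the lemma is proved ``similarly'' to Lemma \ref{premium-lower-bound}; this second regime is exactly where the unwritten extra work lies.)
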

\begin{lemma}Let $k=\sum_{2j-\epsilon\leq 2i-\iota}h_{j,\epsilon}$, $A\in{\rm SF}_0(k)^a$, and $\tau\in{\rm Sym}^{(1)}(A)$. Then
$${\rm Prem}(\tau)={\rm FP}(n,d;p)(k)$$ if and only if
$$\{\deg\tau_l(u)+\frac{d-1-i}{d}\}\geq\{p\deg u+\frac{d-1-i}{d}\}, \forall u\in A_l,\ \forall l\in{\mathbb Z}/(a).$$
\end{lemma}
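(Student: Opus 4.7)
The plan is to trace the chain of inequalities used in the proof of Lemma \ref{premium-lower-bound} at a Frobenius vertex $k$ and to identify precisely when every step becomes an equality. Throughout set $r:=\frac{d-1-i}{d}$, and write $s=\deg\tau_l(u)=s_0/d$, $t=p\deg u=t_0/d$ with $s_0,t_0\in\mathbb{Z}$.

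First, recall the pointwise bound $\lceil\deg(pu-\tau_l(u))\rceil\geq\lceil t-\frac{i}{d}\rceil-\lceil s-\frac{i}{d}\rceil$ that underlies Lemma \ref{premium-lower-bound}. Setting $\alpha=t-\frac{i}{d}$ and $\beta=s-\frac{i}{d}$, this is the instance $\lceil\alpha\rceil\leq\lceil\alpha-\beta\rceil+\lceil\beta\rceil$ of the general inequality $\lceil x+y\rceil\leq\lceil x\rceil+\lceil y\rceil$. The standard criterion for equality is $\{-(\alpha-\beta)\}+\{-\beta\}<1$, and in the present notation this reduces to $a_1+a_2<d$, where $a_1=(s_0-t_0)\bmod d$ and $a_2=(i-s_0)\bmod d$.

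Next, the plan is to prove the elementary arithmetic identity $a_1+a_2<d\iff b_1\geq b_2$, where $b_1=(s_0-i-1)\bmod d$ and $b_2=(t_0-i-1)\bmod d$. A two-line case split gives $b_1-b_2=a_1\geq 0$ in the no-carry case $a_1+a_2<d$, and $b_1-b_2=a_1-d<0$ in the carry case $a_1+a_2\geq d$. Since $r\equiv-\frac{i+1}{d}\pmod{1}$, one checks $b_1/d=\{s+r\}$ and $b_2/d=\{t+r\}$, so the identity says that equality in the pointwise ceiling bound is equivalent to the condition stated in the lemma.

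Finally, because $k$ is a Frobenius vertex, the remaining inequalities in the chain of Lemma \ref{premium-lower-bound} collapse to equalities: when $\iota=0$ the vertex $k=\sum_{j\leq i}h_j$ is the upper endpoint of Case 1 of that proof, while when $\iota=1$ the vertex $k=\sum_{j<i}h_j+h_{i,1}$ is the upper endpoint of Case 2 with vanishing combinatorial defect $\sum_{j<i}h_j+h_{i,1}-k=0$. Using that $A\in{\rm SF}_0(k)^a$ (so each $A_l$ has the same degree distribution) and that $\tau_l\colon A_l\to A_{l-1}$ is a bijection, the re-summation $\sum_{l}\sum_{u\in A_l}\lceil\deg\tau_l(u)-\frac{i}{d}\rceil=\sum_{l}\sum_{u\in A_l}\lceil\deg u-\frac{i}{d}\rceil$ turns the pointwise-sum lower bound into exactly $a(p-1)\,{\rm FP}(n,d;p)(k)$. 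Hence ${\rm Prem}(\tau)={\rm FP}(n,d;p)(k)$ if and only if every pointwise ceiling inequality is an equality, if and only if the condition of the lemma holds at every $(u,l)$. The main obstacle is the carry/no-carry bookkeeping in the arithmetic identity $a_1+a_2<d\iff b_1\geq b_2$; once it is in hand, the rest is a direct transcription of the proof of Lemma \ref{premium-lower-bound}.
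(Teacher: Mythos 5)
Your proposal is correct and follows the same route the paper intends: the paper proves this lemma ``similarly'' to Lemma \ref{premium-lower-bound}, i.e.\ by tracking when each inequality in that chain becomes an equality, which is exactly what you do. Your explicit carry/no-carry verification that equality in the pointwise ceiling bound $\lceil\deg(pu-\tau_l(u))\rceil\geq\lceil p\deg u-\frac{i}{d}\rceil-\lceil\deg\tau_l(u)-\frac{i}{d}\rceil$ is equivalent to $\{\deg\tau_l(u)+\frac{d-1-i}{d}\}\geq\{p\deg u+\frac{d-1-i}{d}\}$, together with the observation that at a Frobenius vertex the re-summed bound equals $a(p-1)\,{\rm FP}(n,d;p)(k)$ (so the only possible slack is pointwise), correctly fills in the details the paper leaves implicit.
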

We can show that, if $k=\sum_{2j-\epsilon\leq 2i-\iota}h_{j,\epsilon}$, and $A\in{\rm SF}_0(k)^a$,
 then there exists $\tau\in{\rm Sym}^{(1)}(A)$ such that
$$\{\deg\tau_l(u)+\frac{d-1-i}{d}\}\geq\{p\deg u+\frac{d-1-i}{d}\}, \forall u\in A_l,\ \forall l\in{\mathbb Z}/(a).$$
Therefore we have the following corollaries.
\begin{corollary}If $k=\sum_{2j-\epsilon\leq 2i-\iota}h_{j,\epsilon}$, and $A\in{\rm SF}_0(k)^a$,
then
$${\rm Prem}(A)={\rm FP}(n,d;p)(k),$$
and ${\rm Sym}_0^{(1)}(A)$ consists of twisted permutations $\tau$ such that
$$\{\deg\tau_l(u)+\frac{d-1-i}{d}\}\geq\{p\deg u+\frac{d-1-i}{d}\}, \forall u\in A_l,\ \forall l\in{\mathbb Z}/(a).$$
\end{corollary}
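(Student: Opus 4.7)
The plan is to assemble the corollary from three ingredients already prepared in the excerpt: the lower bound of Lemma~\ref{premium-lower-bound}, the achievability statement recorded immediately before the corollary (existence of a specific $\tau \in {\rm Sym}^{(1)}(A)$ satisfying the fractional-part inequality), and the equality criterion supplied by the unnamed lemma just above. The first ingredient gives ${\rm Prem}(A) \geq {\rm FP}(n,d;p)(k)$ for every $A \in {\rm SF}_0(k)^a$; applied to the specific $k$ of Frobenius-vertex form, it yields one direction of the desired equality.

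For the second ingredient, I would exhibit a twisted permutation $\tau \in {\rm Sym}^{(1)}(A)$ realizing the fractional-part inequality. Because $A_l$ and $A_{l-1}$ both contain $\bigcup_{j<i} W_j$ and differ only in the subset of $W_i$ they select (this is the characterization of ${\rm SF}_0(k)$ recalled just before Lemma~\ref{premium-lower-bound}), the inequality on $\tau_l$ depends only on the degree classes $W_j$, $W_{j'}$ to which $u$ and $\tau_l(u)$ belong. The matching therefore reduces to a layered bijection problem between the degree classes of $A_l$ and of $A_{l-1}$, whose constraints are governed by the conjugate $\sigma_i$ of Frobenius from Section~2: for each $j$, routing $W_j \cap A_l$ to $W_{\sigma_i^{-1}(j)} \cap A_{l-1}$ (suitably extended on the $W_i$ layer) makes the required inequality an identity. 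With such a $\tau$, the preceding lemma gives ${\rm Prem}(\tau) = {\rm FP}(n,d;p)(k)$, which together with the lower bound proves ${\rm Prem}(A) = {\rm FP}(n,d;p)(k)$.

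The characterization of ${\rm Sym}_0^{(1)}(A)$ is then immediate: by definition $\tau \in {\rm Sym}_0^{(1)}(A)$ iff ${\rm Prem}(\tau) = {\rm Prem}(A) = {\rm FP}(n,d;p)(k)$, and by the preceding lemma this is equivalent to the fractional-part condition. The only nontrivial step in this whole plan is the existence of a $\tau$ realizing the fractional-part inequality; this is essentially a Hall-type matching problem across the degree layers of $A$, whose feasibility rests on the arithmetic of $\sigma_i$ and on the monotonicity of the basic arithmetic Hodge sums established in Lemma~\ref{monotonicity}. Once this combinatorial matching is pinned down, the remainder of the argument is mechanical assembly.
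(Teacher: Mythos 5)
Your overall assembly — lower bound from Lemma~\ref{premium-lower-bound}, equality criterion from the unnamed lemma preceding the corollary, and the existence of an optimizing $\tau$ realizing the fractional-part inequality — is exactly the paper's route; the paper, for its part, states that existence with a bare ``We can show that\dots'' and gives no construction.

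Where you go beyond the paper, the concrete matching you propose is not right as stated. Routing $W_j\cap A_l$ to $W_{\sigma_i^{-1}(j)}\cap A_{l-1}$ does \emph{not} turn the fractional-part inequality into an identity: from $i-\sigma_i(l)\equiv p^{-1}(i-l)\ (\mathrm{mod}\ d)$ one gets $\sigma_i^{-1}(j)\equiv pj-(p-1)i\ (\mathrm{mod}\ d)$, so
$\bigl\{\tfrac{\sigma_i^{-1}(j)+d-1-i}{d}\bigr\}=\bigl\{\tfrac{p(j-i)-1}{d}\bigr\}$, whereas the target is $\bigl\{\tfrac{pj-1-i}{d}\bigr\}$; these agree only when $(p-1)i\equiv 0\ (\mathrm{mod}\ d)$, i.e.\ in the trivial case. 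The residue you actually want for equality is $j'\equiv pj\ (\mathrm{mod}\ d)$, which is plain multiplication by $p$ on $\mathbb{Z}/(d)$, not $\sigma_i$. A second, independent problem is that a ``layer-to-layer'' bijection $W_j\to W_{j'}$ cannot in general be a bijection, because $|W_j|=h_j$ and $|W_{j'}|=h_{j'}$ differ; a genuine Hall argument across residue classes (with feasibility coming from the monotonicity of the arithmetic Hodge sums $H_i$, as you suspect) is needed, not a clean layer-by-layer routing. So the outline is the paper's, but the only place where you add content — the explicit construction of $\tau$ — contains a wrong formula and an unaddressed cardinality mismatch.
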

\begin{corollary}If $p>2d$, then the premium polygon coincides with Frobenius polygon, i.e.
$${\rm PP}(n,d;p)={\rm FP}(n,d;p).$$
\end{corollary}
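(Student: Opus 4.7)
The plan is to derive ${\rm PP}(n,d;p)={\rm FP}(n,d;p)$ directly from the three preceding results (Lemma \ref{premium-lower-bound}, Lemma \ref{premium-strict-lower-bound}, and the corollary that just precedes the statement) by pinching the premium polygon between two copies of the Frobenius polygon.

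First I would establish the pointwise lower bound ${\rm Prem}(k)\geq{\rm FP}(n,d;p)(k)$ for every $k\in\{0,1,\ldots,(d-1)^n\}$. Any $A\in{\rm SF}(k)^a$ either lies in ${\rm SF}_0(k)^a$, in which case Lemma \ref{premium-lower-bound} gives directly ${\rm Prem}(A)\geq{\rm FP}(n,d;p)(k)$, or it lies outside ${\rm SF}_0(k)^a$, in which case Lemma \ref{premium-strict-lower-bound} gives
$${\rm Prem}(A)\geq{\rm FP}(n,d;p)(k)+\frac{1}{a(p-1)}([p/d]-1).$$
Here the hypothesis $p>2d$ comes into play: it forces $[p/d]\geq 2$, so the correction term is nonnegative, and the Frobenius bound survives. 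Taking the minimum over $A\in{\rm SF}(k)^a$ yields ${\rm Prem}(k)\geq{\rm FP}(n,d;p)(k)$ uniformly in $k$.

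Next I would show that equality ${\rm Prem}(k)={\rm FP}(n,d;p)(k)$ holds at every Frobenius vertex $k=\sum_{2j-\epsilon\leq 2i-\iota}h_{j,\epsilon}$. For such a $k$ the preceding corollary exhibits an $A\in{\rm SF}_0(k)^a$ realising ${\rm Prem}(A)={\rm FP}(n,d;p)(k)$, hence ${\rm Prem}(k)\leq{\rm FP}(n,d;p)(k)$; combined with the previous step the two values coincide.

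Finally I would conclude via a convex-hull argument. By definition ${\rm PP}(n,d;p)$ is the lower boundary of the convex hull of the points $(k,{\rm Prem}(k))$. The first step says each such point lies on or above the convex Frobenius polygon, which gives ${\rm PP}(n,d;p)\geq{\rm FP}(n,d;p)$. The second step says that at each Frobenius vertex the corresponding point sits exactly on ${\rm FP}(n,d;p)$; since ${\rm FP}(n,d;p)$ is by construction the lower convex hull of its values at the Frobenius vertices, this gives ${\rm PP}(n,d;p)\leq{\rm FP}(n,d;p)$. The two inequalities force equality. I do not expect a real obstacle here: the corollary is essentially a synthesis of machinery already in place. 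The one delicate point worth flagging is that the assumption $p>2d$ is used precisely to make the bonus term $\frac{1}{a(p-1)}([p/d]-1)$ nonnegative in Lemma \ref{premium-strict-lower-bound}, which is exactly what prevents non-minimal-degree configurations from undercutting the Frobenius bound.
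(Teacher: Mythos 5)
Your proposal is correct and is essentially the synthesis the paper intends: the paper states this corollary without a separate proof precisely because it follows, as you argue, from Lemma \ref{premium-lower-bound}, Lemma \ref{premium-strict-lower-bound} (where $p>2d$ makes the correction term $\frac{1}{a(p-1)}([p/d]-1)$ nonnegative), and the preceding corollary giving equality at Frobenius vertices, combined with the convex-hull comparison. No gaps to flag.
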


\section{Twisted Hasse Polynomials}
In this section, we define, for each Frobenius vertex $k$, a system of twisted Hasse polynomials
$${\rm TH}^{(a)}(k)\in{\mathbb F}_p[a_w\mid w\in{\rm Simp}(n,d)\cap{\mathbb Z}^n],\ a=1,2,\cdots.$$
We prove that,
when viewed as polynomials over
$${\mathbb F}_p[a_w\mid \deg w=1],$$
the twisted Hasse polynomials have degree less than $p^a$ if $p>d^{n+1}$.

\subsection{Polynomials of Lattice Points}
\begin{definition}An integral section on ${\rm Simp}(n,d)$ is a map
$$s:{\rm Simp}(n,d)\cap{\mathbb Z}^n\rightarrow{\mathbb N}.$$
\end{definition}
\begin{definition}The integral bundle ${\rm Bund}(n,d)$ on ${\rm Simp}(n,d)$ is the set of integral sections on ${\rm Simp}(n,d)$.
\end{definition}
\begin{definition}For $s\in{\rm Bund}(n,d)$, we define the degree of $s$ to be the number
$$\deg (s):=\sum_{w\in{{\rm Simp}(n,d)\cap{\mathbb Z}^n}}s(w).$$\end{definition}
Let ${\mathbb Z}_p$ be the ring of $p$-adic integers, and let
$$E(x)=\exp(\sum_{i=0}^{\infty}\frac{x^{p^i}}{p^i})\in{\mathbb Z}_p[[x]]$$
be the Artin-Hasse exponential.
Write
$$E(x)=\sum_{j=0}^{+\infty}\lambda_jx^j,$$
and call
$$\overline{\lambda}_j=\lambda_j+{\mathbb Z}_p/(p)\in{\mathbb F}_p,\ \forall j\in{\mathbb N}$$
Artin-Hasse coefficients in characteristic $p$.

\begin{definition}For $s\in{\rm Bund}(n,d)$, we define the monomial representation
 of $s$ to be the monomial
$${\rm Mono}(s):=\prod_{w\in{{\rm Simp}(n,d)\cap{\mathbb Z}^n}}\overline{\lambda}_{s(w)}a_w^{s(w)}\in{\mathbb F}_p[a_w\mid w\in{\rm Simp}(n,d)\cap{\mathbb Z}^n].$$
\end{definition}
Notice that the first $p$ Artin-Hasse coefficients $\overline{\lambda}_0,\overline{\lambda}_1,\cdots,\overline{\lambda}_{p-1}$  are all nonzero. So
$$\deg{\rm Mono}(s)=\deg(s)\text{ if }\deg(s)<p.$$
\begin{definition}For $s\in{\rm Bund}(n,d)$, we define the lattice point represented by $s$ to be the vector
$${\rm vec}(s):=\sum_{w\in{{\rm Simp}(n,d)\cap{\mathbb Z}^n}}s(w)w.$$
\end{definition}
It is easy to see that
$$\min_{{\rm vec}(s)=u}\deg(s)=\lceil\deg(u)\rceil.$$
\begin{definition}For $u\in{\mathbb N}^n$, we define ${\rm Bund}(n,d)(u)$ to be the set of sections representing $u$ with minimal degree, i.e.
$${\rm Bund}(n,d)(u):=\{s\in{\rm Bund}(n,d)\mid {\rm vec}(s)=u,\ \deg(s)=\lceil\deg(u)\rceil\}.$$\end{definition}
\begin{definition}The polynomial of lattice point $u\in{\mathbb N}^n$ is defined to be the polynomial
$${\rm Poly}(u):=\sum_{s\in{\rm Bund}(n,d)(u)}{\rm Mono}(s).$$
\end{definition}
\subsection{Twisted Hasse Polynomials}
\begin{definition}For an $a$-tuple $A$ of subsets of fundamental lattices points, we define
$${\rm Poly}(A):=\sum_{\tau\in{\rm Sym}_0^{(1)}(A)}(-1)^{{\rm sgn} \tau}\prod_{l=1}^a\prod_{u\in A_l}{\rm Poly}(pu-\tau_l(u))^{p^{l-1}}.$$
\end{definition}
It is easy to see that, if $A$ is a subset of fundamental lattices points, then
$${\rm Poly}(A^a)=\prod_{l=1}^a{\rm Poly}(A)^{p^{l-1}}.$$
\begin{definition}For a Frobenius vertex $k=\sum_{2j-\epsilon\leq 2i-\iota}h_{j,\epsilon}$, the twisted Hasse polynomials at $k$ are the polynomials
$${\rm TH}^{(a)}(k):=
\sum_{A\in{\rm SF}_0(k)^a}{\rm Poly}(A),\ a=1,2,\cdots.$$
\end{definition}
It is easy to see that, if $k=\sum_{j=0}^ih_j$ is a Hodge vertex, then
$${\rm TH}^{(a)}(k)=\prod_{l=1}^a{\rm Poly}(\cup_{j=0}^iW_j)^{p^{l-1}}.$$
\begin{lemma}If  $p>d^{n+1}$, and $k=\sum_{2j-\epsilon\leq2i-\iota}h_{j,\epsilon}$ is a Frobenius vertex, then,
 when viewed as a polynomial over
$${\mathbb F}_p[a_w\mid w\in{\rm Simp}(n,d)\cap{\mathbb Z}^n],$$
the twisted polynomial ${\rm TH}^{(a)}(k)$
has degree less than $p^a$.
\end{lemma}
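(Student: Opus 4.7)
The strategy is to bound the degree of ${\rm TH}^{(a)}(k)$ in the non-leading variables $\{a_w : \deg w < 1\}$ (with coefficients taken in ${\mathbb F}_p[a_w : \deg w = 1]$) by first establishing a uniform bound on each atomic piece ${\rm Poly}(u)$, and then propagating it through the product/sum structure that defines ${\rm TH}^{(a)}(k)$.

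The heart of the argument is an auxiliary bound: for every lattice point $u \in {\mathbb N}^n$ and every minimal section $s \in {\rm Bund}(n,d)(u)$, the non-leading partial degree
$$\deg(s_<) := \sum_{\deg w < 1} s(w)$$
satisfies $\deg(s_<) \leq d-1$. To prove this I would split $\deg s = \deg(s_=) + \deg(s_<)$, where $s_=$ is the restriction of $s$ to $\{\deg w = 1\}$. Minimality forces $\deg s = \lceil \deg u \rceil$; combining with ${\rm vec}(s)=u$ gives
$$\deg u = \deg(s_=) + \sum_{\deg w < 1} s(w) \deg w.$$
A minimal section supported at $u \neq 0$ never uses $w = 0$, so every $w$ with $\deg w < 1$ in the support satisfies $\deg w \leq (d-1)/d$. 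Hence $\deg u - \deg(s_=) \leq \tfrac{d-1}{d}\deg(s_<)$; substituting $\deg(s_=) = \lceil \deg u \rceil - \deg(s_<)$ and solving yields $\deg(s_<) \leq d(\lceil \deg u \rceil - \deg u) \leq d-1$.

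With this in hand, the bookkeeping is linear. Every monomial in ${\rm Poly}(u) = \sum_s {\rm Mono}(s)$ has non-leading degree at most $d-1$; consequently ${\rm Poly}(pu-\tau_l(u))^{p^{l-1}}$ contributes at most $(d-1)p^{l-1}$. Taking the product over $u \in A_l$ with $|A_l| = k$ and over $l = 1, \ldots, a$ gives non-leading degree at most
$$k(d-1)\sum_{l=1}^a p^{l-1} = k(d-1)\,\frac{p^a-1}{p-1},$$
and the operations of summing over $\tau \in {\rm Sym}_0^{(1)}(A)$ and over $A \in {\rm SF}_0(k)^a$ do not increase this maximum.

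Finally, since $k \leq (d-1)^n$ and $p > d^{n+1}$, the elementary inequality $d^{n+1} \geq (d-1)^{n+1} + 1$ gives $k(d-1) \leq (d-1)^{n+1} \leq p - 2 < p - 1$, so the bound above is strictly less than $p^a$, as required. The only non-mechanical step is the auxiliary bound $\deg(s_<) \leq d - 1$; the hypothesis $p > d^{n+1}$ is then used only to absorb the geometric series $(p^a-1)/(p-1)$ against the factor $k(d-1)$.
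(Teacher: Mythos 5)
Your proposal is correct and follows essentially the same route as the paper: the paper's whole proof consists of the same auxiliary bound $\sum_{\deg w<1}s(w)<d$ for a minimal section $s\in{\rm Bund}(n,d)(u)$, obtained by the identical computation comparing $\deg(u)$ with $\deg(s)=\lceil\deg(u)\rceil$ via $\deg(u)=\sum_{w}s(w)+\sum_{\deg w<1}s(w)(\deg(w)-1)$. The only difference is that the paper leaves the propagation through ${\rm Poly}(pu-\tau_l(u))^{p^{l-1}}$, the products over $u\in A_l$ and $l$, and the numerical step $k(d-1)\frac{p^a-1}{p-1}<p^a$ using $k\leq(d-1)^n$ and $p>d^{n+1}$ as an implicit ``the lemma now follows,'' which you spell out correctly.
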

\begin{proof}
It suffices to show that,
if $u\in{\mathbb N}^n$ and $s\in {\rm Bund}(n,d)(u)$, then
$$\sum_{\deg w<1}s(w)<d.$$
Indeed, as
$$u=\sum_{w\in{\rm Simp}(n,d)\cap{\mathbb Z}^n}s(w)w,$$
and
$$\deg(s)=\lceil\deg(u)\rceil,$$
we have\begin{eqnarray*}
                 \deg(u) &=& \sum_{\stackrel{w\in{\rm Simp}(n,d)\cap{\mathbb Z}^n}{\deg(w)=1}}s(w)
+\sum_{\stackrel{w\in{\rm Simp}(n,d)\cap{\mathbb Z}^n}{\deg(w)<1}}s(w)\deg(w) \\
                  &=& \sum_{w\in{\rm Simp}(n,d)\cap{\mathbb Z}^n}s(w)
+\sum_{\stackrel{w\in{\rm Simp}(n,d)\cap{\mathbb Z}^n}{\deg(w)<1}}s(w)(\deg(w)-1) \\
                  &\leq& \lceil\deg(u)\rceil
-\frac{1}{d}\sum_{\deg w<1}s(w).
               \end{eqnarray*}
The lemma now follows.
\end{proof}
\section{Generic Newton Polygon}
In this section we prove that, if $p>2d$, then
$${\rm GNP}(n,d;p)\geq{\rm PP}(n,d;p).$$
We also prove that, if $p>2d$, and if for each Frobenius vertex $k\leq{d+n-2\choose n}$, there exist a positive integer $a$ and a polynomial
$$f(x)\in{\mathbb F}_{p^a}[x_1,\cdots,x_n]_{(d)}$$
whose leading form is smooth such that
$${\rm TH}^{(a)}(k)(f)\neq0,$$
 then
$${\rm GNP}(n,d;p)={\rm PP}(n,d;p)\text{ on }[0,{d+n-2\choose n}].$$
\subsection{Trace Formula of $a$-th power of Dwork Operator}
\paragraph{}
In this subsection, we review the $p$-adic theory developed by Dwork and Sperber, especially a trace formula relating
the characteristic polynomial of the $a$-power Dwork operator to $L$-function of exponential sums.

Let $q$ be a power of $p$, $\mu_{q-1}$  the group of $(q-1)$-th roots of unity, and ${\mathbb Z}_q={\mathbb Z}_p[\mu_{q-1}]$.
Let $\pi$ be the Artin-Hasse uniformizer of $\zeta_p-1$ in the sense that
$$E(\pi)=\zeta_p.$$
Let
$$f(x)=\sum_{u\in{\rm Simp}(n,d)\cap{\mathbb Z}^n}a_ux^u\in{\mathbb F}_q[x_1,\cdots,x_n]_{(d)},$$
where
$x^u=x_1^{u_1}\cdots x_n^{u_n}$.
We often identify $f$ with the vector
$$(a_u)_{u\in{\rm Simp}(n,d)\cap{\mathbb Z}^n}.$$
And, for brevity, we often identify elements of finite fields with their Teichm\"{u}ller lifts.
Write
$$E_f(x)=\prod_{\deg u\leq1}E(\pi a_ux^u).$$
The function $E_f$ lies in the ${\mathbb Z}_q[\pi^{\frac{1}{d}}]$-module
$$L=\left\{
\sum_{u\in{\mathbb N}^n}
b_u\pi^{\deg u}x^u\mid b_u\in{\mathbb Z}_q[\pi^{\frac{1}{d}}]\right\},$$
and acts on that module by multiplication. Let
$\phi$ be the operator on $L$ defined by the formula
$$\phi(\sum_{u\in{\mathbb N}^n}
b_u\pi^{\deg u}x^u)=\sum_{u\in{\mathbb N}^n}
b_{pu}\pi^{\deg u}x^u.$$
Let
$$B=\left\{
\sum_{u\in{\mathbb N}^n}
b_u\pi^{\deg u}x^u\in L\mid
\lim_{\deg u\rightarrow+\infty}{\rm ord}_p(b_u)=+\infty\right\}.$$
It is easy to see that
$B$ is closed under the composition $\phi\circ E_f$.
 So we regard $\phi\circ E_f$ as an operator on $B$.
Frobenius element of ${\rm Gal}({\mathbb F}_q/{\mathbb F}_p)$, which is denoted as $\sigma$, also acts on $B$
naturally. Let $\partial$ be the map
$$\partial:\oplus_{k=1}^n\pi^{\frac{1}{d}}x_kB\rightarrow B $$
defined by the formula
$$\partial(g_1,g_2,\cdots,g_n)=\sum_{k=1}^n(x_k\frac{\partial}{\partial x_k}+f_k)g_k,$$
where $f_k$'s are defined by the formula
$$d\log \widehat{E}_f(x)=\sum_{k=1}^nf_k\frac{dx_k}{x_k},\ \widehat{E}_f(x)=\prod_{j=0}^{+\infty}E_f^{\sigma^j}(x^{p^j}).$$
It is easy to see that the operators  $\phi\circ E_f$ and $\sigma$ live on the quotient
$$H_0(f)=B/{\rm Im}\partial.$$
So we now regard $\phi\circ E_f$ and $\sigma$ as an operators on $H_0(f)$.

 Sperber [1986] proved that, if   the leading form of $f$ is smooth,  then $H_0(f)$ is a free $Z_q[\pi^{\frac{1}{d}}]$-module of finite rank, and
$$\pi^{\deg u}x^u, \ u\in\{1,\cdots,d-1\}^n$$
represents a basis of $H_0(f)$.

The composition  $\phi_f=\sigma^{-1}\circ\phi\circ E_f$ is called Dwork operator. It acts on $H_0(f)$. Though Dwork operator is only linear over ${\mathbb Z}_p[\pi^{\frac{1}{d}}]$, its $a$-th power $\phi_f^a$ is linear over ${\mathbb Z}_q[\pi^{\frac{1}{d}}]$.

Sperber [1986] proved the following.
\begin{theorem}[trace formula of $a$-th power of Dwork operator]\label{trace-formula}If $q=p^a$, and
  $f\in{\mathbb F}_q[x_1,\cdots,x_n]$ has  smooth leading form, then
 $$L_{f,q}(t)^{(-1)^{n-1}}={\rm det}_{{\mathbb Z}_q[\pi^{\frac{1}{d}}]}(1-\phi_f^at\mid H_0(f)).$$
\end{theorem}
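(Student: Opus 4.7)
The plan is to follow the classical $p$-adic analytic method of Dwork, extended by Sperber to cover the smooth-leading-form case. The argument proceeds in three stages: a trace formula on $B$, an inclusion–exclusion descent from torus to affine space, and a cohomological reduction from $B$ to $H_0(f)$ via the Koszul-type complex built out of $\partial$.

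\emph{Stage 1 (trace formula on $B$).} First I would verify that the Artin-Hasse splitting $E(\pi x)$ lifts the additive character correctly: for a Teichm\"uller representative $x\in\mu_{q^k-1}^n$ of a point in $({\mathbb F}_{q^k})^n$, the infinite product $\widehat{E}_f(x)=\prod_{j\geq 0}E_f^{\sigma^j}(x^{p^j})$ evaluates to $\zeta_p^{{\rm Tr}_{{\mathbb F}_{q^k}/{\mathbb F}_p}(f(x))}$. Then, expanding $(1-\phi_f^a t)^{-1}$ over the monomial pseudo-basis $\{\pi^{\deg u}x^u\}_{u\in{\mathbb N}^n}$ of $B$ and isolating the fixed-point contribution of Frobenius, one obtains the classical Dwork trace formula
$$(q^k-1)^n\operatorname{Tr}(\phi_f^{ak}\mid B)=\sum_{x\in\mu_{q^k-1}^n}\widehat{E}_f(x)=S^{*}_{f,q}(k),$$
where $S^{*}_{f,q}(k)$ is the toric exponential sum over $({\mathbb F}_{q^k}^{\times})^n$.

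\emph{Stage 2 (inclusion–exclusion).} Next I would express $S_{f,q}(k)$ as an alternating sum of toric exponential sums indexed by coordinate faces $I\subseteq\{1,\dots,n\}$, obtained by setting the complementary variables to zero and restricting $f$. Translating the resulting trace identities via $\det(1-\phi_f^a t\mid B)=\exp(-\sum_{k\geq 1}\operatorname{Tr}(\phi_f^{ak}\mid B)\,t^k/k)$ yields an alternating product expression for $L_{f,q}(t)^{(-1)^{n-1}}$ in terms of the characteristic determinants of $\phi_f^a$ on the analogues $B_I$ of $B$ associated to each coordinate face.

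\emph{Stage 3 (Koszul reduction).} The differential $\partial$ extends to a Koszul-type complex whose terms are the $B_I$ and whose only nonzero cohomology, under the smoothness hypothesis, is $H_0(f)$ in degree zero. The vanishing of the higher cohomology reduces—after tracking the leading $\pi$-adic terms of the $f_k$'s—to the algebraic assertion that $x_1\frac{\partial}{\partial x_1}\overline{f},\dots,x_n\frac{\partial}{\partial x_n}\overline{f}$ cut out the empty locus in the homogeneous coordinate ring, which is precisely smoothness of the leading form $\overline{f}$. A standard multiplicative Euler-characteristic argument then collapses the alternating product from Stage 2 into the single determinant $\det_{{\mathbb Z}_q[\pi^{1/d}]}(1-\phi_f^a t\mid H_0(f))$, yielding the claimed identity.

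The main obstacle will be Stage 3: propagating the graded algebraic regularity of the sequence $\{x_k\partial_k\overline{f}\}$ to analytic exactness of $\partial$ on the overconvergent module $B$, while simultaneously controlling $\pi$-adic norms precisely enough to certify that the resulting finite quotient $H_0(f)$ is free of rank $(d-1)^n$ on the monomial basis $\{\pi^{\deg u}x^u\}_{u\in\{1,\dots,d-1\}^n}$. Stages 1 and 2 are essentially formal once the splitting-function identity is in place.
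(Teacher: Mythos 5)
The paper does not actually prove this theorem: it is quoted as Sperber's result [Sperber 1986], so there is no internal proof to compare with, and your outline is precisely the classical Dwork--Sperber argument that the cited source carries out (splitting function and Dwork trace formula on $B$, inclusion--exclusion from the torus to affine space, and the Koszul-type reduction to $H_0(f)$ under smoothness of the leading form). Two remarks. First, a small imprecision in Stage 1: at a Teichm\"uller point $x$ with $x^{q^k}=x$ the additive character value is the \emph{finite} twisted product $\prod_{j=0}^{ak-1}E_f^{\sigma^j}(x^{p^j})$, not the infinite product $\widehat{E}_f(x)$, which in the paper serves only to define the $f_k$ via $d\log$; the trace formula should be stated with the finite product. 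Second, your Stage 3 is a plan rather than a proof: the acyclicity of the complex built from $\partial$ and the freeness of $H_0(f)$ with basis $\pi^{\deg u}x^u$, $u\in\{1,\dots,d-1\}^n$, together with the $\pi$-adic estimates needed to pass from the graded regularity of $x_1\partial_1\overline{f},\dots,x_n\partial_n\overline{f}$ to the analytic statement, constitute essentially all of Sperber's work; you correctly identify this as the main obstacle but do not carry it out, so as written your argument is a faithful roadmap of the cited proof rather than a self-contained one.
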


\subsection{Trace Formula of Dwork Operator}
\paragraph{}
In this subsection we prove a trace formula relating the characteristic polynomial
of Dwork operator to $L$-function of exponential sums.

We extend $\phi_f$ to $H_0(f)\otimes_{{\mathbb Z}_p[\pi^{\frac{1}{d}}]}{\mathbb Z}_q[\pi^{\frac{1}{d}}]$
via the formula
 $$\phi_f(v\otimes c):=\phi(v)\otimes c.$$
Then we identify
$H_0(f)\otimes_{{\mathbb Z}_p[\pi^{\frac{1}{d}}]}{\mathbb Z}_q[\pi^{\frac{1}{d}}]$
with $H_0(f)^{{\mathbb Z}/(a)}$
via the ${\mathbb Z}_q$-linear isomorphism
$$v\otimes c\mapsto c\cdot(\sigma^{i}(v))_{i\in{\mathbb Z}/(a)}.$$
Thus $\phi\circ E_f$ acts on  $H_0(f)^{{\mathbb Z}/(a)}$
via the formula
 $$\phi\circ E_f((v_i)_{i\in{\mathbb Z}/(a)}):=(\phi\circ E_f^{\sigma^i}(v_i))_{i\in{\mathbb Z}/(a)},$$
while
$\sigma^{-1}$ acts on  $H_0(f)^{{\mathbb Z}/(a)}$
via the formula
 $$\sigma^{-1}((v_i)_{i\in{\mathbb Z}/(a)}):=(v_{i-1})_{i\in{\mathbb Z}/(a)}.$$
Write
$$\varepsilon_{w,j}:=(\delta_{ij}\pi^{\deg w}x^w)_{i\in{\mathbb Z}/(a)}\in H_0^{{\mathbb Z}/(a)},$$
where $\delta_{ij}$ is Kronecker's $\delta$-function.
Then the matrix of
$\sigma^{-1}$ on $H_0(f)^{{\mathbb Z}/(a)}$ to the basis $((\varepsilon_{w,j})_{w\in\{1,2,\cdots,d-1\}^n})_{j\in{\mathbb Z}/(a)}$ is $$\left(
    \begin{array}{cccc}
      0 & \cdots &0 & I_{(d-1)^n} \\
     I_{(d-1)^n} & \cdots &0 & 0 \\
      \vdots & \ddots &\vdots & \vdots \\
      0 & \cdots & I_{(d-1)^n}&0 \\
    \end{array}
  \right)
$$
while the matrix of
$\phi\circ E_f$ on $H_0(f)^{{\mathbb Z}/(a)}$ to that basis is $$\left(
    \begin{array}{cccc}
      M &0 &\cdots & 0 \\
     0 &M^{\sigma} & \cdots & 0 \\
      \vdots &\vdots& \ddots & \vdots \\
      0 & 0&\cdots & M^{\sigma^{a-1}} \\
    \end{array}
  \right)
$$
where $M$ is the matrix of $\phi\circ E_f$ on $H_0(f)$.
It follows that
$${\rm Tr}_{{\mathbb Z}_q[\pi^{\frac{1}{d}}]}(\phi_f\mid H_0^{{\mathbb Z}/(a)})=0\text{ if } a>1.$$
Similarly, as
$$\phi_f^k=\sigma^{-k}\circ\phi^{k}\circ\prod_{j=0}^{k-1} E_f^{\sigma^j},$$
we can prove the following lemma.
\begin{lemma}If $q=p^a$, and
  $f\in{\mathbb F}_q[x_1,\cdots,x_n]$ has  smooth leading form, then
$${\rm Tr}_{{\mathbb Z}_q[\pi^{\frac{1}{d}}]}(\phi_f^k\mid H_0(f)^{{\mathbb Z}/(a)})=0\text{ whenever } k\not\equiv 0( {\rm mod } \ a).$$
\end{lemma}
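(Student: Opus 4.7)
The plan is to promote the $k=1$ computation already displayed in the excerpt to general $k$ by the same block-matrix bookkeeping. The excerpt has presented $H_0(f)^{{\mathbb Z}/(a)}$ with the explicit basis $((\varepsilon_{w,j})_{w\in\{1,\ldots,d-1\}^n})_{j\in{\mathbb Z}/(a)}$, and has displayed the matrix of $\sigma^{-1}$ as the cyclic block-shift (zeros off the superdiagonal, with $I_{(d-1)^n}$ on the block positions $(i,i-1)$), and the matrix of $\phi\circ E_f$ as the block-diagonal $\mathrm{diag}(M,M^{\sigma},\ldots,M^{\sigma^{a-1}})$. I would use the identity
$$\phi_f^k=\sigma^{-k}\circ\phi^{k}\circ\prod_{j=0}^{k-1} E_f^{\sigma^j},$$
already noted before the lemma, and interpret the two factors $\sigma^{-k}$ and $\phi^k\circ\prod_{j=0}^{k-1}E_f^{\sigma^j}$ in the same basis.

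First I would observe that $\sigma^{-k}$, acting as $(v_i)_{i}\mapsto(v_{i-k})_i$, has matrix with $I_{(d-1)^n}$ in every block position $(i,i-k)$ (indices mod $a$) and zero elsewhere. Second, $\phi^{k}\circ\prod_{j=0}^{k-1}E_f^{\sigma^j}$ still respects the decomposition $H_0(f)^{{\mathbb Z}/(a)}=\bigoplus_{j}H_0(f)$: on the $j$-th summand it acts by some operator $N_j$ (a product of $k$ twists of $\phi\circ E_f$), so its matrix is block-diagonal $\mathrm{diag}(N_0,N_1,\ldots,N_{a-1})$. Multiplying the two, the $(i,j)$-block of $\phi_f^k$ is nonzero only when $j\equiv i-k\pmod a$, and in that case equals (essentially) $N_{i-k}$.

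Third, I would read off the diagonal blocks of $\phi_f^k$: the $(i,i)$-block is nonzero iff $i\equiv i-k\pmod a$, i.e.\ iff $k\equiv 0\pmod a$. Consequently, if $k\not\equiv 0\pmod a$, every diagonal block of $\phi_f^k$ is zero, so
$${\rm Tr}_{{\mathbb Z}_q[\pi^{\frac{1}{d}}]}(\phi_f^k\mid H_0(f)^{{\mathbb Z}/(a)})=\sum_{i\in{\mathbb Z}/(a)}{\rm Tr}\bigl((\phi_f^k)_{i,i}\bigr)=0.$$
This is exactly the pattern the excerpt flagged for $k=1$ (where the diagonal blocks are visibly zero as long as $a>1$), generalised to arbitrary $k\not\equiv 0\pmod a$.

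There is no real obstacle: the only thing to check carefully is the bookkeeping of indices mod $a$ when passing from the matrices of $\sigma^{-1}$ and $\phi\circ E_f$ to their $k$-fold composite, and the fact that the factor $\phi^k\circ\prod_{j=0}^{k-1}E_f^{\sigma^j}$ remains block-diagonal in the given basis (which is immediate because each $E_f^{\sigma^j}$ acts within a single $H_0(f)$-summand and $\phi$ commutes with the direct-sum decomposition before $\sigma^{-1}$ is applied). Once the block structure is written down, the vanishing of the trace is formal.
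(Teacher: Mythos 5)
Your proposal is correct and follows essentially the same route as the paper: the paper derives the lemma by the same block-matrix bookkeeping, citing the factorization $\phi_f^k=\sigma^{-k}\circ\phi^{k}\circ\prod_{j=0}^{k-1}E_f^{\sigma^j}$ and the fact that the second factor is block-diagonal while $\sigma^{-k}$ is a cyclic block shift by $k$, so all diagonal blocks vanish when $k\not\equiv 0\ ({\rm mod}\ a)$. Your write-up simply makes explicit the index bookkeeping that the paper leaves as ``similarly.''
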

\begin{theorem}[trace formula of Dwork operator]If
  $f\in{\mathbb F}_q[x_1,\cdots,x_n]$ has  smooth leading form, and $q=p^a$,  then
 $$L_{f,q}(t^a)^{(-1)^{n-1}}={\rm det}_{{\mathbb Z}_q[\pi^{\frac{1}{d}}]}(1-\phi_f t\mid H_0(f)^{{\mathbb Z}/(a)}).$$
\end{theorem}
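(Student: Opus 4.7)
The plan is to combine the preceding vanishing lemma with the trace--logarithm identity and reduce the claim to Theorem~\ref{trace-formula} via the substitution $t\mapsto t^a$.

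First I would take logarithms and expand
$$\log{\rm det}_{{\mathbb Z}_q[\pi^{\frac{1}{d}}]}(1-\phi_f t\mid H_0(f)^{{\mathbb Z}/(a)})^{-1}=\sum_{k=1}^\infty\frac{{\rm Tr}_{{\mathbb Z}_q[\pi^{\frac{1}{d}}]}(\phi_f^k\mid H_0(f)^{{\mathbb Z}/(a)})}{k}\,t^k.$$
By the preceding lemma, all summands with $k\not\equiv0\,({\rm mod}\ a)$ vanish, leaving only the terms $k=am$ with $m\geq1$.

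Next I would compute the remaining traces via the block matrix structure. From the cyclic-shift presentation of $\phi_f$ recorded just before the lemma, a direct block-matrix calculation shows that $\phi_f^a$ is block-diagonal on $H_0(f)^{{\mathbb Z}/(a)}$, with the $j$-th diagonal block equal to the $\sigma^j$-Galois conjugate of the product $M^{\sigma^{a-1}}M^{\sigma^{a-2}}\cdots M^{\sigma}M$ (here one uses $\sigma^a=1$ on ${\mathbb Z}_q$). By the standard Dwork-theoretic commutation of $\phi$ with multiplication by the twists $E_f^{\sigma^j}$, this cyclic product coincides with the matrix of the ${\mathbb Z}_q$-linear operator $\phi_f^a$ on $H_0(f)$ used in Theorem~\ref{trace-formula}. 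Hence
$${\rm Tr}(\phi_f^{am}\mid H_0(f)^{{\mathbb Z}/(a)})=\sum_{j=0}^{a-1}\sigma^j\!\left({\rm Tr}(\phi_f^{am}\mid H_0(f))\right).$$
Since $L_{f,q}(t)^{(-1)^{n-1}}={\rm det}(1-\phi_f^at\mid H_0(f))$ by Theorem~\ref{trace-formula} is a polynomial with rational integer coefficients (by Deligne, or directly from the fact that the exponential sums $S_{f,q}(m)$ are Galois-invariant sums of $p$-th roots of unity), the trace ${\rm Tr}(\phi_f^{am}\mid H_0(f))$ lies in the $\sigma$-fixed subring ${\mathbb Z}_p[\pi^{\frac{1}{d}}]$, and the sum collapses to $a\cdot{\rm Tr}(\phi_f^{am}\mid H_0(f))$.

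Substituting back gives
$$\log{\rm det}(1-\phi_f t\mid H_0(f)^{{\mathbb Z}/(a)})^{-1}=\sum_{m=1}^\infty\frac{{\rm Tr}(\phi_f^{am}\mid H_0(f))}{m}\,(t^a)^m=\log{\rm det}(1-\phi_f^a t^a\mid H_0(f))^{-1},$$
so the two determinants agree. Applying Theorem~\ref{trace-formula} with $t$ replaced by $t^a$ finishes the proof. The main obstacle is the identification of the cyclic block product $M^{\sigma^{a-1}}\cdots M^{\sigma}M$ with the matrix of $\phi_f^a$ on $H_0(f)$; this is precisely where the Dwork-type identity $\phi\circ(g(x^p)\cdot)=g(x)\circ\phi$ must be invoked to commute $\phi$ past the multiplication operators $E_f^{\sigma^j}$. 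Once that identification is in hand, the remainder of the argument is a formal manipulation of the logarithmic series together with Galois invariance of the trace.
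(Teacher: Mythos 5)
Your argument is sound and reaches the theorem, but it takes a more computational route than the paper. The paper never computes the traces at multiples of $a$: it first notes that scalar extension leaves characteristic polynomials unchanged, so ${\rm det}_{{\mathbb Z}_p[\pi^{1/d}]}(1-\phi_f^at^a\mid H_0(f))={\rm det}_{{\mathbb Z}_q[\pi^{1/d}]}(1-\phi_f^at^a\mid H_0(f)^{{\mathbb Z}/(a)})$, then uses the vanishing lemma only to conclude that ${\rm det}_{{\mathbb Z}_q[\pi^{1/d}]}(1-\phi_f t\mid H_0(f)^{{\mathbb Z}/(a)})$ is a series in $t^a$, whence ${\rm det}(1-\phi_f^at^a\mid H_0^{{\mathbb Z}/(a)})={\rm det}(1-\phi_f t\mid H_0^{{\mathbb Z}/(a)})^a$, and finally applies Theorem~\ref{trace-formula} (raised to the $a$-th power, since the ${\mathbb Z}_p$-determinant is the norm of the ${\mathbb Z}_q$-determinant) and extracts $a$-th roots of two series with constant term $1$. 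You instead evaluate ${\rm Tr}(\phi_f^{am}\mid H_0^{{\mathbb Z}/(a)})$ directly from the block-cyclic matrix and identify it with $a\,{\rm Tr}(\phi_f^{am}\mid H_0(f))$, which lets you avoid both the scalar-extension comparison and the $a$-th-root extraction, at the price of the cyclic-product identification you correctly single out as the main obstacle. Both proofs rest on the same two inputs (the vanishing lemma and Theorem~\ref{trace-formula}), so the difference is one of bookkeeping rather than of substance.

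One justification in your middle step is wrong as stated, though easily repaired: $L_{f,q}(t)^{(-1)^{n-1}}$ does \emph{not} have rational integer coefficients in general, and the exponential sums $S_{f,q}(m)$ are not Galois-invariant over ${\mathbb Q}$ --- they are elements of ${\mathbb Z}[\zeta_p]$. What you actually need is that ${\rm Tr}(\phi_f^{am}\mid H_0(f))$ is fixed by the unramified Frobenius $\sigma$, and this does hold because the characteristic polynomial's coefficients lie in ${\mathbb Z}[\zeta_p]\subset{\mathbb Z}_p[\pi^{1/d}]$ and $\sigma$ fixes $\pi$ (hence $\zeta_p=E(\pi)$). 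Even simpler, no Galois input is needed at all: the diagonal blocks of $\phi_f^a$ on $H_0(f)^{{\mathbb Z}/(a)}$ are cyclic rotations $M^{\sigma^{j-1}}\cdots M^{\sigma^{j-a}}$ of one another, and traces of powers are invariant under cyclic rotation, so all $a$ blocks contribute the same trace and the factor $a$ appears directly. With that correction your proof is complete.
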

\begin{proof}
As scalar extension does not change the characteristic polynomial of an operator, we have
 $${\rm det}_{{\mathbb Z}_p[\pi^{\frac{1}{d}}]}(1-\phi_f^at^a\mid H_0(f))={\rm det}_{{\mathbb Z}_q[\pi^{\frac{1}{d}}]}(1-\phi_f^a t^a\mid H_0(f)^{{\mathbb Z}/(a)}).$$
By the last lemma,
$${\rm det}_{{\mathbb Z}_q[\pi^{\frac{1}{d}}]}(1-\phi^a t^a\mid H_0(f)^{{\mathbb Z}/(a)})={\rm det}_{{\mathbb Z}_q[\pi^{\frac{1}{d}}]}(1-\phi_f t\mid H_0(f)^{{\mathbb Z}/(a)})^a.$$
By the trace formula of $a$-th power of Dwork operator,
 $$L_{f,q}(t^a)^{(-1)^{n-1}a}={\rm det}_{{\mathbb Z}_p[\pi^{\frac{1}{d}}]}(1-\phi_f^at^a\mid H_0(f)).$$
It follows that
$$
L_{f,q}(t^a)^{(-1)^{n-1}}={\rm det}_{{\mathbb Z}_q[\pi^{\frac{1}{d}}]}(1-\phi_f t\mid H_0(f)^{{\mathbb Z}/(a)}).$$
The theorem is proved.
\end{proof}
From the above corollary and the result of last subsection, we can deduce the following.
 The above theorem can be generalized to the exponential sums studied in [Adolphson and  Sperber 1987; 1989]. It can even be generalized to the $F$-crystals studied in [Katz 1979].
\subsection{Coefficients of $L$-function}
\paragraph{}
In this subsection we prove the following explicit formula for the coefficients of $L$-function of exponential sums.
\begin{theorem}[explicit formula]Assume that
  $f\in{\mathbb F}_q[x_1,\cdots,x_n]$ has  smooth leading form. Write
$$L_{f,q}(t)^{(-1)^{n-1}}=1+\sum_{k=1}^{(d-1)^n}(-1)^k\nu_k(f)t^k,$$
and
$$\phi\circ E_f(\pi^{\deg w}x^w)=\sum_{u\in\{1,2,\cdots,d-1\}^n}\pi^{\deg u-\deg w}c_{u,w}(f)\pi^{\deg u}x^u.$$
Then
$$\nu_k(f)=\sum_{A\in({\rm SF}(k))^a}\prod_{l\in{\mathbb Z}/(a)}
\det(c_{u,w}(f)^{\sigma^{l-1}})_{u\in A_l,w\in A_{l-1}},$$
where $q=p^a$.
\end{theorem}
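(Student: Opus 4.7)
The plan is to apply the trace formula of the Dwork operator,
$$L_{f,q}(t^a)^{(-1)^{n-1}}={\rm det}_{{\mathbb Z}_q[\pi^{\frac{1}{d}}]}(1-\phi_f t\mid H_0(f)^{{\mathbb Z}/(a)}),$$
and expand the right-hand side directly in the basis $\{\varepsilon_{w,j}\}$ of the previous subsection. Combining the explicit actions of $\sigma^{-1}$ and $\phi\circ E_f$ on this basis with the definition of $c_{u,w}(f)$, one first checks
$$\phi_f(\varepsilon_{w,j})=\sum_{u\in\{1,\ldots,d-1\}^n}\pi^{\deg u-\deg w}\,c_{u,w}(f)^{\sigma^j}\,\varepsilon_{u,\,j+1}.$$
Thus the matrix $M$ of $\phi_f$ in this basis has a staircase structure: $M_{(u,i),(w,j)}=0$ unless $i=j+1$ in ${\mathbb Z}/(a)$.

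Next, I expand the characteristic polynomial as
$$\det(1-\phi_f t)=\sum_{S}(-t)^{|S|}\det(M|_S),$$
with $S$ running over subsets of the basis index set $\{1,\ldots,d-1\}^n\times{\mathbb Z}/(a)$. Writing $S_j:=\{u:(u,j)\in S\}$, the staircase structure forces every nonvanishing term of $\det(M|_S)$ to come from a permutation whose restriction is a bijection $\tau_j\colon S_j\to S_{j+1}$ for each $j$; in particular $|S_j|$ is independent of $j$, so $|S|=ak$ with $k:=|S_0|$. For such an $S$, the total $\pi$-exponent $\sum_j\sum_{w\in S_j}(\deg\tau_j(w)-\deg w)$ telescopes to $0$ around ${\mathbb Z}/(a)$, and summing over the $\tau_j$'s gives
$$\det(M|_S)=\epsilon_{a,k}\,\prod_{j\in{\mathbb Z}/(a)}\det\bigl(c_{u,w}(f)^{\sigma^j}\bigr)_{u\in S_{j+1},\,w\in S_j},$$
where $\epsilon_{a,k}$ is the sign of the block cyclic shift on $a$ blocks of size $k$.

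Since this cyclic shift decomposes into $k$ cycles of length $a$, one has $\epsilon_{a,k}=(-1)^{k(a-1)}$; together with $(-t)^{ak}=(-1)^{ak}t^{ak}$ this yields the overall sign $(-1)^{k(a-1)+ak}=(-1)^k$. Reindexing $l:=j+1$ and setting $A_l:=S_l$, the coefficient of $t^{ak}$ on the right-hand side of the trace formula becomes
$$(-1)^k\sum_{A\in{\rm SF}(k)^a}\prod_{l\in{\mathbb Z}/(a)}\det\bigl(c_{u,w}(f)^{\sigma^{l-1}}\bigr)_{u\in A_l,\,w\in A_{l-1}}.$$
Matching this against the coefficient $(-1)^k\nu_k(f)$ of $t^{ak}$ on the left yields the claimed identity. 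The main delicate point is the sign $\epsilon_{a,k}$ and its precise cancellation against $(-1)^{ak}$, which ensures no extraneous sign appears in the final formula; everything else is routine determinantal expansion guided by the staircase block structure of $M$.
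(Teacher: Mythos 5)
Your argument is correct and is essentially the paper's own proof: both start from the trace formula of the Dwork operator, expand the characteristic polynomial on $H_0(f)^{{\mathbb Z}/(a)}$ into principal minors, use the staircase (block-cyclic) structure of $\phi_f$ to discard the minors whose blocks $S_j$ have unequal sizes, and factor the surviving size-$ak$ minors into the products $\prod_{l}\det\bigl(c_{u,w}(f)^{\sigma^{l-1}}\bigr)_{u\in A_l,\,w\in A_{l-1}}$. The only cosmetic differences are that you kill the unbalanced minors via the permutation expansion where the paper argues by linear dependence of the columns indexed by the largest block, and that you make explicit the cancellation of the $\pi$-powers and the sign bookkeeping $(-1)^{ak}(-1)^{(a-1)k}=(-1)^k$, which the paper leaves implicit in ``the theorem now follows.''
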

\begin{proof}By the trace formula of Dwork operator,
$$\nu_k(f)=(-1)^{(a-1)k}\sum_{\stackrel{|S|=ak}{S\subseteq\{1,2,\cdots,d-1\}^n\times{\mathbb Z}/(a)}}\det(\delta_{i,l}c_{u,w}(f)^{\sigma^{l-1}})_{(u,i),(w,l)\in S}.$$
We claim that, if $S=\cup_{i\in{\mathbb Z}/(a)}A_i\times\{i\}$ with $|A_{i_0}|>k$, then
$$\det(\delta_{i,l}c_{u,w}(f)^{\sigma^{l-1}})_{(u,i),(w,l)\in S}
    =0.
$$
We may assume that $|A_{i_0}|>|A_{i_0+1}|$.
Then the columns
of the matrix $$(\delta_{i,l}c_{u,w}(f)^{\sigma^{l-1}})_{(u,i),(w,l)\in S}$$
indexed by $A_{i_0}\times\{i_0\}$ are linearly dependent as their coefficients are all zero except
those in the rows indexed by $A_{i_0+1}\times\{i_0+1\}$. It follows that
$$\det(\delta_{i,l}c_{u,w}(f)^{\sigma^{l-1}})_{(u,i),(w,l)\in S}=0.$$
The theorem now follows.\end{proof}
 \subsection{Lower Bound Estimation}
\paragraph{}
In this subsection prove the following theorem, which combined with the coincidence of premium polygon and Frobenius polygon, gives Theorem \ref{main-theorem-lower-bound-1}.
\begin{theorem}If $p>2d$, then
$${\rm GNP}(n,d;p)\geq{\rm PP}(n,d;p).$$\end{theorem}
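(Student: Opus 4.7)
\medskip\noindent\textbf{Proof proposal.} The plan is to combine the explicit formula for $\nu_k(f)$ derived in the previous subsection with a Hodge-type $\pi$-adic bound on the normalized matrix entries of $\phi\circ E_f$ on $H_0(f)$, expand the resulting determinants over bijections and assemble them into twisted permutations in ${\rm Sym}^{(1)}(A)$, and finally transfer the pointwise estimate ${\rm ord}_q\nu_k(f)\geq{\rm Prem}(k)$ to the Newton polygon via monotonicity of the lower convex hull. Taking the infimum over $f$ with smooth leading form then yields the theorem.

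The central ingredient is the estimate
$${\rm ord}_\pi(c_{u,w}(f))\ \geq\ \lceil\deg(pu-w)\rceil\qquad(u,w\in\{1,\ldots,d-1\}^n)$$
for the normalized matrix entries defined by $\phi\circ E_f(\pi^{\deg w}x^w)=\sum_u\pi^{\deg u-\deg w}c_{u,w}(f)\pi^{\deg u}x^u$ in the basis $\{\pi^{\deg u}x^u\}$ of $H_0(f)$. The naive Artin--Hasse bound ${\rm ord}_\pi(\text{coefficient of }x^v\text{ in }E_f)\geq\lceil\deg v\rceil$, applied to $v=pu-w$, handles the ``free'' contribution; the hypothesis $p>2d$ keeps the Artin--Hasse coefficients $\overline{\lambda}_j$ nonzero in the operative range and is used to ensure that the cohomological reduction modulo ${\rm Im}\partial$ inside $H_0(f)$ preserves this valuation. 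Since $\sigma$ fixes $\pi$, the same estimate is inherited by each conjugate $c_{u,w}(f)^{\sigma^{l-1}}$.

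Expand the $k\times k$ determinant $\det(c_{u,w}(f)^{\sigma^{l-1}})_{u\in A_l,\,w\in A_{l-1}}$ as an alternating sum over bijections $\tau_l:A_l\to A_{l-1}$, and then expand the product over $l\in\mathbb{Z}/(a)$; the surviving monomials are indexed precisely by twisted permutations $\tau=(\tau_l)_l\in{\rm Sym}^{(1)}(A)$. By the estimate above, each such monomial has $\pi$-adic valuation at least
$$\sum_{l\in\mathbb{Z}/(a)}\sum_{u\in A_l}\lceil\deg(pu-\tau_l(u))\rceil\ =\ a(p-1)\,{\rm Prem}(\tau)\ \geq\ a(p-1)\,{\rm Prem}(A)\ \geq\ a(p-1)\,{\rm Prem}(k).$$
Converting via ${\rm ord}_q={\rm ord}_\pi/(a(p-1))$ and summing over $A\in({\rm SF}(k))^a$ gives ${\rm ord}_q\nu_k(f)\geq{\rm Prem}(k)$ for every $k\in[0,(d-1)^n]$.

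Because ${\rm Prem}(k)\geq{\rm PP}(n,d;p)(k)$ by the definition of the premium polygon as the convex hull of the points $(k,{\rm Prem}(k))$, and because ${\rm NP}(f)$ is by definition the lower convex hull of the points $(k,{\rm ord}_q\nu_k(f))$, the monotonicity of the lower convex hull with respect to pointwise inequalities of the data forces ${\rm NP}(f)\geq{\rm PP}(n,d;p)$; taking the infimum over $f$ with smooth leading form and nonvanishing discriminant yields ${\rm GNP}(n,d;p)\geq{\rm PP}(n,d;p)$. The main obstacle is the Hodge-type $\pi$-adic estimate on $c_{u,w}(f)$: the Artin--Hasse expansion of $E_f$ gives the bound only in the ambient ring $B$, and the delicate step is to show that the reduction modulo ${\rm Im}\partial$ built into the Sperber basis of $H_0(f)$ does not degrade it, which is exactly where the hypothesis $p>2d$ is put to work; by contrast, the combinatorial assembly of bijections across $\mathbb{Z}/(a)$ into a twisted permutation and the final lower-convex-hull step are routine.
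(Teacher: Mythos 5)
Your proposal is correct and follows essentially the same route as the paper: the explicit determinant formula for $\nu_k(f)$, the key estimate ${\rm ord}_\pi(c_{u,w}(f))\geq\lceil\deg(pu-w)\rceil$ obtained (following Sperber) from the fact that the reduction to the basis $\{\pi^{\deg u}x^u\}$ of $H_0(f)$ kills contributions of lower degree, the identification of each surviving monomial's valuation with $a(p-1){\rm Prem}(\tau)$, and the passage ${\rm ord}_q\nu_k(f)\geq{\rm Prem}(k)\Rightarrow{\rm NP}(f)\geq{\rm PP}(n,d;p)\Rightarrow{\rm GNP}(n,d;p)\geq{\rm PP}(n,d;p)$. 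The only difference is cosmetic: you make the expansion over twisted permutations and the $\pi$-to-$q$ normalization explicit, while leaving the Sperber-type non-degradation of the valuation under reduction modulo ${\rm Im}\,\partial$ at the same level of citation as the paper does.
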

\begin{proof}Let $f(x)\in\overline{{\mathbb F}}_p[x_1,\cdots,x_n]$ have smooth leading form. Write
$$E_f(x)=\sum_{u\in {\mathbb N}^n}\gamma_u(f)x^u.$$
It is easy to see that
$${\rm ord}_{\pi}(\gamma_u(f))\geq\lceil\deg u\rceil.$$
Write
$$\phi\circ E_f(\pi^{\deg w}x^w)=\sum_{u\in\{1,2,\cdots,d-1\}^n}\pi^{\deg u-\deg w}c_{u,w}(f)\pi^{\deg u}x^u.$$
Following Sperber [1986], we can prove that, if $u\in\{1,\cdots,d-1\}^n$, and
$v\in{\mathbb N}^n$ satisfies $\deg v<\deg u$,  then
$$\langle\pi^{\deg v}x^v,\pi^{\deg u}x^u\rangle=0\text{ on }  H_0(f).$$
It follows that
$$c_{u,w}(f)\equiv\sum_{\stackrel{v\in{\mathbb N}^n}{\deg v=\deg u}}\gamma_{pv-w}(f)\langle\pi^{\deg v}x^v,\pi^{\deg u}x^u\rangle ({\rm mod}\ \pi^{\lceil\deg(pu-w)\rceil+1}).$$
Hence
$${\rm ord}_{\pi}(c_{u,w}(f))\geq\lceil\deg(pu-w)\rceil.$$
Write
$$L_{f,q}(t)^{(-1)^{n-1}}=1+\sum_{k=1}^{(d-1)^n}(-1)^k\nu_k(f)t^k,$$
By the explicit formula for the coefficients of $L$-function,
$${\rm ord}_q(\nu_k(f))\geq{\rm Prem}(k),\ \forall k=0,1,\cdots,(d-1)^n.$$
Therefore
$${\rm NP}(f)\geq{\rm PP}(n,d;p).$$
Hence
$${\rm GNP}(n,d;p)\geq{\rm PP}(n,d;p).$$
The theorem is proved.\end{proof}
\subsection{The Exact Bound Estimation}
\paragraph{}
In this subsection  we prove the following theorem.
\begin{theorem}If $p>2d$, $k\leq{d+n-2\choose n}$ is a Frobenius vertex,  and
$$f(x)\in{\mathbb F}_{p^a}[x_1,\cdots,x_n]_{(d)}$$
has smooth leading form, then
$$\pi^{-a(p-1){\rm Prem}(k)}\nu_k(f)\equiv{\rm TH}^{(a)}(k)(f)({\rm mod}\ \pi).$$\end{theorem}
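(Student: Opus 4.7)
The plan is to expand $\nu_k(f)$ using the explicit formula of the previous subsection and isolate its $\pi$-adic leading coefficient. Writing each determinant in that formula as a signed sum over twisted permutations yields
\begin{equation*}
\nu_k(f) = \sum_{A \in {\rm SF}(k)^a} \sum_{\tau \in {\rm Sym}^{(1)}(A)} (-1)^{{\rm sgn}(\tau)} \prod_{l \in {\mathbb Z}/(a)} \prod_{u \in A_l} c_{u,\tau_l(u)}(f)^{\sigma^{l-1}}.
\end{equation*}
Invoking the pointwise estimate ${\rm ord}_{\pi}(c_{u,w}(f)) \geq \lceil \deg(pu-w)\rceil$ from the proof of the previous theorem, each summand has $\pi$-adic valuation at least $a(p-1){\rm Prem}(\tau)$. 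By Lemma~\ref{premium-strict-lower-bound}, every $A \notin {\rm SF}_0(k)^a$ contributes to order at least $a(p-1){\rm Prem}(k) + [\frac{p}{d}] - 1 \geq a(p-1){\rm Prem}(k) + 1$ (the last step uses $p>2d$); and within each $A \in {\rm SF}_0(k)^a$, by definition only $\tau \in {\rm Sym}_0^{(1)}(A)$ realize the minimal premium, so the remaining $\tau$ again contribute to strictly higher order.

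The crucial step is to pin down the $\pi$-adic leading coefficient of $c_{u,w}(f)$. Starting from the congruence
\begin{equation*}
c_{u,w}(f) \equiv \sum_{\substack{v \in {\mathbb N}^n \\ \deg v = \deg u}} \gamma_{pv-w}(f)\, \langle \pi^{\deg v}x^v, \pi^{\deg u}x^u \rangle \pmod{\pi^{\lceil \deg(pu-w)\rceil + 1}}
\end{equation*}
used in the proof of the previous theorem, I will argue that only the term $v=u$ contributes at the leading $\pi$-order: for $v \in \{1,\ldots,d-1\}^n$ with $\deg v=\deg u$ the pairing is $\delta_{v,u}$; for $v$ outside the basis cube, the reduction of $\pi^{\deg v}x^v$ in $H_0(f)$ via ${\rm Im}(\partial)$ pulls in factors of $f_k$, which is divisible by $\pi$ because the Artin--Hasse logarithm $\log\widehat{E}_f$ starts at order $\pi$. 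Hence $c_{u,w}(f) \equiv \gamma_{pu-w}(f) \pmod{\pi^{\lceil \deg(pu-w)\rceil + 1}}$. Then a direct expansion of the Artin--Hasse product defining $E_f$ shows $\gamma_{pu-w}(f) \equiv \pi^{\lceil\deg(pu-w)\rceil}\,{\rm Poly}(pu-w)(f) \pmod{\pi^{\lceil \deg(pu-w)\rceil + 1}}$.

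Assembling, and using that $\sigma^{l-1}$ descends to the $p^{l-1}$-power Frobenius on ${\mathbb F}_{p^a}$ so that ${\rm Poly}(pu-\tau_l(u))(f)^{\sigma^{l-1}} = {\rm Poly}(pu-\tau_l(u))(f)^{p^{l-1}}$, the surviving double sum collapses to $\sum_{A \in {\rm SF}_0(k)^a} {\rm Poly}(A)(f) = {\rm TH}^{(a)}(k)(f)$, yielding the asserted congruence. The main obstacle I anticipate is the uniform control of the off-diagonal pairings $\langle \pi^{\deg v}x^v, \pi^{\deg u}x^u \rangle$ for $v$ outside the basis cube, which requires carefully tracing the iterative reduction in $H_0(f)$ and checking that no symbol-level cancellation happens before the $\pi$-order is strictly raised. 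The hypothesis $k \leq {d+n-2 \choose n}$ most likely enters precisely here, confining the sets $A_l \in {\rm SF}_0(k)$ and therefore the differences $pu - \tau_l(u)$ to a regime in which these reductions introduce only higher-order corrections and the clean identification with ${\rm Poly}(pu-w)(f)$ goes through.
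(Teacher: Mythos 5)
Your proposal is correct and takes essentially the same route as the paper: expand $\nu_k(f)$ by the explicit formula, discard all terms not coming from ${\rm SF}_0(k)^a$ and ${\rm Sym}_0^{(1)}(A)$ using the premium lemmas and $p>2d$, and identify the $\pi$-adic leading coefficient of $c_{u,w}(f)$ with ${\rm Poly}(pu-w)(f)$ via $c_{u,w}(f)\equiv\gamma_{pu-w}(f)\ ({\rm mod}\ \pi^{\lceil\deg(pu-w)\rceil+1})$. The off-diagonal pairing control you flag as the main anticipated obstacle is exactly what the paper disposes of by invoking Sperber's vanishing $\langle\pi^{\deg v}x^v,\pi^{\deg u}x^u\rangle=0$ for $\deg v<\deg u$, with the hypothesis $k\leq{d+n-2\choose n}$ serving, as you guessed, to keep all relevant $u,w$ of degree at most $1$ so that this congruence applies.
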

\begin{proof}We adopt the notations of last subsection.
Following Sperber [1986], we can prove that, if $u\in\{1,\cdots,d-1\}^n$, and
$v\in{\mathbb N}^n$ satisfies $\deg v<\deg u$,  then
$$\langle\pi^{\deg v}x^v,\pi^{\deg u}x^u\rangle=0\text{ on }  H_0(f).$$
It follows that, if $\deg u,\deg w\leq1$, then
$$c_{u,w}(f)\equiv\gamma_{pu-w}(f)({\rm mod}\ \pi^{\lceil\deg(pu-w)\rceil+1}).$$
It is easy to see that
 $$\pi^{-\lceil\deg(u)\rceil}\gamma_u(f)\equiv{\rm Poly}(u)(f)({\rm mod}\ \pi).$$
Thus, if $\deg u,\deg w\leq1$, then
 $$\pi^{-\lceil\deg(pu-w)\rceil}c_{u,w}(f)\equiv{\rm Poly}(pu-w)(f)({\rm mod}\ \pi).$$
If $k\leq{d\choose 2}$ is a Frobenius vertex, then by the explicit formula for the coefficients of $L$-function,
$$\pi^{-a(p-1){\rm Prem}(k)}\nu_k(f)\equiv{\rm TH}^{(a)}(k)(f)({\rm mod}\ \pi).$$
The theorem is proved.
\end{proof}
From the above theorem, we can infer the following corollary.
\begin{corollary}If $p>2d$, and for each Frobenius vertex $k\leq{d+n-2\choose n}$, there exist a positive integer $a$ and a polynomial
$$f(x)\in{\mathbb F}_{p^a}[x_1,\cdots,x_n]_{(d)}$$
whose leading form is smooth such that
$${\rm TH}^{(a)}(k)(f)\neq0,$$
 then
$${\rm GNP}(n,d;p)={\rm PP}(n,d;p)\text{ on }[0, {d+n-2\choose n}].$$\end{corollary}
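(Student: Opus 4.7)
The plan is to turn the explicit congruence in the preceding theorem into a Newton polygon equality, vertex by vertex, and then propagate equality across each linear segment of the premium polygon.

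Fix a Frobenius vertex $k\leq{d+n-2\choose n}$. By hypothesis there exist $a$ and $f\in{\mathbb F}_{p^a}[x_1,\cdots,x_n]_{(d)}$ with smooth leading form such that ${\rm TH}^{(a)}(k)(f)\neq 0$. The preceding theorem then gives
$$\pi^{-a(p-1){\rm Prem}(k)}\nu_k(f)\equiv{\rm TH}^{(a)}(k)(f)\not\equiv 0\pmod{\pi},$$
so ${\rm ord}_{p^a}(\nu_k(f))={\rm Prem}(k)$. By the standard bound ${\rm NP}(f)(k)\leq{\rm ord}_{p^a}(\nu_k(f))$, the Newton polygon of $f$ at $k$ is at most ${\rm Prem}(k)={\rm PP}(n,d;p)(k)$. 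Taking the infimum over admissible $f$ yields ${\rm GNP}(n,d;p)(k)\leq{\rm PP}(n,d;p)(k)$; combined with the reverse inequality ${\rm GNP}(n,d;p)\geq{\rm PP}(n,d;p)$ established earlier in this section, equality holds at every Frobenius vertex $k\leq{d+n-2\choose n}$.

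To upgrade this vertex-wise equality to coincidence of polygons on the whole of $[0,{d+n-2\choose n}]$, I invoke the Section~4 corollary, valid since $p>2d$, which identifies ${\rm PP}(n,d;p)$ with ${\rm FP}(n,d;p)$; by construction, ${\rm FP}(n,d;p)$ is piecewise linear with break points exactly the Frobenius vertices. On each segment between consecutive Frobenius vertices in $[0,{d+n-2\choose n}]$, the polygon ${\rm GNP}(n,d;p)$ agrees with the linear function ${\rm PP}(n,d;p)$ at both endpoints while lying above it throughout, which together with concavity of ${\rm GNP}(n,d;p)$ forces equality on the whole segment. Piecing the segments together gives the claim.

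The single delicate point is that a different witness $f$ may be used for different Frobenius vertices $k$: this is legitimate because ${\rm GNP}(n,d;p)$ is defined as an infimum over all admissible $f$, so bounding it from above pointwise at each $k$ suffices. The rest is an immediate assembly of results already proved in Sections~4 and~6.
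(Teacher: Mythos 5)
Your argument is correct and is exactly the deduction the paper leaves implicit ("from the above theorem we can infer..."): the congruence plus the nonvanishing hypothesis pins ${\rm ord}_q(\nu_k(f))={\rm Prem}(k)$ at each Frobenius vertex, giving ${\rm GNP}\leq{\rm PP}$ there, the reverse inequality comes from the preceding theorem, and equality propagates across each linear segment of ${\rm PP}={\rm FP}$ because the break points are Frobenius vertices and ${\rm GNP}$ lies on or below the chord joining two of its own points. Only note that the chord property you invoke is convexity in the usual sense (the paper's polygons have non-decreasing slopes despite being called ``concave''), and that a different witness $f$ at each vertex is indeed harmless since ${\rm GNP}$ is an infimum.
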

\begin{corollary}If $n=2$, $p>2d$, and for each Frobenius vertex $k\leq{d\choose 2}$, there exist a positive integer $a$ and a polynomial
$$f(x)\in{\mathbb F}_{p^a}[x_1,\cdots,x_n]_{(d)}$$
whose leading form is smooth such that
$${\rm TH}^{(a)}(k)(f)\neq0,$$
 then
$${\rm GNP}(n,d;p)={\rm PP}(n,d;p).$$\end{corollary}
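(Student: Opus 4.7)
The plan is to deduce this corollary from the immediately preceding one by a reflection-symmetry argument. Specializing to $n=2$, the quantity ${d+n-2\choose n}$ becomes ${d\choose 2}$, so the preceding corollary applies verbatim under the given hypothesis and yields ${\rm GNP}(n,d;p)={\rm PP}(n,d;p)$ on the initial segment $\bigl[0,{d\choose 2}\bigr]$. The remaining task is therefore to extend this equality to the full interval $[0,(d-1)^2]$.

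The extension will be carried out by exploiting that both polygons are symmetric about their midpoint. Concretely, I would verify that any polygon $P\in\{{\rm GNP}(n,d;p),{\rm PP}(n,d;p)\}$ satisfies the identity
\[
P\bigl((d-1)^n-k\bigr)=(d-1)^n\cdot\tfrac{n}{2}-nk+P(k),\qquad 0\leq k\leq(d-1)^n.
\]
For ${\rm PP}(n,d;p)$ this is immediate from the coincidence ${\rm PP}(n,d;p)={\rm FP}(n,d;p)$ (valid since $p>2d$) together with the already-established symmetry of ${\rm FP}(n,d;p)$. For ${\rm GNP}(n,d;p)$ it follows because Deligne's theorem makes each ${\rm NP}(f)$ with smooth leading form symmetric in the stated sense, and since the affine map $y\mapsto (d-1)^n\cdot\tfrac{n}{2}-nk+y$ is strictly increasing, the identity is preserved under $\inf_f$ in the definition of ${\rm GNP}(n,d;p)$.

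Next comes the gluing step. Put $N=(d-1)^2$ and $M={d\choose 2}$. A direct calculation gives $2M-N=d(d-1)-(d-1)^2=d-1\geq 0$, so the intervals $[0,M]$ and $[N-M,N]$ together cover $[0,N]$. For any $k\in[N-M,N]$ one has $N-k\in[0,M]$, hence the already-known equality ${\rm GNP}(N-k)={\rm PP}(N-k)$, combined with the reflection identity applied separately to each polygon, forces ${\rm GNP}(k)={\rm PP}(k)$. Thus ${\rm GNP}(n,d;p)={\rm PP}(n,d;p)$ on all of $[0,(d-1)^2]$, which is the desired conclusion.

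The main obstacle I anticipate is merely bookkeeping: checking that the infimum of a family of symmetric concave polygons with common endpoints is itself symmetric in the same sense. This is a one-line consequence of the monotonicity of the reflection map, so the corollary requires no genuinely new input beyond the preceding corollary and Deligne's symmetry.
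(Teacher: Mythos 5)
Your proposal is correct and follows essentially the same route as the paper: specialize the preceding corollary (noting ${d+n-2\choose n}={d\choose 2}$ when $n=2$) and then extend to all of $[0,(d-1)^2]$ using the symmetry of the generic Newton polygon (from Deligne) together with the symmetry of ${\rm PP}(n,d;p)$ inherited from ${\rm FP}(n,d;p)$ via their coincidence for $p>2d$. Your extra bookkeeping (the reflection identity, the covering $[0,M]\cup[N-M,N]=[0,N]$, and passing symmetry through the infimum) just makes explicit what the paper leaves implicit.
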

\begin{proof}By the last corollary, the present one follows from the symmetry of
the generic Newton polygon, and the symmetry of ${\rm PP}(n,d;p)$ which  comes from the symmetry of ${\rm FP}(n,d;p)$.
\end{proof}
\section{Polynomials in the open Stratum}
In this section we assume that $n=2$, and $d$ is even.
We prove that, if $p\equiv-1 ({\rm mod}\ d)$, and $p>d^3$, then for each Frobenius vertex $k\leq{d\choose 2}$, and for each $a>1$, there exists a polynomial
$$f(x)\in{\mathbb F}_{p^a}[x_1,x_2]_{(d)}$$
whose leading form is smooth such that
$${\rm TH}^{(a)}(k)(f)\neq0.$$
Theorem \ref{main-theorem-exact-bound} then follows.
\subsection{Specialization}
\paragraph{}
In this subsection, we define, for each Frobenius vertex $k$, a specialization ${\rm TH}_0^{(a)}(k)$ of
${\rm TH}^{(a)}(k)$ such that,  if $f(x)\in{\mathbb F}_{p^a}[x_1,x_2]$
has leading form
$$a_{d,0}x_1^d+a_{\frac{d}{2},\frac{d}{2}}x_1^{\frac{d}{2}}x_2^{\frac{d}{2}}+a_{0,d}x_2^d,$$
then
$${\rm TH}^{(a)}(k)(f)={\rm TH}_0^{(a)}(k)(f),\ a=1,2,\cdots.$$
\begin{definition}For $u\in{\mathbb N}^n$, we define
$${\rm Bund}_0(n,d)(u):=\{s\in{\rm Bund}(n,d)\mid s(w)=0,\ \forall w\in{\rm SV}\},$$
where
$${\rm SV}:=\{w\in{\rm Simp}(n,d)\cap{\mathbb Z}^n\mid \deg w<1, w=(0,d),(d,0), (\frac{d}{2},\frac{d}{2})\}$$\end{definition}
\begin{definition}For $u\in{\mathbb N}^n$, we define
$${\rm Poly}_0(u):=\sum_{s\in{\rm Bund}_0(n,d)(u)}{\rm Mono}(s).$$\end{definition}
\begin{definition}Assume that $n=2$ and $d$ is even. For an $a$-tuple of subsets of fundamental lattice points,
we define the set of specialization survivors of $A$ to be the set
  $${\rm Sym}_1^{(1)}(A):=\{\tau\in{\rm Sym}_0^{(1)}(A)\mid\prod_{l=1}^a\prod_{u\in A_l}{\rm Poly}_0(pu-\tau_l(u))\neq0\}.$$
\end{definition}
It is easy to see that, if $A$ is a subset of fundamental lattice points,
then
  $${\rm Sym}_1^{(1)}(A^a)=\left({\rm Sym}_1^{(1)}(A)\right)^a.$$
\begin{definition}For an $a$-tuple $A$
of subsets of fundamental lattice points, we define
$${\rm Poly}_0(A):=\sum_{\tau\in{\rm Sym}_1^{(a)}(A)}(-1)^{{\rm sgn}(\tau)}\prod_{l=1}^a\prod_{u\in A_l}{\rm Poly}_0(pu-\tau_l(u))^{p^{l-1}}.$$
\end{definition}
It is easy to see that, if $A$ is a subset of fundamental lattice points,
then
$${\rm Poly}_0(A^a)=\prod_{l=1}^a{\rm Poly}_0(A)^{p^{l-1}}.$$
\begin{definition}For Frobenius vertex $k$, we define
$${\rm TH}_0^{(a)}(k):=
\sum_{A\in{\rm SF}_0(k)^a}{\rm Poly}_0(A).$$
\end{definition}
It is easy to see that, if $k=\sum_{j=0}^ih_j$ is a Hodge vertex,
then
$${\rm TH}_0^{(a)}(k)=\prod_{l=1}^a{\rm Poly}_0(\cup_{j=0}^iW_j)^{p^{l-1}}.$$
\begin{lemma}Assume that $n=2$ and $d$ is even. Let $k$ be a Frobenius vertex. If $f(x)\in{\mathbb F}_{p^a}[x_1,x_2]$
has leading form
$$a_{d,0}x_1^d+a_{\frac{d}{2},\frac{d}{2}}x_1^{\frac{d}{2}}x_2^{\frac{d}{2}}+a_{0,d}x_2^d,$$
then
$${\rm TH}^{(a)}(k)(f)={\rm TH}_0^{(a)}(k)(f),\ a=1,2,\cdots.$$
\end{lemma}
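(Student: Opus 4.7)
The plan is to reduce this evaluation identity to a polynomial identity obtained from the partial specialization $\sigma$ that sends $a_w\mapsto 0$ for every $w\in{\rm SV}$. The hypothesis on the leading form of $f$ amounts to saying that $a_w(f)=0$ for all $w\in{\rm SV}$, so any monomial in $\{a_w\}$ that involves an $a_w$ with $w\in{\rm SV}$ evaluates to $0$ at $f$. Consequently it suffices to establish the formal polynomial identity
$$\sigma\bigl({\rm TH}^{(a)}(k)\bigr) = {\rm TH}_0^{(a)}(k),$$
after which we may simply evaluate both sides at $f$.

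First I would verify the atomic case $\sigma({\rm Poly}(u))={\rm Poly}_0(u)$ for every $u\in{\mathbb N}^n$. By definition ${\rm Poly}(u)=\sum_{s\in{\rm Bund}(n,d)(u)}{\rm Mono}(s)$, and ${\rm Mono}(s)=\prod_w\bar\lambda_{s(w)}a_w^{s(w)}$ carries a positive power of $a_w$ exactly when $s(w)>0$. Applying $\sigma$ therefore kills exactly the monomials coming from sections $s$ with $s(w)>0$ for some $w\in{\rm SV}$, and the surviving sum runs over sections with $s(w)=0$ for all $w\in{\rm SV}$, i.e.\ over ${\rm Bund}_0(n,d)(u)$, giving ${\rm Poly}_0(u)$.

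Next I would propagate this through the chained definitions. Applying $\sigma$ termwise to
$${\rm Poly}(A)=\sum_{\tau\in{\rm Sym}_0^{(1)}(A)}(-1)^{{\rm sgn}\,\tau}\prod_{l=1}^a\prod_{u\in A_l}{\rm Poly}(pu-\tau_l(u))^{p^{l-1}}$$
replaces each ${\rm Poly}$ factor by ${\rm Poly}_0$. By the very definition of ${\rm Sym}_1^{(1)}(A)$, any $\tau\in{\rm Sym}_0^{(1)}(A)\setminus{\rm Sym}_1^{(1)}(A)$ contributes the formal zero polynomial, so the surviving $\tau$'s are precisely those in ${\rm Sym}_1^{(1)}(A)$, yielding ${\rm Poly}_0(A)$. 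Summing over $A\in{\rm SF}_0(k)^a$ then gives $\sigma({\rm TH}^{(a)}(k))={\rm TH}_0^{(a)}(k)$, and evaluating at $f$ finishes the proof.

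The argument is essentially bookkeeping across the layered definitions, and I do not expect a substantive obstacle; in particular no feature of the smoothness of the leading form, of $p\equiv -1\pmod d$, or of the Frobenius-vertex structure of $k$ is used beyond the bare vanishing $a_w(f)=0$ for $w\in{\rm SV}$. The one point requiring care is that the vanishing of the terms indexed by $\tau\notin{\rm Sym}_1^{(1)}(A)$ must be interpreted as vanishing of formal polynomials in $\{a_w\}$, which is precisely the sense in which ${\rm Sym}_1^{(1)}(A)$ was defined.
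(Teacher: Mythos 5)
Your argument is correct and is exactly the bookkeeping the paper has in mind: the paper's own proof is just ``Obvious,'' and your write-up makes that explicit by noting that the evaluation at $f$ kills every monomial containing a variable $a_w$ with $w\in{\rm SV}$ (i.e.\ a degree-one lattice point other than $(d,0)$, $(\frac{d}{2},\frac{d}{2})$, $(0,d)$), so that ${\rm TH}^{(a)}(k)$ and its specialization ${\rm TH}_0^{(a)}(k)$ agree at $f$, with the discarded $\tau\notin{\rm Sym}_1^{(1)}(A)$ terms vanishing identically by the very definition of ${\rm Sym}_1^{(1)}(A)$. No substantive difference from the paper's (implicit) reasoning.
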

\begin{proof}Obvious.\end{proof}
\subsection{Polarization}
\paragraph{}
In this subsection, we view the polynomials ${\rm TH}_0^{(a)}(k)$'s as polynomial over
$${\mathbb F}_p[a_{d,0},a_{\frac{d}{2},\frac{d}{2}},a_{0,d}],$$
and locate  their minimal forms.

\begin{definition}For each $j=0,1,\cdots,d$,
we define
a multiplicity function $m_j$ on ${\mathbb N}^n$ by the formulae
$$m_j(u):=\left\{
            \begin{array}{ll}
              -1, & \hbox{ if }\{\deg(u)\}=\frac{j}{d},\\
              0, & \hbox{ if }\{\deg(u)\}\neq\frac{j}{d},
            \end{array}
          \right.
\ j=1,\cdots, d-1,$$
$$m_d(u):=-\sum_{j=1}^{d-1}m_j(u),$$
and
$$m_0(u):=\left\{
            \begin{array}{ll}
              -1, & \hbox{ if } u_1=u_2,\\
              0, & \hbox{ if }u_1\neq u_2.
            \end{array}
          \right.
$$
\end{definition}
\begin{definition}Let
$A$ be an $a$-tuple of subsets of fundamental lattice points.  For each $j=0,1,\cdots,d$,
we define
a multiplicity function $m_j$ on ${\rm Sym}^{(1)}(A)$ by the formula
$$m_j(\tau):=\frac{p-1}{p^a-1}\sum_{l=1}^ap^{l-1}\sum_{u\in A_l}m_j(pu-\tau_l(u)),\ j=0,\cdots, d.$$
\end{definition}
\begin{definition}Let $(a_j)_{j=0}^d$ and $(b_j)_{j=0}^d$ be $d$-dimensional vectors. We define
$(a_j)_{j=0}^d<(b_j)_{j=0}^d$ if there is some $i$ such that
$$(a_i<b_i)\wedge(\forall j>i: a_j=b_j).$$
\end{definition}
\begin{definition}We define a system of multiplicity functions $\{m_j\}_{j=0}^d$ on the set of $a$-tuples
of subsets of fundamental lattice points by the formula
$$(m_j(A))_{j=0}^d:=\min_{\tau\in{\rm Sym}_1^{(1)}(A)}(m_j(\tau))_{j=0}^d.$$
\end{definition}
It is easy to see that, if
$A$ is a subset of fundamental lattice points,  then
$$m_j(A^a)=m_j(A),\ j=0,1,\cdots, d.$$
\begin{definition}For an $a$-tuple
$A$ of subsets of fundamental lattice points,  we define ${\rm Sym}_2^{(1)}(A)$ to be the subset of
${\rm Sym}_1^{(1)}(A)$ consisting of elements of minimal multiplicities, i.e.
$${\rm Sym}_2^{(1)}(A):=\{\tau\in{\rm Sym}_1^{(1)}(A)\mid m_j(\tau)=m_j(A),\ \forall j=0,1,\cdots,d\}.$$
\end{definition}
It is easy to see that, if
$A$ is a subset of fundamental lattice points,  then
$${\rm Sym}_2^{(1)}(A^a)=\left({\rm Sym}_2^{(1)}(A)\right)^a.$$
\begin{definition}For a Frobenius vertex $k$,
the multiplicities $m_j^{(a)}(k)$'s are defined by the formula
$$\left(m_j^{(a)}(k)\right)_{j=0}^d:=\min_{A\in{\rm SF}_0(k)^a}(\theta_j(A))_{j=0}^d.$$
\end{definition}
\begin{definition}For each Frobenius vertex $k$, we define
$${\rm SF}_1^{(a)}(k):=\{A\in{\rm SF}_0(k)^a\mid m_j(A)=m_j(k),\ \forall j=0,1,\cdots,d\}.$$
\end{definition}
It is easy to see that, if $k=\sum_{j=0}^ih_j$ is a Hodge vertex,  then
$${\rm SF}_1^{(a)}(k)=\{\left(\cup_{j=0}^iW_j\right)^a\}.$$
\begin{definition}For $u\in{\mathbb N}^n$, we define
$${\rm Bund}_1(n,d)(u):=\{s\in{\rm Bund}_0(n,d)\mid \deg_d(s)=\lceil\{\deg(u)\}\rceil\},$$
where
$$\deg_d(s)=\sum_{\deg w<1}s(w).$$\end{definition}
\begin{definition}For $u\in{\mathbb N}^n$, we define
$${\rm Poly}_1(u):=\sum_{s\in{\rm Bund}_1(n,d)(u)}{\rm Mono}(s).$$\end{definition}
\begin{definition}For an $a$-tuple
$A$ of subsets of fundamental lattice points,  we define
$${\rm Poly}_1(A):=\sum_{\tau\in{\rm Sym}_2^{(1)}(A)}(-1)^{{\rm sgn} \tau}\prod_{l=1}^a\prod_{u\in A_l}{\rm Poly}_1(pu-\tau_l(u))^{p^{l-1}}.$$
\end{definition}
It is easy to see that, if $A$ is a subset of fundamental lattice points,  then
$${\rm Poly}_1(A^a)=\prod_{l=1}^a{\rm Poly}_1(A)^{p^{l-1}}.$$
\begin{definition}For each Frobenius vertex $k$, we define
$${\rm TH}_1^{(a)}(k):=
\sum_{A\in{\rm SF}_1^{(a)}(k)}{\rm Poly}_1(A).$$
\end{definition}
It is easy to see that, if $k=\sum_{j=0}^ih_j$ is a Hodge vertex,  then
$${\rm TH}_1^{(a)}(k)=\prod_{l=1}^a{\rm Poly}_1(\cup_{j=0}^iW_j)^{p^{l-1}}.$$
\begin{lemma}There exists an partial order on the set
of monomials in
$${\mathbb F}_p[a_w\mid \deg w<1]$$
such that, for each Frobenius vertex $k$, and  for each positive integer $a$,
${\rm TH}_1^{(a)}(k)$ is the minimal form of
${\rm TH}_0^{(a)}(k)$.
\end{lemma}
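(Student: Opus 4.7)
The plan is to construct the partial order explicitly from the multiplicity functions $m_j$ already introduced, and then to verify, step by step, that the three specializations leading from ${\rm TH}_0^{(a)}(k)$ to ${\rm TH}_1^{(a)}(k)$ each correspond to extracting the $\prec$-minimum part. Specifically, for a monomial $M = \prod_{\deg w < 1} a_w^{t(w)}$ in the low-degree coefficients, I would set $m_j(M) := \sum_{\deg w < 1} t(w)\, m_j(w)$ for $j = 0, 1, \ldots, d$, and declare $M_1 \prec M_2$ whenever $(m_j(M_1))_{j=0}^d < (m_j(M_2))_{j=0}^d$ in the lex-type order defined in the previous subsection (compare at the largest index where the vectors differ). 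This is manifestly a partial order on monomials.

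Next I would unravel ${\rm TH}_0^{(a)}(k)$. Each term arises from a triple consisting of an $a$-tuple $A \in {\rm SF}_0(k)^a$, a twisted permutation $\tau \in {\rm Sym}_1^{(1)}(A)$, and, for every $l$ and every $u \in A_l$, a section $s_{l,u} \in {\rm Bund}_0(n,d)(pu-\tau_l(u))$; the contribution in the low-degree variables is the low-degree part of $\prod_{l,u} {\rm Mono}(s_{l,u})^{p^{l-1}}$. By the additivity of the $m_j$'s on sections, the multiplicity vector of this contribution equals precisely the weighted sum used in the definition of $m_j(\tau)$. Thus, for fixed $A$ and $\tau$, the $\prec$-minimum over sections is realized by choosing $s_{l,u} \in {\rm Bund}_1(n,d)(pu-\tau_l(u))$, which replaces each ${\rm Poly}_0$-factor by ${\rm Poly}_1$; the $\prec$-minimum over $\tau$ then singles out ${\rm Sym}_2^{(1)}(A)$; and the $\prec$-minimum over $A$ singles out ${\rm SF}_1^{(a)}(k)$. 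Assembling these minima reproduces exactly the defining formula of ${\rm TH}_1^{(a)}(k)$.

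The main obstacle will be showing that the $\prec$-minimum in each step really is attained by the indicated subset and is not destroyed by cancellation. For the passage from ${\rm Bund}_0$ to ${\rm Bund}_1$, a pigeonhole argument is required: any section $s$ representing $u$ with $\deg_d(s) > \lceil\{\deg u\}\rceil$ spends strictly more low-degree coefficients than necessary, and tracking the resulting contribution to $m_j$ through the definition of $m_j(u)$ should show that its associated monomial is strictly $\succ$ any monomial coming from ${\rm Bund}_1(n,d)(u)$. Noncancellation is the more delicate issue: distinct triples $(A, \tau, \{s_{l,u}\})$ realizing the minimum must yield independent terms in the minimal form, so I would need to verify that the signs $(-1)^{{\rm sgn}\,\tau}$, together with the parameterization by ${\rm SF}_1^{(a)}(k) \times {\rm Sym}_2^{(1)}(A)$, do not collapse, using the structural description of ${\rm Sym}_0^{(1)}(A)$ from the corollary in \S4 and the $n=2$, $p \equiv -1 \pmod d$ hypotheses to rule out coincidences in which different twisted permutations compose with Frobenius to the same lattice move.
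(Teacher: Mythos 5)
Your plan is a reasonable reconstruction (the paper's own proof is just ``Obvious''), and part of it does work: ordering monomials of ${\mathbb F}_p[a_w\mid\deg w<1]$ by the vector of counts of variables of each degree, compared lexicographically from the top index down, is additive under multiplication of monomials, and any section $s\in{\rm Bund}_0(n,d)(v)\setminus{\rm Bund}_1(n,d)(v)$ must use at least one variable $a_w$ with $\deg w>\{\deg v\}$ (since $\sum_{\deg w<1}s(w)(1-\deg w)=1-\{\deg v\}$ is forced by minimality of $\deg(s)$), so the passage from ${\rm Bund}_0$ to ${\rm Bund}_1$ is indeed the extraction of the $\prec$-minimal part. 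But the pivotal assertion that ``the multiplicity vector of this contribution equals precisely the weighted sum used in the definition of $m_j(\tau)$'' is false, and this is where the argument breaks. First, for a general ${\rm Bund}_0$-section the degree profile of the variables used does not reproduce $(m_j(pu-\tau_l(u)))_{j\geq1}$ at all; that identification only holds after you have already restricted to ${\rm Bund}_1$-sections. More seriously, even on ${\rm Bund}_1$-sections the $j=0$ component fails: $m_0$ is defined on the lattice point $pu-\tau_l(u)$ (it records whether that point is diagonal), whereas your $m_0(M)$ records whether the variable used is diagonal, and these disagree. The paper's own identity ${\rm Poly}_1(pu-(u_2,u_1))={\rm Poly}_1(\tfrac{d-2j}{2}(1,1))\,{\rm Poly}_1((p+1)u-(\tfrac d2,\tfrac d2))$ exhibits this: for $u\in W_j$ with $u_1\neq u_2$ the lattice point $pu-(u_2,u_1)$ is not diagonal, so it contributes $0$ to $m_0(\tau)$, yet the unique low--degree variable occurring is the diagonal one $a_{(\frac{d-2j}2,\frac{d-2j}2)}$.

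Since ${\rm Sym}_2^{(1)}(A)$ and ${\rm SF}_1^{(a)}(k)$ are defined through the lattice-point multiplicities \emph{including} $m_0$, your order is not shown to single them out: the $\prec$-minimum over $\tau$ under your monomial-level order need not coincide with the set of $\tau$ minimizing $(m_j(\tau))_{j=0}^d$, because the $m_0$-component of the paper's vector cannot be read off from the monomial alone (two $\tau$'s with identical degree profiles of the points $pu-\tau_l(u)$, hence identical $m_j$ for $j\geq1$ and identical $m_0$, can force low-degree variables of different diagonality). So the steps ``the $\prec$-minimum over $\tau$ singles out ${\rm Sym}_2^{(1)}(A)$'' and ``the $\prec$-minimum over $A$ singles out ${\rm SF}_1^{(a)}(k)$'' are exactly the unproved content of the lemma, and your proposed order does not deliver them without a further compatibility argument linking the diagonality of the forced variable in a ${\rm Bund}_1$-representation of $pu-\tau_l(u)$ to the data entering $m_0(\tau)$ (this requires the structural analysis of the minimal $\tau$'s carried out in the subsequent subsection, under the hypotheses $n=2$, $d$ even, $p\equiv-1\ ({\rm mod}\ d)$). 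Your closing remarks on noncancellation are the right kind of worry, but they are secondary to this mismatch: before cancellation can be discussed you must first know that the monomials produced by the data $({\rm SF}_1^{(a)}(k),{\rm Sym}_2^{(1)},{\rm Bund}_1)$ are precisely the $\prec$-minimal monomials in the support of ${\rm TH}_0^{(a)}(k)$, and that is what the $m_0$ discrepancy leaves open.
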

\begin{proof}Obvious.\end{proof}
\subsection{Minimal Forms with Quadratic Frobenius}
\paragraph{}
In this subsection we assume that $p\equiv-1({\rm mod}\ d)$.
We locate the subsets of fundamental lattice points with minimal multiplicities, and
determine their twisted permutations of minimal multiplicities.
\begin{lemma}Assume that $p\equiv-1({\rm mod}\ d)$. Then
$$h_{i,1}=\left\{
            \begin{array}{ll}
              0, & \hbox{ if } i\leq\frac{d}{2},\\
              h_{d-i}, & \hbox{ if }\frac{d}{2}<i\leq d.
            \end{array}
          \right.
$$
\end{lemma}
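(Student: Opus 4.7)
The plan is to reduce the lemma to an explicit computation of $\sigma_i(0)$ under the hypothesis $p\equiv -1\pmod d$, and then substitute into the definition
$$h_{i,1}=H_{i-\sigma_i(0)}-H_{i-d}.$$
Since $p\equiv -1\pmod d$ gives $p^{-1}\equiv -1\pmod d$, the defining congruence $i-\sigma_i(l)\equiv p^{-1}(i-l)\pmod d$ simplifies to $\sigma_i(l)\equiv 2i-l\pmod d$, so in particular $\sigma_i(0)\equiv 2i\pmod d$. All that remains is to choose the right representative according to the two conventions in the definition of $\sigma_i$.

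First I would identify the degenerate values. Since $p-1\equiv -2\pmod d$ and $d$ is even, $(p-1)i\equiv 0\pmod d$ occurs for $0\leq i\leq d$ exactly when $i\in\{0,d/2,d\}$; in these cases $\sigma_i(0)=d$, which forces $h_{i,1}=H_{i-d}-H_{i-d}=0$, handling in particular the boundary points $i=d/2$ (covered by the first case of the lemma) and $i=d$ (where $h_{d-i}=h_0=0$, matching the second case). For the remaining values, $\sigma_i(0)$ is chosen in $\{0,1,\dots,d-1\}$, so $\sigma_i(0)=2i$ when $0<i<d/2$ and $\sigma_i(0)=2i-d$ when $d/2<i<d$.

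Next I would plug these into the defining formula. If $0<i<d/2$, then $i-\sigma_i(0)=-i<0$ and $i-d<0$, so both arithmetic Hodge sums vanish by the convention $H_m=0$ for $m<0$, giving $h_{i,1}=0$. If $d/2<i<d$, then $i-\sigma_i(0)=d-i\in(0,d/2)$, so $[\,(d-i)/d\,]=0$ and only the $j=0$ term of $H_{d-i}$ survives, yielding $H_{d-i}=h_{d-i}$, while $H_{i-d}=0$; hence $h_{i,1}=h_{d-i}$, as claimed. No step is more than bookkeeping: the entire lemma is a direct case analysis once the key reduction $\sigma_i(0)\equiv 2i\pmod d$ is extracted from the congruence $p\equiv -1\pmod d$, so no real obstacle is anticipated.
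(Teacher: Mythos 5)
Your proof is correct and is the natural unwinding of the definitions that the paper dismisses with the single word ``Obvious'': under $p\equiv -1\pmod d$ one has $\sigma_i(0)\equiv 2i\pmod d$, and the rest is a case check on the representative and on the vanishing of $H_m$ for $m<0$. Since the paper provides no written argument, there is nothing to contrast your route against; your computation is the intended one.
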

\begin{proof}Obvious. \end{proof}
\begin{definition}Assume that $n=2$, $d$ is even, and $p\equiv-1({\rm mod}\ d)$. For $d-j\leq j\leq d$,
we define
$$W_{j,1}:=\{u\in(0,d)^n\cap{\mathbb Z}^n\mid \deg(u)=\frac{j}{d}, u_1<\frac{d}{2}, u_2<\frac{d}{2}\},$$
and $$W_{j,0}:=W_j\setminus W_{j,1}.$$\end{definition}
\begin{lemma}If $d-i<i\leq d$, then
$${\rm Sym}_2^{(1)}(W_{i,1}\cup\cup_{j=0}^{i-1}W_j)=\{\tau_0\},$$
where $\tau_0$ is defined by the formula
$$\tau_0(u)=\left\{
              \begin{array}{ll}
                (\frac{d}{2},\frac{d}{2})-u, & \hbox{ if }  u\in W_j,\  j\leq d-j\leq i,\\
                (\frac{d}{2},\frac{d}{2})-u, & \hbox{ if } u\in W_{j,1}, d-j\leq j\leq i,\\
                (u_2,u_1), & \hbox{ if } u\in W_{j,0},  d-j\leq j<i,\\
                (u_2,u_1), & \hbox{ if }u\in W_j, j\leq i<d-j
              \end{array}
            \right.
$$
\end{lemma}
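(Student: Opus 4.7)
The plan is to characterize ${\rm Sym}_2^{(1)}(A)$ for $A = W_{i,1} \cup \bigcup_{j=0}^{i-1} W_j$ by peeling off the three successive filters ${\rm Sym}_0^{(1)}(A) \supseteq {\rm Sym}_1^{(1)}(A) \supseteq {\rm Sym}_2^{(1)}(A)$, showing that the prescribed $\tau_0$ is forced at each stage.

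First, I invoke the corollary at the end of Section 4. Since $A \in {\rm SF}_0(k)$ for the Frobenius vertex $k = \sum_{j=0}^{i-1} h_j + h_{i,1}$, a permutation $\tau$ lies in ${\rm Sym}_0^{(1)}(A)$ iff $\{\deg \tau(u) + \frac{d-1-i}{d}\} \geq \{p \deg u + \frac{d-1-i}{d}\}$ for every $u \in A$. Substituting $p \equiv -1 \pmod{d}$ and requiring $\tau(u) \in A$ (so $\deg \tau(u) \leq i/d$), a regime-by-regime analysis in $j = d \deg u$ reduces the constraint to: $\deg \tau(u)$ is free in $\{1/d, \ldots, i/d\}$ when $j < d-i$, and restricted to $\{(d-j)/d, \ldots, i/d\}$ when $j \geq d-i$.

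Second, I cut down to ${\rm Sym}_1^{(1)}(A)$ by requiring ${\rm Poly}_0(pu - \tau(u)) \neq 0$ for every $u$, i.e.\ that $pu - \tau(u)$ admit a minimal nonnegative integer representation over ${\rm Simp}(2,d) \cap \mathbb{Z}^2$ avoiding the three special corners in ${\rm SV}$. Among the premium-valid images from step one, only two candidates survive the specialization: the transposition $\tau(u) = (u_2, u_1)$, and the central reflection $\tau(u) = (\frac{d}{2}, \frac{d}{2}) - u$, the latter admissible only when both coordinates of $u$ lie strictly below $d/2$ (so that the reflected point is a positive lattice point of degree $(d-j)/d \leq i/d$, hence inside $A$). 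The key coordinate check uses $p + 1 \equiv 0 \pmod{d}$ and the evenness of $d$ to rule out intermediate degree choices.

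Third, I lexicographically minimize $(m_j(\tau))_{j=0}^d$ in the prescribed ordering. Since $\{\deg(pu - \tau(u))\} \equiv -(\deg u + \deg \tau(u)) \pmod{1}$, the central reflection always gives integer degree and so contributes $0$ to every $m_j$ with $j \geq 1$, whereas the transposition on $u \in W_{j'}$ with $j' \neq d/2$ gives $\{-2j'/d\} \neq 0$, contributing $-1$ to some $m_{j''}$ with $1 \leq j'' \leq d-1$ and hence $+1$ to $m_d$. Consequently the top-priority statistic $m_d$ strictly prefers the central reflection wherever admissible, forcing exactly the partition encoded in $\tau_0$: reflection on $W_j$ for $d-i \leq j \leq d/2$ and on $W_{j,1}$ for $d/2 \leq j \leq i$, transposition on $W_j$ for $j < d-i$ and on $W_{j,0}$ for $d/2 \leq j < i$. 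Reassembling the forced choices recovers $\tau_0$ uniquely.

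The main obstacle is step two: showing that no intermediate image $\tau(u) \in W_k$ with $\max(d-j,1) \leq k \leq i$ and $k \notin \{j, d-j\}$ can survive the specialization ${\rm Poly}_0$. This amounts to analyzing, for each such $k$, whether $pu - \tau(u)$ admits a minimal expansion over lattice points in ${\rm Simp}(2,d) \cap \mathbb{Z}^2 \setminus {\rm SV}$, and excluding every candidate but the two canonical ones. The joint hypotheses $p \equiv -1 \pmod{d}$ and $d$ even make this exhaustive exclusion possible by forcing tight congruence conditions on the coordinates of $pu - \tau(u)$. The argument is purely combinatorial, resting on the explicit coordinate descriptions of the $W_j$, $W_{j,0}$ and $W_{j,1}$ and on the geometry of ${\rm Simp}(2,d) \cap \mathbb{Z}^2$ relative to ${\rm SV}$.
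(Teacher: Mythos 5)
Your step 2 is where the argument breaks. Membership in ${\rm Sym}_1^{(1)}$ only requires ${\rm Poly}_0(pu-\tau(u))\neq 0$ for each $u$, and this does \emph{not} cut the admissible images of $u$ down to the transposition and the central reflection. Concretely, take $d=6$, $i=4$ (so $A=W_{4,1}\cup W_2\cup W_3$), $u=(1,2)$ and the candidate image $w=(2,2)\in W_{4,1}$: it satisfies your step-1 premium constraint ($\deg w=\tfrac{2}{3}\geq\tfrac{d-j}{d}=\tfrac12$), and writing $p=6m-1$ one has $pu-w=(6m-3,\,12m-4)=(0,5)+(m-1)(0,6)+(2m-1)(3,3)$, a representation by $\lceil\deg(pu-w)\rceil=3m-1$ lattice points none of which lies in ${\rm SV}$, so ${\rm Poly}_0(pu-w)\neq0$. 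Yet $w=(2,2)$ is neither $(u_2,u_1)$ nor $(\tfrac d2,\tfrac d2)-u$ (both equal $(2,1)$ here). The general phenomenon is that whenever $\deg(pu-w)$ is not an integer, a minimal section may use one point of degree $<1$, and such points realize essentially all residues, so the specialization excludes very little pointwise; the congruence argument modulo ${\mathbb Z}(d,0)+{\mathbb Z}(0,d)$ that you appeal to only has teeth once $\deg(pu-\tau(u))$ is already known to be integral, which is precisely what the paper establishes \emph{first}, via the multiplicity minimization, before invoking nonemptiness of ${\rm Bund}_1$.

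For the same reason your step 3 cannot be a pointwise greedy choice. The minimization of $(m_j(\tau))_{j=0}^d$ ranges over genuine (twisted) permutations of $A$, so whether an integral-degree (``reflection-type'') image is available for a given $u$ is a global matching question: for $d-j<j\leq i$ one has $|W_j|>|W_{d-j}|$, so not all of $W_j$ can map into $W_{d-j}$, and the paper replaces your greedy step by counting inequalities for $m_d$ (giving $\tau(W_j)\subseteq W_{d-j}$ for $j\leq d-j\leq i$, $\tau(W_j)\supseteq W_{d-j}$ for $d-j<j\leq i$, and $\tau(W_{i,1})=W_{d-i}$), then a downward induction on $l$ for the lower multiplicities, then the ${\rm Bund}_1\neq\emptyset$ congruence to identify images within a degree class, and finally the diagonal multiplicity $m_0$ to force $\tau=\tau_0$ on the points where the transposition is used. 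Your proposal never uses $m_0$ at all, and even granting the (false) two-candidate reduction it does not engage the bijectivity constraint, so neither the partition you describe nor the uniqueness of $\tau_0$ is actually established.
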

\begin{proof}It is easy to see that
$$\tau_0\in{\rm Sym}_1^{(1)}(W_{i,1}\cup\cup_{j=0}^{i-1}W_j).$$
Let
$$\tau\in{\rm Sym}_1^{(1)}(W_{i,1}\cup\cup_{j=0}^{i-1}W_j).$$
be a permutation such that
$$(m_l(\tau))_{l=0}^d\leq (m_l(\tau_0))_{l=0}^d.$$
Firstly, we claim that
$$m_d(\tau)=m_d(\tau_0),$$
$$\tau(W_j)\left\{
             \begin{array}{ll}
               \subseteq W_{d-j}, & \hbox{ if } j\leq d-j\leq i,\\
               \supseteq W_{d-j}, & \hbox{ if } d-j<j\leq i,
             \end{array}
           \right.
$$
and
$$\tau(W_{i,1})=W_{d-i}.$$
Indeed
\begin{eqnarray*}
  m_d(\tau) &=&\sum_{u\in W_{i,1}}m_d(pu-\tau(u))+ \sum_{j=0}^{i-1}\sum_{u\in W_j}m_d(pu-\tau(u)) \\
   &\geq&  \sum_{j\leq i<d-j}|W_j|+\sum_{d-j< j< i}\sum_{\stackrel{u\in W_j}{\tau(u)\not\in W_{d-j}}} 1\\ &&+\sum_{\stackrel{u\in W_{i,1}}{\tau(u)\not\in W_{d-i}}} 1+\sum_{j\leq d-j\leq i}\sum_{\stackrel{u\in W_j}{\tau(u)\not\in W_{d-j}}} 1\\
   &\geq& m_d(\tau_0)+\sum_{d-j< j< i}(\sum_{\stackrel{u\in W_j}{\tau(u)\not\in W_{d-j}}} 1-(|W_j|-|W_{d-j}|))\\
&&+\sum_{\stackrel{u\in W_{i,1}}{\tau(u)\not\in W_{d-i}}} 1+\sum_{j\leq d-j\leq i}\sum_{\stackrel{u\in W_j}{\tau(u)\not\in W_{d-j}}} 1\\
   &\geq& m_d(\tau_0)+\sum_{d-j< j< i}(|W_{d-j}|-\sum_{\stackrel{u\in W_j}{\tau(u)\in W_{d-j}}} 1)\\
&&+\sum_{\stackrel{u\in W_{i,1}}{\tau(u)\not\in W_{d-i}}} 1+\sum_{j\leq  d-j\leq i}\sum_{\stackrel{u\in W_j}{\tau(u)\not\in W_{d-j}}} 1\\
&\geq &m_d(\tau_0),
\end{eqnarray*}
where equality holds if and only if$$\tau(W_j)\left\{
             \begin{array}{ll}
               \subseteq W_{d-j}, & \hbox{ if } j\leq d-j< i,\\
               \supseteq W_{d-j}, & \hbox{ if } d-j<j\leq i,
             \end{array}
           \right.
$$
and
$$\tau(W_{i,1})=W_{d-i}.$$
Secondly, we claim that, if $l=1,2,\cdots,d-1$, then
$$m_l(\tau)= m_l(\tau_0),$$
and
$$\tau(W_j)\subseteq \left\{
                       \begin{array}{ll}
                         W_j\cup W_{d-j}, & \hbox{ if } d-j\leq j< i,\ l=2d-2j,\\
                         W_j, & \hbox{ if }j\leq i <d-j,\ l=d-2j
                       \end{array}
                     \right.
$$
In fact, as $\tau(W_j)\subseteq W_{d-j}$  when $j\leq d-j\leq i$, we have
$$\sum_{j\leq d-j\leq i}\sum_{u\in W_j}m_l(pu-\tau(u))=0.$$
Similarly,
$$\sum_{u\in W_{i,1}}m_l(pu-\tau(u))=0.$$
So
\begin{eqnarray*}
  m_l(\tau) &=& \sum_{j=0}^i\sum_{u\in W_j}m_l(pu-\tau(u)) \\
   &= & \sum_{d-j<j< i}\sum_{u\in W_j}m_l(pu-\tau(u))\\
   & & +\sum_{j\leq i< d-j}\sum_{u\in W_j}m_l(pu-\tau(u)).
\end{eqnarray*}
By induction on $l$,
$$\tau(W_j)\subseteq \left\{
                       \begin{array}{ll}
                         W_j\cup W_{d-j}, & \hbox{ if } d-j\leq j< i,\ 2d-2j>l,\\
                         W_j, & \hbox{ if }j\leq i <d-j,\ d-2j>l
                       \end{array}
                     \right.
$$
So
$$\sum_{\stackrel{d-j<j\leq i}{2d-2j>l}}\sum_{u\in W_j}m_l(pu-\tau(u))+\sum_{\stackrel{j\leq i< d-j}{d-2j>l}}\sum_{u\in W_j}m_l(pu-\tau(u))=0,$$
and
$$W_j\subseteq \left\{
                       \begin{array}{ll}
                         \tau(W_j)\cup \tau(W_{d-j}), & \hbox{ if } d-j\leq j< i,\ 2d-2j>l,\\
                         \tau(W_j), & \hbox{ if }j\leq i <d-j,\ d-2j>l.
                       \end{array}
                     \right.
$$
It follows that
\begin{eqnarray*}
  m_l(\tau)& = &\sum_{\stackrel{d-j<j\leq i}{2d-2j\leq l}}\sum_{\stackrel{u\in W_j}{\deg\tau(u)\geq \frac{2d-l}{2d}}}m_l(pu-\tau(u))\\
& &+\sum_{\stackrel{j\leq i <d-j}{d-2j\leq l}}\sum_{\stackrel{u\in W_j}{\deg\tau(u)\geq \frac{d-l}{2d}}}m_l(pu-\tau(u))\\
&= &\sum_{\stackrel{d-j<j\leq i}{2d-2j=l}}\sum_{\stackrel{u\in W_j}{\deg\tau(u)= \frac{2d-l}{2d}}}m_l(pu-\tau(u))\\
& &+\sum_{\stackrel{j\leq i <d-j}{d-2j=l}}\sum_{\stackrel{u\in W_j}{\deg\tau(u)= \frac{d-l}{2d}}}m_l(pu-\tau(u))\\
&\geq&m_l(\tau_0),
\end{eqnarray*}
where equality holds if and only if

$$\tau(W_j)\subseteq \left\{
                       \begin{array}{ll}
                         W_j\cup W_{d-j}, & \hbox{ if } d-j\leq j\leq i,\ l=2d-2j,\\
                         W_j, & \hbox{ if }j\leq i <d-j,\ l=d-2j
                       \end{array}
                     \right.
$$
Thirdly, we claim that
$$ \left\{
                       \begin{array}{ll}
                        \tau(u)=(\frac{d}{2},\frac{d}{2})-u, & \hbox{ if }  u\in W_j,\  j\leq d-j\leq i,\\
                        \tau(u)=(\frac{d}{2},\frac{d}{2})-u, & \hbox{ if }  u\in W_{j,1},\  d-j\leq j\leq i,\\
                        (d,d)-u-\tau(u)\in{\mathbb N}^2, & \hbox{ if }  u\in W_{j,0},\  d-j\leq j< i,\\
                          (\frac{d}{2},\frac{d}{2})-u-\tau(u)\in{\mathbb N}^2, & \hbox{ if }j\leq i <d-j.
                       \end{array}
                     \right.
$$
Indeed,
$${\rm Bund}_1(pu-\tau(u))\neq\emptyset,\ \forall u\in W_{i,1}\cup\cup_{j=0}^{i-1}.$$
For simplicity, we assume that $u\in W_j$ with $j\leq d- j\leq i$. The other cases can be proved similarly.
Then
$$(d,d)-u-\tau(u) \equiv0 \left({\rm mod}\ {\mathbb Z}(d,0)+{\mathbb Z}(0,d)\right),$$
or
$$(d,d)-u-\tau(u) \equiv (\frac{d}{2},\frac{d}{2})\left({\rm mod}\ {\mathbb Z}(d,0)+{\mathbb Z}(0,d)\right).$$
As the former congruence is impossible, while the latter implies an equality, we conclude that
$$\tau(u)=(\frac{d}{2},\frac{d}{2})-u.$$
\paragraph{}
Finally, as $m_0(\tau)\leq m_0(\tau_0)$, we conclude that
$\tau=\tau_0$.
The lemma is proved.
\end{proof}
Similarly, we can prove the following lemma.
\begin{lemma}If $i\leq d$, then
$${\rm Sym}_2^{(1)}(\cup_{j=0}^iW_j)=\{\tau_0\},$$
where $\tau_0$ is defined by the formula
$$\tau_0(u)=\left\{
              \begin{array}{ll}
                (\frac{d}{2},\frac{d}{2})-u, & \hbox{ if }  u\in W_j,\  j\leq d-j\leq i,\\
                (\frac{d}{2},\frac{d}{2})-u, & \hbox{ if } u\in W_{j,1}, d-j\leq j\leq i,\\
                (u_2,u_1), & \hbox{ if } u\in W_{j,0},  d-j\leq j\leq i,\\
                (u_2,u_1), & \hbox{ if }u\in W_j, j\leq i<d-j
              \end{array}
            \right.
$$
\end{lemma}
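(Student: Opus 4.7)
The plan is to mimic the proof of the preceding lemma verbatim, with the single difference that the set is now the full Hodge layer $\bigcup_{j=0}^{i} W_j$ rather than $W_{i,1}\cup\bigcup_{j=0}^{i-1}W_j$. Concretely, one shows that the displayed $\tau_0$ lies in ${\rm Sym}_1^{(1)}\bigl(\bigcup_{j=0}^iW_j\bigr)$ by a direct check: the four defining prescriptions send $u$ to an element $\tau_0(u)$ for which $pu-\tau_0(u)$ admits an integral section in ${\rm Bund}_1$ avoiding the vertex set ${\rm SV}$ (using $p\equiv -1\pmod d$ so that $\deg(pu-\tau_0(u))\in\mathbb Z$ in the first two cases, and $\deg(pu-\tau_0(u))$ has fractional part equal to the degree class that the swap case produces). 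Then I would fix an arbitrary $\tau\in{\rm Sym}_2^{(1)}\bigl(\bigcup_{j=0}^iW_j\bigr)$ and compare lexicographically the vector $(m_j(\tau))_{j=0}^d$ with $(m_j(\tau_0))_{j=0}^d$, proving equality coordinate by coordinate from $j=d$ downwards.

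The first main step is the $m_d$ coordinate. Exactly as in the proof of the previous lemma, I would bound $m_d(\tau)$ below by counting, for each $j$, the elements of $W_j$ whose image under $\tau$ lies outside $W_{d-j}$; the key combinatorial identity $|W_j|=|W_{d-j}|$ (symmetry of Hodge numbers) converts ``$\subseteq$'' failures into ``$\supseteq$'' failures and vice versa, so that equality with $m_d(\tau_0)$ forces
\[
\tau(W_j)\subseteq W_{d-j}\text{ when }j\le d-j\le i,\qquad \tau(W_j)\supseteq W_{d-j}\text{ when }d-j<j\le i,
\]
and $\tau(W_j)\subseteq W_j$ when $j\le i<d-j$. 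The only substantive change compared with the previous lemma is that the whole $W_i$ is now present; splitting $W_i=W_{i,0}\sqcup W_{i,1}$ according to whether $(u_2,u_1)\in W_{d-i}$ or not recovers exactly the two patterns in the statement.

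The second main step is a downward induction on $l=d-1,d-2,\ldots,1$. Having already fixed that $\tau$ maps each $W_j$ into $W_j\cup W_{d-j}$, the partial sums $\sum_{u\in W_j}m_l(pu-\tau(u))$ vanish except when $2d-2j=l$ (for $d-j\le j\le i$) or $d-2j=l$ (for $j\le i<d-j$); minimality of $m_l$ then pins down $\tau(W_j)$ in those layers exactly as in the previous lemma. At this stage $\tau$ is, layer by layer, a degree-preserving twisted permutation, and the ${\rm Sym}_1^{(1)}$ (specialization-survivor) condition then forces, for each $u$, the lattice-point equation
\[
(d,d)-u-\tau(u)\ \in\ \mathbb Z(d,0)+\mathbb Z(0,d)\ \text{ or }\ (\tfrac d2,\tfrac d2)+\mathbb Z(d,0)+\mathbb Z(0,d),
\]
of which only one of the two congruences is feasible in each case. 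This determines $\tau(u)=(\tfrac d2,\tfrac d2)-u$ in the first two cases.

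Finally, the $m_0$ coordinate, which counts pairs with $u_1=u_2$, is minimized only when $\tau(u)=(u_2,u_1)$ on the remaining two cases: any other choice compatible with the constraints above introduces at least one extra fixed-diagonal contribution, contradicting $(m_j(\tau))_{j=0}^d\le(m_j(\tau_0))_{j=0}^d$. Hence $\tau=\tau_0$. The step I expect to be the main obstacle is verifying the boundary $j=i$ uniformly in the two regimes $d-i\le i$ (when $W_i$ straddles cases 2 and 3) and $d-i>i$ (when $W_i$ falls entirely in case 4); the bookkeeping in the $m_d$ lower bound must accommodate both, and the argument only proceeds cleanly because the symmetry $|W_{i,1}|=|W_{d-i}|$ when $i\ge d/2$ matches the $\supseteq/\subseteq$ duality used above.
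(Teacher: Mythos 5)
Your proposal is correct and follows essentially the same route as the paper, which proves this lemma only by the remark ``similarly'' referring to the preceding lemma: you reproduce that proof's structure (membership of $\tau_0$ in ${\rm Sym}_1^{(1)}$, the $m_d$ lower bound via $|W_j|=|W_{d-j}|$, downward induction on $l$, the specialization-survivor congruence forcing $\tau(u)=(\tfrac d2,\tfrac d2)-u$, and finally $m_0$), with the only genuine modification being that the full layer $W_i=W_{i,0}\sqcup W_{i,1}$ is present, which you correctly identify and handle at the boundary $j=i$.
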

We now prove the following lemma.
\begin{lemma}Assume that $n=2$, $d$ is even, and $p\equiv-1({\rm mod}\ d)$.
If $k=h_{i,1}+\sum_{j=0}^{i-1}h_j$, then
$${\rm SF}_1^{(a)}(k)=\{(W_{i,1}\cup\cup_{j=0}^{i-1}W_j)^a\}.$$\end{lemma}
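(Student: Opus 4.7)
The plan is to combine the preceding lemma, which determined ${\rm Sym}_2^{(1)}(W_{i,1}\cup\cup_{j=0}^{i-1}W_j)=\{\tau_0\}$, with the product formulae on constant $a$-tuples, and then to establish uniqueness by a local perturbation argument. Write $B:=W_{i,1}\cup\cup_{j=0}^{i-1}W_j$ throughout.

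First I would verify $B^a\in{\rm SF}_1^{(a)}(k)$. The map $u\mapsto (\frac{d}{2},\frac{d}{2})-u$ gives a bijection $W_{i,1}\leftrightarrow W_{d-i}$, so $|W_{i,1}|=h_{d-i}$; by the first lemma of this subsection (valid under $p\equiv-1({\rm mod}\ d)$), $h_{d-i}=h_{i,1}$. Hence $|B|=k$, and since $\cup_{j=0}^{i-1}W_j\subseteq B\subseteq\cup_{j=0}^iW_j$, the characterization of ${\rm SF}_0(k)$ from \S4 places $B$ in ${\rm SF}_0(k)$. The product formula $m_j(B^a)=m_j(B)$ combined with the preceding lemma's identification $m_j(B)=m_j(\tau_0)$ then shows that $B^a$ achieves the candidate minimum vector $(m_j(\tau_0))_{j=0}^d$.

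For the reverse inclusion I would argue uniqueness by perturbation. Suppose $A=(A_0,\ldots,A_{a-1})\in{\rm SF}_0(k)^a$ with $A\neq B^a$; write $A_l=\cup_{j=0}^{i-1}W_j\sqcup V_l$ with $V_l\subseteq W_i$ of size $h_{i,1}$, and pick $l_0$ with $V_{l_0}\neq W_{i,1}$, so that $V_{l_0}$ contains some $u^{\ast}\in W_{i,0}$ while $W_{i,1}\setminus V_{l_0}$ is nonempty. For $\tau\in{\rm Sym}_1^{(1)}(A)$, the defining formula expresses $m_j(\tau)=\sum_l\alpha_l m_j(\tau_l)$ as a convex combination with positive weights $\alpha_l=(p-1)p^{l-1}/(p^a-1)$, so since colex order is preserved by sums of vectors and by positive scalings, a strict colex increase at the $l_0$-th factor produces a strict colex increase of the full vector. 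Applying the four-step cascade of the preceding lemma's proof to $\tau_{l_0}:A_{l_0}\to A_{l_0-1}$, the equality cases at $m_d,m_{d-1},\ldots$ force $\tau_{l_0}$ to pair $W_{i,1}\cap V_{l_0}$ bijectively with $W_{d-i}\cap A_{l_0-1}$ and to send the remaining slices $W_j$ into $W_{d-j}$; but since $u^{\ast}\in W_{i,0}\cap V_{l_0}$ has no such partner available, at least one coordinate $m_{j^{\ast}}(\tau_{l_0})$ is strictly larger than the value attained by the analogous restriction of $\tau_0$. This forces $A=B^a$.

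The main obstacle is pinpointing the coordinate $j^{\ast}$ at which the first strict colex increase occurs. The preceding proof is a cascade of equality-case analyses at $m_d,m_{d-1},\ldots,m_0$, each step imposing progressively rigid constraints on the target slices of $\tau$; when $V_{l_0}\neq W_{i,1}$ these constraints cannot all be simultaneously met, but identifying the precise level at which they first fail requires a case split depending on whether $u^{\ast}_1\geq\frac{d}{2}$ or $u^{\ast}_2\geq\frac{d}{2}$, and on where its forced image under $\tau_{l_0}$ lies within $A_{l_0-1}$. The bookkeeping is analogous to—but finer than—that of the preceding proof, and should go through without new techniques.
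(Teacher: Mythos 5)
The key step of your uniqueness argument does not hold as stated. Your claim that $u^{\ast}\in W_{i,0}\cap V_{l_0}$ "has no partner available" is not true at the level of the multiplicity functions: the equality cases of the cascade only force the whole degree-$\frac{i}{d}$ part $V_{l_0}$ of the domain to map onto $W_{d-i}\subseteq A_{l_0-1}$ (and $W_{d-i}$ to map into the degree-$\frac{i}{d}$ part of the codomain), and an element of $W_{i,0}$ can be sent to an element of $W_{d-i}$ exactly as an element of $W_{i,1}$ can, producing the same integer-degree difference and hence the same contributions to $m_d,\dots,m_1$; these functions see only fractional degrees and cannot distinguish $W_{i,0}$ from $W_{i,1}$. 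The genuine obstruction — and the engine of the paper's proof — is the specialization-survival condition ${\rm Poly}_0(pu-\tau_l(u))\neq0$ combined with $p\equiv-1\ ({\rm mod}\ d)$: for a difference of integral degree the surviving sections are supported on $(d,0),(0,d),(\frac{d}{2},\frac{d}{2})$ only, which forces $u+\tau_l(u)\equiv 0$ or $(\frac{d}{2},\frac{d}{2})$ modulo ${\mathbb Z}(d,0)+{\mathbb Z}(0,d)$, and a coordinate count rules out the first case and turns the second into the equality $\tau_l(u)=(\frac{d}{2},\frac{d}{2})-u$. The paper applies this to the elements $u\in W_{d-i}$, which lie in \emph{every} component: their images must exhaust the degree-$\frac{i}{d}$ part of $A_{l-1}$ and must all lie in $W_{i,1}$, so a cardinality count gives $V_{l-1}=W_{i,1}$ for every $l$ directly, with no need to locate a first failing colex level. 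Your proposal never invokes the specialization or this congruence, and the step you defer as "bookkeeping" (pinpointing $j^{\ast}$) is precisely where your setup cannot be completed: for the perturbed $A$ there need be no strict increase detectable by the $m_j$'s alone before the survival condition is brought in.

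There is a second, structural gap: your convex-combination argument needs the per-factor colexicographic bound $(m_j(\tau_l))_{j}\geq(m_j(\tau_0))_{j}$ at \emph{every} factor $l$, not merely strictness at $l_0$; otherwise a putative increase at $l_0$ could be offset by other factors. Establishing that bound amounts to re-running the cascade for bijections between two possibly different members of ${\rm SF}_0(k)$ (which is also what the paper's terse "Hence" covers), and you only propose doing it at $l_0$. A smaller remark: in your first step it is not necessary (nor shown) that $(m_j(\tau_0))_j$ is the global minimum; what the argument actually needs is only that $B^a\in{\rm SF}_0(k)^a$ with $\tau_0$ surviving, so that $(m_j(k))_j\leq(m_j(\tau_0))_j$, together with the nonemptiness of ${\rm SF}_1^{(a)}(k)$, which holds by definition of the minimum. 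I recommend reorganizing along the paper's route: take any $A\in{\rm SF}_1^{(a)}(k)$ and $\tau\in{\rm Sym}_2^{(1)}(A)$, use the colex comparison with $\tau_0$ to show $\tau_l(W_{d-i})$ is the degree-$\frac{i}{d}$ part of $A_{l-1}$, then use the $p\equiv-1$ congruence forced by specialization survival to conclude $\tau_l(u)=(\frac{d}{2},\frac{d}{2})-u$ and hence $V_{l-1}=W_{i,1}$ for all $l$.
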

\begin{proof}
Let $A\in {\rm SF}_1^{(a)}(k)$, and $\tau\in{\rm Sym}_2^{(1)}(A)$.
Then
$$(m_l(\tau))_{l=0}^d\leq (m_l(\tau_0))_{l=0}^d.$$
Hence
$$\tau_j(W_{d-i})=\left\{u\in A_{j-1}\mid \deg u=\frac{i}{d}\right\},\ \forall j=0,1,\cdots,a-1.$$
As
$${\rm Bund}_1(pu-\tau_j(u))\neq\emptyset,\ \forall u\in W_{i,1},\ \forall j=0,1,\cdots,a-1,$$
$$(d,d)-u-\tau_j(u) \equiv0 \left({\rm mod}\ {\mathbb Z}(d,0)+{\mathbb Z}(0,d)\right),$$
or
$$(d,d)-u-\tau_j(u) \equiv (\frac{d}{2},\frac{d}{2})\left({\rm mod}\ {\mathbb Z}(d,0)+{\mathbb Z}(0,d)\right).$$
As the former congruence is impossible, while the latter implies an equality, we conclude that
$$\tau_j(u)=(\frac{d}{2},\frac{d}{2})-u.$$
This implies that
$$\left\{u\in A_{j-1}\mid \deg u=\frac{i}{d}\right\}\subseteq W_{i,1},\ \forall j=0,1,\cdots,a-1.$$
Counting their cardinality, we conclude that
$$\left\{u\in A_{j-1}\mid \deg u=\frac{i}{d}\right\}= W_{i,1},\ \forall j=0,1,\cdots,a-1.$$
That is,
$$A=(W_{i,1}\cup\cup_{j=0}^{i-1}W_j)^a.$$
Therefore
$${\rm SF}_1^{(a)}(k)=\{(W_{i,1}\cup\cup_{j=0}^{i-1}W_j)^a\}.$$
The lemma is proved.
\end{proof}
\subsection{Nonvanishing of Twisted Hasse Polynomial}
\paragraph{}
 In this subsection, we prove the following theorem.
\begin{theorem}Assume that $n=2$, $d$ is even, and $p\equiv-1 ({\rm mod}\ d)$. If $p>d^3$ and $a\geq2$, then
there exists
$$f(x)\in{\mathbb F}_{p^a}[x_1,x_2]_{(d)}$$
whose leading form is
$$a_{d,0}x_1^d+a_{\frac{d}{2},\frac{d}{2}}x_1^{\frac{d}{2}}x_2^{\frac{d}{2}}+a_{0,d}x_2^d$$
such that
$$a_{d,0}a_{0,d}(a_{\frac{d}{2},\frac{d}{2}}^2-4a_{d,0}a_{0,d})\neq0,$$ and, for each Frobenius vertex $k$,
$${\rm TH}_0^{(a)}(k)(f)\neq0.$$\end{theorem}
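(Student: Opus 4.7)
The plan is to use the structural lemmas of the previous subsection to reduce the non-vanishing of ${\rm TH}_0^{(a)}(k)$ to the non-vanishing of a single explicit product, verify that product directly by exhibiting sections, and then package everything into a Schwartz--Zippel/density argument to produce a single $f$ that witnesses all Frobenius vertices at once.

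For each Frobenius vertex $k$, the two lemmas at the end of \S7.3 identify ${\rm SF}_1^{(a)}(k)$ and ${\rm Sym}_2^{(1)}(A^a)$ as singletons $\{A^a\}$ and $\{\tau_0^{\otimes a}\}$, where $A = W_{i,1}\cup\bigcup_{j<i}W_j$ (with $A = \bigcup_{j\leq i}W_j$ in the Hodge case) and $\tau_0$ is the explicit twisted permutation of the four cases. Consequently
$${\rm TH}_1^{(a)}(k) \;=\; \pm\prod_{l=1}^{a}\prod_{u\in A}{\rm Poly}_1\bigl(pu-\tau_0(u)\bigr)^{p^{l-1}},$$
and since ${\rm TH}_1^{(a)}(k)$ is the minimal form of ${\rm TH}_0^{(a)}(k)$ under the polarization partial order, it suffices to prove that each factor ${\rm Poly}_1(pu-\tau_0(u))$ is a nonzero polynomial.

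For each $u\in A$, set $v := pu-\tau_0(u)$. Because $p\equiv -1\pmod d$ the vector $(p+1)u$ has components divisible by $d$, so $v$ is an explicit positive lattice point and a short calculation gives $\{\deg v\}=\{pj/d\}$ with $j=u_1+u_2$. I would then exhibit a single section $s\in{\rm Bund}_1(n,d)(v)$: decompose $v$ as a nonnegative integer combination of the three leading vertices $(d,0),(0,d),(d/2,d/2)$ together with $\lceil\{\deg v\}\rceil$ interior lattice points of the correct residual degree. For $p>d^3$ such a decomposition exists and uses multiplicities strictly less than $p$, so the Artin--Hasse coefficients $\overline{\lambda}_{s(w)}$ are all nonzero, giving ${\rm Mono}(s)\neq 0$ and hence ${\rm Poly}_1(v)\neq 0$. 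This yields ${\rm TH}_1^{(a)}(k)\neq 0$ and, via polarization, ${\rm TH}_0^{(a)}(k)\neq 0$ as a polynomial in the coefficients $a_w$. To obtain one $f$ simultaneously for every Frobenius vertex, form
$$\Phi \;:=\; a_{d,0}\,a_{0,d}\bigl(a_{d/2,d/2}^{2}-4a_{d,0}a_{0,d}\bigr)\prod_{k}{\rm TH}_0^{(a)}(k),$$
with $k$ ranging over Frobenius vertices $\leq\binom{d}{2}$. The last lemma of \S5 combined with $p>d^3\geq d^{n+1}$ bounds the degree of each ${\rm TH}_0^{(a)}(k)$ in the non-vertex variables by less than $p^a$. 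Choose vertex values in $\mathbb{F}_{p^a}$ generic enough to keep each ${\rm TH}_0^{(a)}(k)$ nonzero as a polynomial in the remaining variables while satisfying the discriminant condition; then invoke Schwartz--Zippel (using $|\mathbb{F}_{p^a}|=p^a\geq p^2>d^6$ when $a\geq 2$) to select non-vertex coefficients realizing $\Phi(f)\neq 0$.

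The principal obstacle is the explicit section construction in the third paragraph. The equality $\deg_d(s)=\lceil\{\deg v\}\rceil$ is sharp, so the decomposition of $v$ must use the \emph{minimum} possible number of interior lattice points, it must avoid the support ${\rm SV}$, and it must work uniformly across all four cases of $\tau_0$. The hypothesis $p\equiv -1\pmod d$ enters precisely here: it forces $p+1$ to be divisible by $d$, which is what makes the integer part of $\deg v$ assemble into a genuine lattice-point combination of $(d,0)$, $(0,d)$, and $(d/2,d/2)$ with the parity constraints imposed by ${\rm Bund}_1$.
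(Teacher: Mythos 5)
Your outline follows the paper up through the reduction to the singleton lemmas of \S 7.3, and your observation that
$${\rm TH}_1^{(a)}(k) \;=\; \pm\prod_{l=1}^{a}\prod_{u\in A}{\rm Poly}_1\bigl(pu-\tau_0(u)\bigr)^{p^{l-1}}$$
is exactly where the paper starts its computation. But there is a genuine gap in the step where you say ``choose vertex values in $\mathbb{F}_{p^a}$ generic enough to keep each ${\rm TH}_0^{(a)}(k)$ nonzero as a polynomial in the remaining variables.'' Exhibiting a single section $s$ with ${\rm Mono}(s)\neq 0$ shows ${\rm Poly}_1(pu-\tau_0(u))\neq 0$ as a polynomial in \emph{all} coefficient variables jointly, but it says nothing about the degree of its coefficients when viewed as a polynomial over ${\mathbb F}_p[a_{d,0},a_{d/2,d/2},a_{0,d}]$ in the non-vertex variables. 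A priori those coefficients have degree growing like $p^a$ (the full product carries $p^{l-1}$-th powers), which is not smaller than $|\mathbb{F}_{p^a}|$, so Schwartz--Zippel cannot be applied to them directly, and ``generic enough'' has no content without a separate bounded-degree polynomial in the three vertex variables to avoid the zero set of.

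What the paper actually proves, and what your outline omits, is the explicit factorization
$${\rm Poly}_1\bigl(pu-(u_2,u_1)\bigr) \;=\; {\rm Poly}_1\bigl(\tfrac{d-2j}{2}(1,1)\bigr)\cdot {\rm Poly}_1\bigl((p+1)u-(\tfrac{d}{2},\tfrac{d}{2})\bigr),$$
and similarly ${\rm Poly}_1(pu-(u_2,u_1)) = {\rm Poly}_1((d-j,d-j)){\rm Poly}_1((p+1)u-(d,d))$ in the other off-diagonal case, which splits each factor into a \emph{single non-vertex monomial} times a polynomial purely in $a_{d,0},a_{d/2,d/2},a_{0,d}$. (The remaining two cases of $\tau_0$ already give $pu-\tau_0(u)=(p+1)u-(\tfrac{d}{2},\tfrac{d}{2})$, which is purely vertex.) Collecting these yields ${\rm TH}_1^{(1)}(k)={\rm Fac}(k){\rm Int}(k)$ with ${\rm Int}(k)$ a nonzero monomial in the non-vertex variables and ${\rm Fac}(k)$ a nonzero polynomial of degree $<(d-1)^2p$ in the three vertex variables, and then the tensor structure gives ${\rm TH}_1^{(a)}(k)=\prod_l({\rm Fac}(k){\rm Int}(k))^{p^{l-1}}$. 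It is this bounded-degree ${\rm Fac}(k)$ -- not ${\rm TH}_0^{(a)}(k)$ itself, whose vertex-variable degree grows with $a$ -- that is fed into Schwartz--Zippel alongside the discriminant, using $p^a\ge p^2$. Your ``exhibit one section'' step proves nonvanishing of the whole factor but does not isolate such an $a$-independent facial factor, so the vertex-choice step does not go through as written. The decomposition you sketch in your third paragraph is in fact the section underlying ${\rm Int}(k)\cdot{\rm Fac}(k)$ -- you already wrote $v = w_0 + ((p+1)u-(\tfrac{d}{2},\tfrac{d}{2}))$ implicitly -- so upgrading the existence of that section to the uniqueness of the interior point $w_0$ (forced by $p\equiv -1\pmod d$) would close the gap and recover the paper's proof.
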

It is easy to see that the above theorem follows from the following lemma.
\begin{lemma}Assume that $n=2$, $d$ is even, and $p\equiv-1 ({\rm mod}\ d)$. If $k\leq {d\choose 2}$ is a Frobenius vertex, then there exist a nonzero monomial
$${\rm Int}(k)\in {\mathbb F}_p[a_w\mid \deg w<1],$$
and a nonzero polynomial
$${\rm Fac}(k)\in{\mathbb F}_p[a_{d,0},a_{\frac{d}{2},\frac{d}{2}},a_{0,d}]$$
of degree less than $(d-1)^2p$,
such that
$${\rm TH}_1^{(a)}(k)=\prod_{l=1}^a\left({\rm Fac}(k){\rm Int}(k)\right)^{p^{l-1}},\ a=1,2,\cdots.$$\end{lemma}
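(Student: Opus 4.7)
The plan is to invoke the singleton characterizations from the preceding subsection to reduce ${\rm TH}_1^{(a)}(k)$ to a single product over $A$, and then factor each factor ${\rm Poly}_1(pu-\tau_0(u))$ according to the four clauses of $\tau_0$. The preceding two lemmas give ${\rm Sym}_2^{(1)}(A)=\{\tau_0\}$ and ${\rm SF}_1^{(a)}(k)=\{A^a\}$, with $A$ equal to either $W_{i,1}\cup\bigcup_{j<i}W_j$ or $\bigcup_{j\leq i}W_j$, so the multiplicativity ${\rm Poly}_1(A^a)=\prod_{l=1}^{a}{\rm Poly}_1(A)^{p^{l-1}}$ reduces everything to exhibiting a factorization ${\rm Poly}_1(A)=\pm\prod_{u\in A}{\rm Poly}_1(pu-\tau_0(u))={\rm Int}(k)\cdot{\rm Fac}(k)$ for a single $A$.

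Set $v:=pu-\tau_0(u)$. For clauses $1,2$ we have $\tau_0(u)=(\tfrac{d}{2},\tfrac{d}{2})-u$ and $v=(p+1)u-(\tfrac{d}{2},\tfrac{d}{2})$; since $p\equiv-1\pmod d$, $\deg v$ is an integer, so the defining condition $\deg_d(s)=\lceil\{\deg v\}\rceil$ of ${\rm Bund}_1(v)$ forces $\deg_d(s)=0$ and ${\rm Poly}_1(v)\in\mathbb{F}_p[a_{d,0},a_{\frac{d}{2},\frac{d}{2}},a_{0,d}]$. For clauses $3,4$ we have $\tau_0(u)=(u_2,u_1)$, $v=(pu_1-u_2,pu_2-u_1)$, and $\{\deg v\}\in(0,1)$, so exactly one interior point $w$ is used; writing $v-w=a(d,0)+b(\tfrac{d}{2},\tfrac{d}{2})+c(0,d)$ forces $v_1-w_1\equiv v_2-w_2\pmod d$. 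But $v_1-v_2=(p+1)(u_1-u_2)\equiv 0\pmod d$, so $w_1\equiv w_2\pmod d$, which combined with $1\leq w_1,w_2\leq d-1$ collapses to $w_1=w_2$. Together with the fixed sum $w_1+w_2=d\{\deg v\}$ this pins down $w$ uniquely as $(\tfrac{d}{2}-j,\tfrac{d}{2}-j)$ in Case $4$ and $(d-j,d-j)$ in Case $3$, where $j=d\deg u$.

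Consequently each ${\rm Poly}_1(pu-\tau_0(u))$ is a single interior monomial $a_w$ (with Artin--Hasse coefficient $\overline{\lambda}_1\neq 0$) times a polynomial in $a_{d,0},a_{\frac{d}{2},\frac{d}{2}},a_{0,d}$ alone (the interior factor being $1$ in Cases $1,2$), and multiplying over $u\in A$ gives the desired ${\rm Poly}_1(A)=\pm\,{\rm Int}(k)\cdot{\rm Fac}(k)$. Nonvanishing of ${\rm Fac}(k)$ is obtained by exhibiting one valid corner decomposition of $v-w$ per $u$: take $b=1$ in Cases $1,2,4$ and $b=0$ in Case $3$, whence $a=qu_1-\lceil\tfrac{b+1}{2}\rceil\geq 0$ and similarly $c\geq 0$ because $u_1,u_2\geq 1$ and $q:=(p+1)/d\geq 2$; the Artin--Hasse coefficients $\overline{\lambda}_a,\overline{\lambda}_b,\overline{\lambda}_c$ appearing all have indices bounded by $\lfloor\deg v_u\rfloor<p$ and are therefore nonzero. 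Finally, $\deg{\rm Fac}(k)=\sum_u\lfloor\deg v_u\rfloor\leq|A|\cdot p\leq\binom{d}{2}p<(d-1)^{2}p$.

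The crux is the uniqueness of $w$ in Cases $3$ and $4$: it is the collapse $w_1\equiv w_2\pmod d\Rightarrow w_1=w_2$, enabled precisely by $p\equiv-1\pmod d$ together with $d$ being even, that causes ${\rm Poly}_1(pu-\tau_0(u))$ to factor as a monomial in interior variables times a corner polynomial, rather than as an honest sum of such products. Once this collapse is in hand, the remaining ingredients — existence of a witness corner decomposition, the degree bound $\deg{\rm Fac}(k)<(d-1)^{2}p$, and the fact that all Artin--Hasse indices stay below $p$ (where the hypothesis $p>d^{3}$ is used very slackly) — are routine bookkeeping, and the theorem preceding the lemma (and thereby Theorem \ref{main-theorem-exact-bound}) follows.
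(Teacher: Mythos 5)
Your proposal is correct and follows essentially the same route as the paper: reduce via the singleton characterizations of ${\rm Sym}_2^{(1)}(A)$ and ${\rm SF}_1^{(a)}(k)$ to a single $A$ and a single $\tau_0$, and then split each factor ${\rm Poly}_1(pu-\tau_0(u))$ into an interior monomial times a corner polynomial. What the paper does is simply assert the factorizations
\[
{\rm Poly}_1(pu-(u_2,u_1))={\rm Poly}_1\!\left(\tfrac{d-2j}{2}(1,1)\right){\rm Poly}_1\!\left((p+1)u-(\tfrac{d}{2},\tfrac{d}{2})\right)
\]
and
\[
{\rm Poly}_1(pu-(u_2,u_1))={\rm Poly}_1\!\left((d-j,d-j)\right){\rm Poly}_1\!\left((p+1)u-(d,d)\right),
\]
whereas you supply the justification: since every section in ${\rm Bund}_1(v)$ carries exactly one interior point $w$ with multiplicity one, and $v-w\in{\mathbb Z}(d,0)+{\mathbb Z}(\tfrac{d}{2},\tfrac{d}{2})+{\mathbb Z}(0,d)$ forces $w_1\equiv w_2\pmod d$ while $v_1-v_2=(p+1)(u_1-u_2)\equiv 0\pmod d$, the point $w$ is pinned to the diagonal $w_1=w_2=\tfrac{d}{2}\{\deg v\}$. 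That is precisely the collapse that the specialization/polarization setup was designed to produce, and your observation that it hinges on $p\equiv-1$ and $d$ even is the right one. Your explicit witness $(a,b,c)$ with $b\in\{0,1\}$ for nonvanishing of ${\rm Fac}(k)$ is somewhat more careful than the paper's one-line remark that distinct sections give distinct exponents (that remark needs, as you notice, the existence of at least one section with all Artin--Hasse indices below $p$). The degree bookkeeping is the same, though be aware that the chain $\sum_u\lfloor\deg v_u\rfloor\leq|A|\,p\leq\binom{d}{2}p<(d-1)^2p$ is not literally correct at $d=2$; the strict inequality should come from $\lfloor\deg v_u\rfloor<p$ pointwise, giving $\sum_u\lfloor\deg v_u\rfloor<|A|\,p\leq(d-1)^2p$.
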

\begin{proof}
For simplicity, we assume that
$k=\sum_{j=0}^nh_j$. The other case can be proved similarly.
Then\begin{eqnarray*}
      {\rm TH}_1^{(1)}(k) &=& \prod_{j\leq i<d-j}\prod_{u\in W_j}{\rm Poly}_1(pu-(u_2,u_1)) \\
       &\times& \prod_{j\leq d-j\leq i}\prod_{u\in W_j}{\rm Poly}_1((p+1)u-(\frac{d}{2},\frac{d}{2})) \\
       &\times& \prod_{d-j\leq j\leq i}\prod_{u\in W_{j,1}}{\rm Poly}_1((p+1)u-(\frac{d}{2},\frac{d}{2})) \\
&\times&\prod_{d-j<j\leq i}\prod_{u\in W_{j,0}}{\rm Poly}_1(pu-(u_2,u_1)).
    \end{eqnarray*}
Notice that, if $u\in W_j$ with $j\leq i<d-j$, then
$${\rm Poly}_1(pu-(u_2,u_1))={\rm Poly}_1(\frac{d-2j}{2}(1,1)){\rm Poly}_1((p+1)u-(\frac{d}{2},\frac{d}{2})).$$
Similarly,
if $u\in W_{j,0}$ with $d-j< j\leq i$, then
$${\rm Poly}_1(pu-(u_2,u_1))={\rm Poly}_1((d-j,d-j)){\rm Poly}_1((p+1)u-(d,d)).$$
Let
\begin{eqnarray*}
      {\rm Int}(k) &=& \prod_{j\leq i<d-j}\prod_{u\in W_j}{\rm Poly}_1(\frac{d-2j}{2}(1,1)) \\
&\times&\prod_{d-j<j\leq i}\prod_{u\in W_{j,0}}{\rm Poly}_1((d-j,d-j)).
    \end{eqnarray*}
Then $${\rm Int}(k)\in{\mathbb F}_p[a_w\mid \deg w<1]$$
is a nonzero monomial.
Let\begin{eqnarray*}
      {\rm Fac}(k) &=& \prod_{j\leq i<d-j}\prod_{u\in W_j}{\rm Poly}_1((p+1)u-(\frac{d}{2},\frac{d}{2})) \\
       &\times& \prod_{j\leq d-j\leq i}\prod_{u\in W_j}{\rm Poly}_1((p+1)u-(\frac{d}{2},\frac{d}{2})) \\
       &\times& \prod_{d-j\leq j\leq i}\prod_{u\in W_{j,1}}{\rm Poly}_1((p+1)u-(\frac{d}{2},\frac{d}{2})) \\
&\times&\prod_{d-j<j\leq i}\prod_{u\in W_{j,0}}{\rm Poly}_1((p+1)u-(d,d)).
    \end{eqnarray*}
Then
$${\rm Fac}(k)\in{\mathbb F}_p[a_{d,0},a_{\frac{d}{2},\frac{d}{2}},a_{0,d}],$$
and
$${\rm TH}_1^{(a)}=\prod_{l=1}^a({\rm Fac}(k){\rm Int}(k))^{p^{l-1}}.$$
Each factor of ${\rm Fac}(k)$, being a nonzero combination of monomials of different exponents, is nonzero. The degree of ${\rm Fac}(k)$ is easily seen to be bounded by
$(d-1)^2p$.
 \end{proof}

\section{Table of Nations}
In this section we list the notions of present paper.
\begin{itemize}
\item ${\rm Bund}(n,d)$  is the integral bundle on ${\rm Simp}(n,d)$.
\item ${\rm Bund}(n,d)(u)$ is the set of sections of minimal degree which represents $u$.
\item ${\rm Bund}_0(n,d)(u)$ is the subset of ${\rm Bund}(n,d)(u)$ consisting of sections which vanishes at specialized variables.
\item ${\rm Bund}_1(n,d)(u)$ is the subset of ${\rm Bund}_0(n,d)(u)$ consisting of sections of minimal interior degree.
\item $\deg(A)$ is the degree of  a subset of fundamental lattice points.
\item$\deg (s)$ is the degree of integral section $s$.
\item $\deg(u)$ is the degree of lattice point $u$.
\item ${\rm Disc}(n,d)$ is the discriminant on the set of homogenous polynomials in $\overline{{\mathbb F}}_p[x_1,\cdots, x_n]_{(d)}$.
\item $E(x)$ is Artin-Hasse exponential.
\item $E_f(x)$ is a power series in $n$-variables building from $E$ and $f$.
\item $\widehat{E}_f(x)$ is an infinite product constructed from $E_f(x)$.
\item $\overline{{\mathbb F}}_p[x_1,\cdots,x_n]_{(d)}$ is the set of degree-$d$ polynomials in $\overline{{\mathbb F}}_p[x_1,\cdots,x_n]$.
\item ${\rm Fac}(k)$ is the facial factor of ${\rm TH}_1^{(1)}(k)$.
\item ${\rm FP}(n,d;p)$ is Frobenius polygon of ${\rm Simp}(n,d)$.
\item ${\rm FP}^{(i)}(n,d;p)$'s are fitted Frobenius polygons.
\item $H_0(f)$ is the $p$-adic homology space of $f$.
\item $h_j$'s are Hodge numbers of ${\rm Simp}(n,d)$.
\item $h_{j,\epsilon}$'s are Frobenius numbers of ${\rm Simp}(n,d)$.
\item $H_j$'s are arithmetic  Hodge sums of ${\rm Simp}(n,d)$.
\item ${\rm Hass}(n,d;p)$ is Hasse polynomial on $\overline{{\mathbb F}}_p[x_1,x_2,\cdots, x_n]_{(d)}$.
  \item ${\rm HP}(n,d)$ is Hodge polygon of ${\rm Simp}(n,d)$.
\item ${\rm Int}(k)$ is the interior factor of ${\rm TH}_1^{(1)}(k)$.
\item ${\rm Iso}(A,B)$ is the set of isomorphisms from $A$ to $B$.
\item ${\rm Iso}_{\deg}(A,B)$ is the set of degree-preserving isomorphisms from $A$ to $B$.
\item $L_{f,q}(t)$ is the $L$-function associated to $(f,q)$.
\item $m(\alpha^{-1})$ is the order of $\alpha^{-1}$ as a zero of $L_{f,q}(t)^{(-1)^{n-1}}$.
\item $m_j(u)$'s are multiplicities of a lattice point.
\item $m_j(\tau)$'s are multiplicities of a twisted permutation.
\item $m_j(A)$'s are multiplicities of a tuple of subsets of fundamental lattice points.
\item $m_j(k)$'s are multiplicities at an integer $k$.
\item ${\rm Mono}(s)$ is the monomial representation of a section involving Artin-Hasse coefficients.
\item ${\rm NP}(f)$ is Newton polygon of $f$.
\item ${\rm GNP}(n,d;p)$ is the generic Newton polygon of $\overline{{\mathbb F}}_p[x_1,\cdots,x_n]_{(d)}$.
\item ${\rm Poly}(A)$ is the polynomial of a tuple of subsets of fundamental lattice points.
\item ${\rm Poly}_0(A)$ is the specialization of ${\rm Poly}(A)$.
\item ${\rm Poly}_1(A)$ is the minimal form of ${\rm Poly}_0(A)$.
\item ${\rm Poly}(u)$ is the polynomial of a lattice point.
\item ${\rm Poly}_0(u)$ is the specialization of ${\rm Poly}(u)$.
\item ${\rm Poly}_1(u)$ is the minimal form of ${\rm Poly}_0(u)$.
\item ${\rm PP}(n,d;p)$ is the premium polygon of ${\rm Simp}(n,d)$.
\item${\rm Prem}(A)$ is Frobenius premium of a tuple of subsets of fundamental lattice points.
\item${\rm Prem}(k)$ is Frobenius premium at an integer.
\item${\rm Prem}(\tau)$ is Frobenius premium of a twisted permutation.
\item $S_{f,q}(k)$'s are exponential sums associated to $(f,q)$.
\item ${\rm SF}(k)$ is the set of subsets of $(0,d)^n\cap{\mathbb Z}^n$ of cardinality $k$.
\item ${\rm SF}_0(k)$ is the subset of ${\rm SF}(k)$ consisting of elements of minimal degree.
\item ${\rm SF}_1^{(a)}(k)$ is the subset of ${\rm SF}_0(k)^a$ consisting of elements of minimal multiplicities.
  \item ${\rm Simp}(n,d)$ is the $d$-multiple of the standard unit simplex in ${\mathbb R}^n$.
\item ${\rm Sym}^{(1)}(A)$ is the set of twisted permutation of $A$.
\item ${\rm Sym}_0^{(1)}(A)$ is the subset of ${\rm Sym}^{(1)}(A)$ consisting of elements of minimal premium.
\item ${\rm Sym}_1^{(1)}(A)$ is the subset of ${\rm Sym}_0^{(1)}(A)$ consisting of elements which makes a contribution after specialization.
\item ${\rm Sym}_2^{(1)}(A)$ is the subset of ${\rm Sym}_1^{(1)}(A)$ consisting of elements of minimal multiplicities.
\item ${\rm TH}^{(a)}(k)$ is the twisted Hasse polynomial at $k$.
\item ${\rm TH}_0^{(a)}(k)$ is the specialization of ${\rm TH}^{(a)}(k)$.
\item ${\rm TH}_1^{(a)}(k)$ is the minimal form of ${\rm TH}_1^{(a)}(k)$.
\item ${\rm vec}(s)$ the lattice point represented by $s$.
\item $W_j$ is the set of fundamental lattice points of degree $\frac{j}{d}$.
\item $W_{j,1}$ is the set of median fundamental lattice points of degree $\frac{j}{d}$.
\item $W_{j,0}$ is the set of marginal fundamental lattice points of degree $\frac{j}{d}$.
\item $\delta_{ij}$'s are Knonecker's $\delta$-symbols.
\item $\lambda_j$'s are coefficients of Artin-Hasse exponential.
\item $\overline{\lambda}_j$'s are characteristic-$p$ coefficients of Artin-Hasse exponential.
\item $\partial$ is the boundary operator obtaining $H_0(f)$ aside from being partial derivative.
\item $\phi$ is Frobenius action on power series.
\item $\phi_f$ is Dwork operator of $f$.
\item $\pi$ is the Artin-Hasse uniformizer of $\zeta_p-1$.
\item $\sigma_j$'s are conjugates of Frobenius  on ${\mathbb Z}/(d)$.
\item $\sigma$ is Frobenius element of ${\rm Gal}(\overline{{\mathbb F}}_p/{\mathbb F}_p)$.
\item $\varpi_{j,\epsilon}$'s are Frobenius slopes of ${\rm Simp}(n,d)$.
\item $\varpi_j^{(i)}$'s are fitted Frobenius slopes.
\item $\zeta_p$ is a $p$-th root of unity.
\item $\lceil\cdot\rceil$ is the least integer no less than a rational number.
\item $[\cdot]$ is the greatest integer no more than a rational number aside from being interval symbol.
\item $\{\cdot\}$ is the fractional part of a rational number aside from being set symbol.
\end{itemize}

\end{document}